\documentclass[10pt,a4paper]{article}

\usepackage[utf8]{inputenc}
\usepackage[T1]{fontenc}
\usepackage{lmodern}
\usepackage{microtype}          
\usepackage[numbers,sort&compress]{natbib}

\usepackage{geometry}
\usepackage{fancyhdr}
\usepackage{amsmath,amssymb,amsfonts,amsthm,mathtools}
\usepackage{mathrsfs}
\usepackage{bm}
\usepackage{enumitem}
\usepackage{hyperref}
\usepackage[nameinlink,capitalize]{cleveref}
\usepackage{esint}

\usepackage{xcolor}

\definecolor{refblue}{RGB}{0, 51, 153}      
\definecolor{citered}{RGB}{178, 34, 34}     
\definecolor{tocmuted}{RGB}{102, 51, 51}    

\hypersetup{
	colorlinks=true,
	linkcolor=blue!80!black,      
	citecolor=red!80!black,       
	urlcolor=blue,        
	filecolor=blue,       
	plainpages=false,
	pdfstartview=FitH,
	bookmarksopen=true,
	bookmarksnumbered=true,
	breaklinks=true
}

\usepackage{enumitem}
\setlist{noitemsep, topsep=4pt}

\usepackage{mathtools}
\usepackage{etoolbox}
\apptocmd{\thebibliography}{\color{refblue}}{}{}

\crefdefaultlabelformat{#2\color{tocmuted}#1#3}

\numberwithin{equation}{section}
\numberwithin{figure}{section}
\numberwithin{table}{section}
\newtheorem{theorem}{Theorem}[section]
\newtheorem{lemma}[theorem]{Lemma}
\newtheorem{proposition}[theorem]{Proposition}
\newtheorem{corollary}[theorem]{Corollary}
\newtheorem{assumption}[theorem]{Assumption}
\newtheorem{definition}[theorem]{Definition}
\newtheorem{remark}[theorem]{Remark}

\newcommand{\R}{\mathbb R}

\newcommand{\eps}{\varepsilon}
\newcommand{\Om}{\Omega}
\newcommand{\Omeps}{\Omega^\eps_p}

\newcommand{\Yp}{Y_p}

\newcommand{\norm}[2]{\left\|#1\right\|_{#2}}

\newcommand{\phie}{\phi^\eps}
\newcommand{\mue}{\mu^\eps}
\newcommand{\phiz}{\phi}
\newcommand{\muz}{\mu}

\def\e{{\varepsilon}}
\def\Te{{\mathcal{T}_\e}}
\def\X{{\times}}
\def\md{{\mathrm{d}}}
\def\bn{{\mathbf{n}}}
\def\Kc{{\mathcal{K}}}
\def\O{{\Omega}}

\def\GD{{\mathbf{D}}}
\def\B{{\mathbb{B}}}
\def\per{{\mathrm{per}}}
\def\Qc{{\mathcal{Q}}}

\def\div{{\mathrm{div}}}
\def\GO{\mathcal{O}}

\def\be{{\mathbf{e}}}
\title{Quantitative Homogenization of a Cahn--Hilliard System with Source Term in Periodically Perforated Domains}

\author{Amartya Chakrabortty\thanks{
		Processes and Materials, Fraunhofer Institute for Industrial Mathematics ITWM,
		Fraunhofer-Platz 1, 67663 Kaiserslautern, Germany.
		Email: \href{mailto:amartya.chakrabortty@gmail.com}{amartya.chakrabortty@gmail.com}.
		ORCID: \href{https://orcid.org/0009-0008-1353-5246}{0009-0008-1353-5246}.
}}

\date{\today}

\begin{document}

\maketitle

\begin{abstract}
	We study qualitative and quantitative homogenization for a Cahn--Hilliard system 
	with a nonconservative  source term in a periodically perforated domain. 
Using the periodic unfolding method, we derive uniform energy estimates and 
prove convergence to a homogenized Cahn--Hilliard system whose effective 
diffusion tensor is characterized by scalar Neumann cell problems on the 
pore cell.
	For the 
	quantitative analysis, we construct first-order corrector approximations by means 
	of a scale-splitting operator, so that the cell correctors are only required to 
	belong to $H^1_\per(Y_p)$. Under $H^2$-regularity of the homogenized solution and 
	well-prepared initial data, we obtain an order \(\varepsilon^{1/2}\) corrector estimate: the corrected
	order-parameter error is controlled in \(L^2(0,T;H^1(\Omega_p^\varepsilon))\),
	while the uncorrected order parameter is controlled in \(L^2(0,T;L^2(\Omega_p^\varepsilon))\). This improves the rate 
	$\varepsilon^{1/4}$ previously established for fourth-order phase-field equations 
	in perforated media, and matches the natural rate for second-order elliptic 
	problems in perforated domains. The rate reflects the boundary layer caused by 
	incomplete cells near $\partial\Omega$ and improves to order $\varepsilon$ on the flat torus $\mathbb{T}^d$.
\end{abstract}

\noindent\textbf{Keywords:}
Cahn--Hilliard equation, source term, periodic homogenization, perforated domain, corrector estimates, convergence rates.

\smallskip

\noindent\textbf{MSC 2020:}
35B27, 35K35, 35Q92, 76S05.

\section{Introduction}
\label{sec:intro}

The Cahn--Hilliard equation, introduced by Cahn and Hilliard~\cite{CahnHilliard1958}
to model spinodal decomposition in binary alloys, is a fundamental diffuse-interface
model for phase separation and interfacial dynamics. Its applications range from
phase transitions in materials science~\cite{Cahn1961} and two-phase
flows~\cite{HohenbergHalperin1977} to tumor growth~\cite{OdenEtAl2010}, image
inpainting~\cite{BertozziEtAl2007}. In homogeneous domains the mathematical theory is
well-developed: the Ginzburg--Landau free energy is a Lyapunov functional, the
system is a gradient flow in $H^{-1}$, and well-posedness is classical for both
regular potentials~\cite{Elliott1989,ElliottZheng1986,NicolaenkoScheurerTemam1989}
and singular ones~\cite{AbelsWilke2007,BloweyElliott1991}; see the
monograph~\cite{Miranville2019} and the survey~\cite{Wu2021}.

In many physically relevant situations, however, phase separation occurs in a
\textbf{porous medium}: a material with a periodic array of solid inclusions, the
fluid phases occupying the connected pore space. When the ratio $\varepsilon > 0$
between the pore scale and the macroscopic domain size is small, resolving the
microscopic geometry directly is computationally prohibitive. Homogenization then
provides effective macroscopic equations, and reveals how pore geometry enters
the macroscopic dynamics through an effective diffusion tensor determined by cell
problems on the reference pore cell. A further feature of practical importance is
the presence of \textbf{nonconservative source terms}: in porous electrodes,
reactive porous media, and biological tissues undergoing phase transitions, the
order parameter is subject to a volumetric source or sink rather than being
conserved. This paper addresses both the qualitative and quantitative homogenization
of a Cahn--Hilliard system with a monotone nonconservative source term in a
periodically perforated domain.

\smallskip
Let $0 < \varepsilon < 1$ denote the ratio between the pore scale and the macroscopic
domain size, and let $\Omega_p^\varepsilon \subset \Omega \subset \mathbb{R}^d$,
$d \in \{2,3\}$, be the periodically perforated pore domain obtained by removing a
periodic array of solid inclusions from a bounded Lipschitz domain $\Omega$; see
Section~\ref{sec:geometry} for the precise construction. The microscopic unknowns
are the order parameter $\phi_\varepsilon$ and the chemical potential $\mu_\varepsilon$,
solving
\begin{equation*}
	\begin{cases}
		\partial_t \phi_\varepsilon - \Delta \mu_\varepsilon + G(\phi_\varepsilon) = 0
		& \text{in } (0,T) \times \Omega_p^\varepsilon,\\[1mm]
		\mu_\varepsilon = -\Delta \phi_\varepsilon + F'(\phi_\varepsilon)
		& \text{in } (0,T) \times \Omega_p^\varepsilon,\\[1mm]
		\nabla \phi_\varepsilon \cdot \mathbf{n}_\varepsilon
		= \nabla \mu_\varepsilon \cdot \mathbf{n}_\varepsilon = 0
		& \text{on } (0,T) \times \partial\Omega_p^\varepsilon,\\[1mm]
		\phi_\varepsilon(0,\cdot) = \phi_\varepsilon^0
		& \text{in } \Omega_p^\varepsilon.
	\end{cases}
\end{equation*}
Here $F$ is a general potential satisfying polynomial growth, coercivity, and
dissipativity conditions (Assumption~\ref{ass:F}), with the classical double-well
$F(s) = \tfrac{1}{4}(s^2-1)^2$ as the model example, and $G$ is a monotone
globally Lipschitz source term with $G(0) = 0$ and $G' \geq c_G > 0$
(Assumption~\ref{ass:G}). The homogeneous Neumann conditions are imposed on the
full boundary $\partial\Omega_p^\varepsilon = \partial\Omega \cup \Gamma_s^\varepsilon$,
encoding no-flux conditions through both the outer boundary $\partial\Omega$ and
the pore walls $\Gamma_s^\varepsilon$.

The first main result, Theorem~\ref{thm:qual-hom}, establishes that as
$\varepsilon \to 0$ the solutions $(\phi_\varepsilon, \mu_\varepsilon)$ converge,
up to subsequences, to a pair $(\phi, \mu)$ solving the homogenized Cahn--Hilliard
system
\begin{equation*}
	\begin{cases}
		\partial_t \phi - \operatorname{div}(\B^{\mathrm{hom}}\nabla\mu)
		+ G(\phi) = 0
		& \text{in } (0,T)\times\Omega,\\[1mm]
		\mu = -\operatorname{div}(\B^{\mathrm{hom}}\nabla\phi) + F'(\phi)
		& \text{in } (0,T)\times\Omega,\\[1mm]
		\B^{\mathrm{hom}}\nabla\phi \cdot \mathbf{n}
		= \B^{\mathrm{hom}}\nabla\mu \cdot \mathbf{n} = 0
		& \text{on } (0,T)\times\partial\Omega,\\
		\phi(0,\cdot) = \phi^0
		& \text{in } \Omega.
	\end{cases}
\end{equation*}
The effective diffusion tensor $\B^{\mathrm{hom}}$ is symmetric and uniformly
elliptic, determined by scalar Neumann cell problems on the pore cell $Y_p$;
see~\eqref{eq:Bhom}. The proof uses the periodic unfolding method adapted to
perforated domains~\cite{CDG02,CDG08,CDG18,CDG08+,CD06}, uniform energy estimates,
and compactness. 
A structural observation driving the analysis is that $G$, being monotone with
$G(0) = 0$, produces a dissipative contribution in the energy identity
(Lemma~\ref{lem:Gdissipation-weak}): testing the chemical potential equation
with $G(\phi_\varepsilon)$ yields a positive gradient term, so the source term
reinforces rather than competes with the dissipation. As a result, the right-hand
side of the energy inequality is a fixed constant independent of $\varepsilon$,
and no Gronwall argument is needed. This stands in contrast to problems with a
general nonconservative forcing term, where the source would need to be controlled
by Gronwall at the cost of an exponential-in-time factor.

The second main result, Theorem~\ref{thm:corrector-estimate}, is a quantitative
corrector estimate. Under an additional $H^2$-regularity assumption on the
homogenized solution (Assumption~\ref{ass:quant-reg}) and well-prepared initial
data, we construct first-order corrector approximations
\begin{equation*}
	\Phi_\e=\phi + \varepsilon\,\sum_{i=1}^d \Qc_\varepsilon(\partial_{x_i}\phi)\,
	\chi_i\!\left(\tfrac{x}{\varepsilon}\right),
	\quad\text{and}\quad
	M_\e=\mu + \varepsilon\,\sum_{i=1}^d \Qc_\varepsilon(\partial_{x_i}\mu)\,
	\chi_i\!\left(\tfrac{x}{\varepsilon}\right),
\end{equation*}
where $\Qc_\varepsilon$ is the $Q_1$ scale-splitting operator from \cite{CDG08,CDG18,Griso2004} applied on the full domain $\Omega$, and
$\chi_i \in H^1_\per(Y_p)$ are the scalar pore-cell correctors solving
the Neumann cell problem~\eqref{eq:cell-problem}. The estimate reads
\begin{multline}
	\label{eq:rate-intro}
	\|\phi_\varepsilon-\phi\|_{L^2(0,T;L^2(\Omega_p^\varepsilon))}
	+
	\left\|
	\nabla\phi_\varepsilon
	-
	\left[
	\nabla\phi
	+
	\sum_{i=1}^d
	\Qc_\varepsilon(\partial_{x_i}\phi)
	\nabla_y\chi_i\!\left(\tfrac{x}{\varepsilon}\right)
	\right]
	\right\|_{L^2((0,T)\times\Omega_p^\varepsilon)}\\
	+
	\|\mu_\varepsilon-\mu\|_{L^2(0,T;H^1(\Omega_p^\varepsilon)')}
	\leq C \varepsilon^{1/2}.
\end{multline}
The three terms on the left measure: the uncorrected $L^2$-error in the order
parameter, the corrected gradient error in the order parameter, and the chemical
potential error in the dual norm $L^2(0,T;H^1(\Omega_p^\varepsilon)')$.
The proof of Theorem~\ref{thm:corrector-estimate} uses a negative-norm energy
method: the phase error equation is tested against $\mathcal{N}_\varepsilon
\widetilde{e}_\phi^\varepsilon$, where $\mathcal{N}_\varepsilon$ is the Neumann
inverse Laplacian on $\Omega_p^\varepsilon$ and $\widetilde{e}_\phi^\varepsilon$
is the zero-mean part of the phase error. This produces an evolution equation
for the $H^{-1}_\varepsilon$-norm of the error (defined in Section~\ref{subsec:rates}
via the Neumann inverse Laplacian on $\Omega_p^\varepsilon$), which is then coupled with the
chemical potential error equation tested against $\widetilde{e}_\phi^\varepsilon$
itself. The resulting system is closed by a Gronwall argument using the residual
estimate of Lemma~\ref{lem:residual}.
A gradient
estimate for $\mu_\varepsilon - \mu$ in $L^2$ is not obtained; this is sharp
within the present variational framework, since controlling
$\|\nabla(\mu_\varepsilon - M_\varepsilon)\|_{L^2}$ would require testing the
chemical potential error equation with $\partial_t e_\phi^\varepsilon$, which is
not justified at the regularity level of the residuals; see
Remark~\ref{rem:no-mu-gradient}. On the flat torus $\mathbb{T}^d$, where there
is no boundary layer of incomplete cells, the rate improves to
$\mathcal{O}(\varepsilon)$ (Remark~\ref{rem:periodic-improved}).

\smallskip
The Cahn--Hilliard equation has been extensively studied analytically.
Well-posedness for regular potentials via Faedo--Galerkin is classical;
see~\cite{Elliott1989,ElliottZheng1986,NicolaenkoScheurerTemam1989} and
the monograph~\cite{Miranville2019} and survey~\cite{Wu2021}. For singular potentials, well-posedness and
separation results are in~\cite{AbelsWilke2007,BloweyElliott1991,CherfilisMiranvilleZelik2011,ElliottLuckhaus1991}. Well-posedness with a
Lipschitz source term of the type considered here follows
from~\cite{Elliott1989,NicolaenkoScheurerTemam1989} and is recalled in
Theorem~\ref{thm:existence} for the fixed-$\varepsilon$ problem on
$\Omega_p^\varepsilon$.

Homogenization of the Cahn--Hilliard equation in heterogeneous media has been
studied both formally and rigorously. Formal upscaling via matched asymptotic
expansions appears in~\cite{SchmuckPavliotisKalliadasis2014,SchmuckPradasPavliotisKalliadasis2012}, and qualitative
convergence by evolutionary $\Gamma$-convergence in~\cite{liero2018homogenization}. Qualitative homogenization of Cahn-Hilliard system coupled with Stokes equation is present in \cite{bavnas2017homogenization,daly2015homogenization,lakhmara2022homogenization,schmuck2013derivation} and Navier-Stokes equation in \cite{bunoiu2020homogenization,chakrabortty2025navier}.
The periodic unfolding method for perforated domains, developed
in~\cite{CDG08+,CD06} and collected in~\cite{CDG18}, provides the natural
framework for identifying the two-scale limit on $\Omega_p^\varepsilon$. We use
this framework throughout Sections~\ref{sec:homogenization}--\ref{sec:quant}.

Quantitative convergence rates in periodic homogenization have a long history.
The energy method of Bensoussan--Lions--Papanicolaou~\cite{BLP1978} gives
$\mathcal{O}(\varepsilon^{1/2})$ in $H^1$ for second-order elliptic problems on
bounded domains; see also~\cite{JikovKozlovOlejnik1994}. Griso~\cite{Griso2004,
	Griso2006} removed the $W^{1,\infty}$ assumption on correctors while keeping the
$\mathcal{O}(\varepsilon^{1/2})$ rate in $L^2(\Omega)$ and in the corrected
gradient, by introducing the scale-splitting operator $\Qc_\varepsilon$; this is the
approach we adopt. Reaching $\mathcal{O}(\varepsilon)$ in $L^2$ requires more.
For scalar elliptic equations on $\mathbb{R}^n$, Griso~\cite{Griso2006} obtained
it via interior estimates. For elliptic systems on bounded $C^{1,1}$ domains with
H\"{o}lder coefficients, Kenig--Lin--Shen~\cite{KenigLinShen2012} established
$\mathcal{O}(\varepsilon)$ in $L^2$ via a duality argument, building on the uniform
$W^{1,p}$ estimates of~\cite{KenigShen2011a,KenigShen2011b}. Extensions cover Green
and Neumann functions~\cite{KenigLinShen2014}, boundary estimates in $C^{1,\alpha}$
domains~\cite{Shen2017}, and linear elasticity~\cite{ShenZhuge2017}. For bounded
measurable coefficients, the $\mathcal{O}(\varepsilon)$ rate comes instead from the
operator-estimate approach of Birman--Suslina~\cite{BirmanSuslina2004,
	BirmanSuslina2006,BirmanSuslina2007}, surveyed in~\cite{ZhikovPastukhova2016};
see also~\cite{Zhikov2006,ZhikovPastukhova2005}. For perforated domains,
$\mathcal{O}(\varepsilon^{1/2})$ in $H^1$ is in~\cite{OnofreiVernescu2007}. For
Stokes systems with Neumann conditions, rates $\mathcal{O}(\varepsilon^{1/2})$ in
$H^1$ and $\mathcal{O}(\varepsilon)$ in $L^2$ are proved in~\cite{Gu2016,Gu2018}
via the Steklov smoothing operator and the duality argument of~\cite{KenigLinShen2012}.
For parabolic systems with time-dependent periodic coefficients, $\mathcal{O}(\varepsilon)$
in $L^2$ is in~\cite{GengShen2016}. Two-scale convergence goes back to
Nguetseng~\cite{Nguetseng1989} and Allaire~\cite{Allaire1992}. For a
systematic operator-theoretic treatment of quantitative two-scale approximations,
including high-contrast and degenerating problems, see~\cite{CooperKamotskiSmyshlyaev2025}. Quantitative rates for incompressible Navier-Stokes in a perforated domain in $\R^3$ is established in \cite{hofer2023homogenization} and compressible in \cite{hofer2024quantitative}.

The only prior quantitative result for a Cahn--Hilliard system in a perforated
medium is Schmuck--Kalliadasis~\cite{Schmuck17}. They prove an
$\mathcal{O}(\varepsilon^{1/4})$ rate for the $H^1$-error in the order parameter
by the classical energy method with a boundary cutoff. The rate degrades from
$\mathcal{O}(\varepsilon^{1/2})$ because the fourth-order structure introduces
boundary terms in the corrector equation of order $\varepsilon^{-1/2}$. Their
argument requires cell correctors in $W^{1,\infty}$ and the macroscopic solution
in $C^1(0,T;W^{k,\infty}(\Omega))$ for $k \geq 4$ (Assumption~C
in~\cite{Schmuck17}). Remark~4 of that paper anticipates that an operator-estimate
approach in the spirit of Suslina~\cite{Suslina2013Neumann,Suslina2013Dirichlet}
should recover the natural $\mathcal{O}(\varepsilon)$ rate for fourth-order operators.

\smallskip
\noindent\textbf{Main contributions.}
This paper makes two contributions to the homogenization theory of phase-field
equations in perforated media.

The first is qualitative. We derive the homogenized Cahn--Hilliard
system~\eqref{eq:hom-CH} with a nonconservative monotone source term via the
periodic unfolding method, treating $G$ under Assumptions~\ref{ass:F}--\ref{ass:G}
without specializing to a particular double-well potential.

The second is quantitative. Under the additional regularity
Assumption~\ref{ass:quant-reg} on the homogenized solution, we prove the corrector
estimate~\eqref{eq:rate-intro} of order $\varepsilon^{1/2}$. Specifically, the
uncorrected order-parameter error is controlled in $L^2(0,T;L^2(\Omega_p^\varepsilon))$,
the corrected gradient error in $L^2((0,T)\times\Omega_p^\varepsilon)$, and the
chemical potential error in the dual norm $L^2(0,T;H^1(\Omega_p^\varepsilon)')$.
This improves the rate $\varepsilon^{1/4}$ of~\cite{Schmuck17} — where the
error is measured in the uncorrected $H^1$-norm and no first-order corrector
is subtracted — and matches the natural rate for second-order elliptic problems
in perforated domains~\cite{OnofreiVernescu2007}.
Two methodological points drive the improvement.
First, the scale-splitting operator $\Qc_\varepsilon$ from~\cite{CDG02,Griso2004,
	Griso2006} replaces the classical formal corrector expansion, allowing cell
correctors to stay in $H^1_\per(Y_p)$ without any $W^{1,\infty}$ assumption and
reducing the required regularity of the macroscopic solution from $W^{k,\infty}$
($k \geq 4$) to $H^2$. Second, the consistency estimate
(Lemma~\ref{lem:residual}) is derived entirely in variational form, using the
weak no-flux condition on $\Gamma_s^\varepsilon$ inherited from the cell problem;
this avoids the boundary-term contributions that cause the rate loss
in~\cite{Schmuck17}. The $\varepsilon^{1/2}$ rate on bounded domains comes from
the collar estimate for the boundary layer of incomplete cells near $\partial\Omega$,
not from the fourth-order structure. On the flat torus, where this layer is absent,
the rate improves to $\mathcal{O}(\varepsilon)$.

\noindent\textbf{Outline.}
Section~\ref{sec:problem} introduces the geometry, states the assumptions on $F$
and $G$, formulates the microscopic weak problem, derives the energy inequality,
and establishes the uniform a priori estimates. Section~\ref{sec:homogenization}
recalls the periodic unfolding operators, proves compactness and two-scale
convergence, derives the cell problems, and establishes
Theorem~\ref{thm:qual-hom}. Section~\ref{sec:quant} introduces $Q_\varepsilon$,
constructs the corrector approximations $\Phi_\varepsilon$ and $M_\varepsilon$,
proves the variational residual estimate (Lemma~\ref{lem:residual}), and
establishes Theorem~\ref{thm:corrector-estimate} together with the corollaries
on the uncorrected and corrected gradient rates. Section~\ref{rem:future-rate} concludes with possible improvements.


\section{The microscopic problem}
\label{sec:problem}

\subsection{Geometry of the periodic perforated domain}
\label{sec:geometry}

Let $\Omega\subset\mathbb{R}^d$, $d\in\{2,3\}$, be a bounded Lipschitz domain.
Let $Y := (0,1)^d$
be the reference periodic cell, and let $Y_s\subset Y$ be a nonempty open set
with Lipschitz boundary $\partial Y_s$, representing the solid (impermeable)
inclusion in the reference cell. We assume the frame condition $\overline{Y_s}\subset Y$,
i.e.\ the solid inclusion is compactly contained in the reference cell and
does not touch $\partial Y$. We define the pore part of the cell by $Y_p := Y\setminus\overline{Y_s}$.
Since $\overline{Y_s}$ is closed and $\overline{Y_s}\subset Y$, the set $Y_p$
is automatically open, and $\partial Y_p=\partial Y\,\cup\,\partial Y_s$,
where the two pieces are disjoint (since $\partial Y_s\cap\partial Y=\emptyset$
by the frame condition). We assume in addition that $Y_p$ is connected. The
porosity of the cell is
\[
\theta_p:=|Y_p|\in(0,1).
\]
Throughout, $H^1_{\per}(Y_p)$ denotes the space of $Y$-periodic $H^1$
functions on $Y_p$, and $H^1_{\per,0}(Y_p)$ denotes its mean-zero
subspace $\{v\in H^1_{\per}(Y_p) : \int_{Y_p}v\,dy=0\}$. The cell
correctors belong to $H^1_{\per,0}(Y_p)$, with the mean-zero condition
inherited from the two-scale structure of the unfolding operator.

\noindent\textbf{The perforated domain.}
For $0<\varepsilon<\varepsilon_0$, with a fixed $\varepsilon_0$, let
\[
K_\varepsilon:=\bigl\{\,k\in\mathbb{Z}^d \;:\; \varepsilon(k+Y)\subset\Omega\,\bigr\}
\]
be the set of indices of cells entirely contained in $\Omega$. By the frame
condition, the rescaled closed inclusions $\varepsilon(k+\overline{Y_s})$,
$k\in K_\varepsilon$, are pairwise disjoint and each is compactly contained in
$\varepsilon(k+Y)\subset\Omega$. We set
\[
\Omega^\e_s
:= \operatorname{int}\!\left(\bigcup_{k\in K_\varepsilon}
\varepsilon\bigl(k+\overline{Y_s}\bigr)\right),
\qquad
\O^\e_p := \Omega\setminus\overline{\Omega^\e_s}.
\]
The internal oscillating (pore-wall) boundary is
\[
\Gamma_\varepsilon^s
:= \partial\Omega^\e_s\cap\Omega
= \bigcup_{k\in K_\varepsilon}\varepsilon\bigl(k+\partial Y_s\bigr).
\]
By the frame condition, every connected component of $\Omega^\e_s$
satisfies $\operatorname{dist}(\overline{\Omega^\e_s},\partial\Omega)
\geq c\,\varepsilon$ for some $c>0$ independent of $\varepsilon$; in
particular $\O^\e_p$ contains a fixed-width boundary layer
along $\partial\Omega$ that is free of perforations, and
\[
\partial\O^\e_p=\partial\Omega\,\cup\,\Gamma_\varepsilon^s,
\]
with the two pieces disjoint. We denote by $\bn_\varepsilon$ the unit outward
normal vector field on $\partial\O^\e_p$, i.e.\ the outward
normal to $\Omega$ on $\partial\Omega$ and the normal pointing from
$\O^\e_p$ into $\Omega^\e_s$ on $\Gamma_\varepsilon^s$.

Moreover, we set
$$
\O_\e=\text{interior}\left\{\bigcup_{\kappa\in\Kc_\e}\e(\kappa+\overline{Y})\right\},$$
where $\Lambda_\e=(\O\setminus\O_\e)$ contains the part of the cells intersecting
$\partial\O$; since $\O$ is bounded with Lipschitz boundary, $|\Lambda_\e|\to0$ as
$\e\to0$. 

\begin{remark}\label{rem:frame}
	The frame condition $\overline{Y_s}\subset Y$, together with the Lipschitz
	boundary of $Y_s$ and the connectivity of $Y_p$ assumed above, is precisely
	the hypothesis under which $\O^\e_p$ is connected, has a uniform Lipschitz
	character (independent of $\varepsilon$), and admits a uniform extension
	operator $P_\varepsilon\colon H^1(\O^\e_p)\to H^1(\Omega)$ with
	$\|P_\varepsilon\|_{\mathcal L(H^1(\O^\e_p),H^1(\Omega))}$ bounded
	independently of $\varepsilon$; see~\cite{AcerbiChiadoPiatDalMasoPercivale1992,
		CioranescuDonato1999,oleinik1992mathematical}.
	This extension operator, combined with a contradiction argument
	(assuming the Poincar\'e--Wirtinger constant blows up along a sequence
	$\varepsilon_n\to0$, normalizing, and extracting a compact subsequence via
	the extension to reach a contradiction), yields a uniform
	Poincar\'e--Wirtinger inequality on $\O^\e_p$ with constant independent
	of $\varepsilon$; see~\cite{damlamian2002sequences} for details.
	This inequality will be used in the uniform energy estimates of
	Section~\ref{sec:quant}.
\end{remark}

\subsection{Problem description}

Let $T\in(0,\infty)$. The unknowns are the order parameter
$\phi_\varepsilon\colon(0,T)\times\O^\e_p\to\mathbb{R}$ and
the chemical potential
$\mu_\varepsilon\colon(0,T)\times\O^\e_p\to\mathbb{R}$, solving
\begin{equation}\label{eq:strong}
	\begin{cases}
		\partial_t\phi_\varepsilon-\Delta\mu_\varepsilon+G(\phi_\varepsilon)=0,
		& \text{in } (0,T)\times\O^\e_p,\\[2pt]
		\mu_\varepsilon=-\Delta\phi_\varepsilon+F'(\phi_\varepsilon),
		& \text{in } (0,T)\times\O^\e_p,\\[2pt]
		\nabla\phi_\varepsilon\cdot \bn_\varepsilon
		=\nabla\mu_\varepsilon\cdot \bn_\varepsilon=0,
		& \text{on } (0,T)\times\partial\O^\e_p,\\[2pt]
		\phi_\varepsilon(0,\cdot)=\phi_\varepsilon^0,
		& \text{in } \O^\e_p.
	\end{cases}
\end{equation}
The homogeneous Neumann conditions are imposed on the {whole} boundary
$\partial\O^\e_p=\partial\Omega\cup\Gamma_\varepsilon^s$: there is
no flux of $\phi_\varepsilon$ or $\mu_\varepsilon$ either through the outer
boundary $\partial\Omega$ or through the pore walls $\Gamma_\varepsilon^s$.


\begin{assumption}[General Potential]\label{ass:F}
	The potential $F\in C^2(\mathbb{R})$ satisfies the polynomial growth
	condition: there exists $C>0$ such that
	\[
	|F'(s)|\le C\bigl(1+|s|^3\bigr),
	\qquad
	|F''(s)|\le C\bigl(1+|s|^2\bigr),
	\qquad \forall s\in\mathbb{R}.
	\]
	Moreover:
	\begin{enumerate}
		\item[(i)] $F$ is bounded from below: $F(s)\ge -c_3$ for some $c_3\ge0$.
		\item[(ii)] ({Coercivity}) there exist $c_0>0$, $C_0\ge0$ such that
		\[
		F(s)\ge c_0|s|^4-C_0, \qquad \forall s\in\mathbb{R}.
		\]
		\item[(iii)] ({Dissipativity}) there exist $c_1>0$, $c_2\ge0$ such that
		\[
		s\,F'(s)\ge c_1F(s)-c_2, \qquad \forall s\in\mathbb{R}.
		\]
	\end{enumerate}
	The model example $F(s)=\tfrac14(s^2-1)^2$, $F'(s)=s^3-s$, satisfies all of
	the above with $c_1=4,c_2=1$ in (iii) (and $c_0=\tfrac14$, $C_0=\tfrac14$ in
	(ii)).
\end{assumption}

\begin{assumption}[Source term]\label{ass:G}
	The source term $G\in C^1(\mathbb{R})$ satisfies
	\[
	G(0)=0, \qquad 0<c_G\le G'(s)\le C_G \qquad \forall s\in\mathbb{R}.
	\]
	Consequently $G$ is globally Lipschitz with $|G(s)|\le C_G|s|$, and
	\[
	\bigl(G(a)-G(b)\bigr)(a-b)\ge c_G|a-b|^2 \qquad \forall a,b\in\mathbb{R}.
	\]
\end{assumption}

\begin{assumption}[Initial data]\label{ass:init}
	The initial data $\phi_\varepsilon^0\in H^1(\O^\e_p)$ satisfy
	\[
	\|\phi_\varepsilon^0\|_{H^1(\O^\e_p)}\le C,
	\]
	with $C$ independent of $\varepsilon$. Moreover, there exists
	$\phi_0\in H^1(\Omega)$ such that the unfolded initial data converge
	strongly in the sense of unfolding operator from Definition \ref{def:uo},
	\[
	\mathcal{T}^\ast_\varepsilon(\phi_\varepsilon^0)\to\phi_0
	\quad\text{strongly in } L^2(\Omega\times Y_p).
	\]
\end{assumption}


We use the standard notation $\langle\cdot,\cdot\rangle_{H^1(\O^\e_p)',H^1(\O^\e_p)}$
for the duality pairing between $H^1(\O^\e_p)'$ and
$H^1(\O^\e_p)$, and $(\cdot,\cdot)$ for the $L^2(\O^\e_p)$
inner product.

\begin{definition}[Weak solution]\label{def:weak}
	A pair
	\[
	(\phi_\varepsilon,\mu_\varepsilon)\in
	\Bigl[L^\infty(0,T;H^1(\O^\e_p))\cap
	H^1(0,T;H^1(\O^\e_p)')\Bigr]
	\times L^2(0,T;H^1(\O^\e_p))
	\]
	is called a {weak solution} of \eqref{eq:strong} if
	\begin{enumerate}
		\item for all $\zeta\in L^2(0,T;H^1(\O^\e_p))$,
		\begin{equation}\label{eq:weak1}
			\int_0^T\bigl\langle\partial_t\phi_\varepsilon,\zeta\bigr\rangle_{H^1(\O^\e_p)',H^1(\O^\e_p)}\,dt
			+\int_0^T\!\!\int_{\O^\e_p}\nabla\mu_\varepsilon\cdot\nabla\zeta\,dx\,dt
			+\int_0^T\!\!\int_{\O^\e_p}G(\phi_\varepsilon)\,\zeta\,dx\,dt=0;
		\end{equation}
		\item for a.e.\ $t\in(0,T)$ and for all $\eta\in H^1(\O^\e_p)$,
		\begin{equation}\label{eq:weak2}
			\int_{\O^\e_p}\mu_\varepsilon(t)\,\eta\,dx
			=\int_{\O^\e_p}\nabla\phi_\varepsilon(t)\cdot\nabla\eta\,dx
			+\int_{\O^\e_p}F'(\phi_\varepsilon(t))\,\eta\,dx;
		\end{equation}
		\item the initial condition is attained:
		\begin{equation}\label{eq:weakinit}
			\phi_\varepsilon(0)=\phi_\varepsilon^0 \quad\text{in } L^2(\O^\e_p).
		\end{equation}
	\end{enumerate}
\end{definition}

\begin{remark}\label{rem:weaksense}
	The space $L^\infty(0,T;H^1(\O^\e_p))\cap H^1(0,T;H^1(\O^\e_p)')$
	embeds continuously into $C([0,T];L^2(\O^\e_p))$ via the standard
	interpolation lemma for evolution triples
	$H^1(\O^\e_p)\hookrightarrow L^2(\O^\e_p)\hookrightarrow H^1(\O^\e_p)'$;
	see~\cite{Temam1997}. Hence the initial condition~\eqref{eq:weakinit}
	is meaningful pointwise at $t=0$. Condition~\eqref{eq:weak2} is imposed
	for a.e.\ $t$ rather than in integrated form, since no time derivative
	of $\mu_\varepsilon$ appears in the system.
\end{remark}

\subsection{Structure of the source term}

Two estimates for the source term $G$ are derived below from Assumption~\ref{ass:G}.

\begin{lemma}[Structure of $G$]\label{lem:Gstructure}
	Let $G$ satisfy Assumption~\ref{ass:G}. Then for every $s\in\mathbb{R}$:
	\begin{enumerate}
		\item[(i)] $c_G|s|\le |G(s)|\le C_G|s|$, and $G(s)$ has the same sign as
		$s$; equivalently,
		\[
		c_G\,s^2 \le G(s)\,s \le C_G\,s^2,\quad G(s)\,\mathrm{sgn}(s)\ge c_G|s|.\]
		\item[(ii)] Let $F$ satisfy
		Assumption~\ref{ass:F}(i),(iii). Set $C_3:=c_1c_3+c_2\ge0$. Then
		\[
		G(s)\,F'(s)\;\ge\;-C_GC_3 \qquad\forall s\in\mathbb{R}.
		\]

\item[(iii)] If $v\in
H^1(\O^\e_p)$, then $G(v)\in H^1(\O^\e_p)$, with
\[
\nabla\bigl(G(v)\bigr)=G'(v)\,\nabla v \qquad \text{a.e.\ in }\O^\e_p.
\]
Moreover $\|G(v)\|_{L^2(\O^\e_p)}\le C_G\|v\|_{L^2(\O^\e_p)}$
and $\|\nabla G(v)\|_{L^2(\O^\e_p)}\le C_G\|\nabla v\|_{L^2(\O^\e_p)}$.
	\end{enumerate}
\end{lemma}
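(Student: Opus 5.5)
The three items are independent, and each follows from Assumption~\ref{ass:G} together with elementary calculus. For (i), write $G(s)=G(s)-G(0)=\int_0^1 G'(\tau s)\,d\tau\; s$ by the fundamental theorem of calculus. The bounds $c_G\le G'\le C_G$ place the scalar factor $g(s):=\int_0^1G'(\tau s)\,d\tau$ in $[c_G,C_G]$. Hence $G(s)=g(s)s$, which immediately gives:
\begin{itemize}
\item $G(s)$ has the sign of $s$;
\item $c_G|s|\le|G(s)|\le C_G|s|$;
\item $c_Gs^2\le G(s)s=g(s)s^2\le C_Gs^2$;
\item $G(s)\,\mathrm{sgn}(s)=g(s)|s|\ge c_G|s|$.
\end{itemize}

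For (ii), I would use the same factorization, $G(s)F'(s)=g(s)\,sF'(s)$ with $g(s)\in[c_G,C_G]$. By Assumption~\ref{ass:F}(iii) and (i), $sF'(s)\ge c_1F(s)-c_2\ge -c_1c_3-c_2=-C_3$. Now split into two cases. If $sF'(s)\ge0$, then $G(s)F'(s)\ge0\ge -C_GC_3$. If $sF'(s)<0$, then $g(s)\,sF'(s)\ge C_G\,sF'(s)\ge -C_GC_3$, since multiplying a negative number by the larger factor $C_G\ge g(s)$ only decreases it. (At $s=0$ the product vanishes.) This case split is the only slightly delicate point: one must use the upper bound $C_G$, not $c_G$, on the negative branch.

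For (iii), I would invoke the standard chain rule for Lipschitz $C^1$ functions composed with Sobolev functions. Since $G\in C^1(\mathbb R)$ with $G'$ bounded and $G(0)=0$, for $v\in H^1(\O^\e_p)$ we have $G(v)\in H^1(\O^\e_p)$ and $\nabla G(v)=G'(v)\nabla v$ a.e. The usual proof goes as follows:
\begin{itemize}
\item approximate $v$ in $H^1(\O^\e_p)$ by smooth functions $v_n$, which is possible on the Lipschitz domain $\O^\e_p$, or one can simply work locally, since the chain rule is local;
\item pass to a subsequence with $v_n\to v$ a.e.;
\item conclude by dominated convergence, using $|G'|\le C_G$, continuity of $G'$, and $|G(v_n)-G(v)|\le C_G|v_n-v|$.
\end{itemize}
The two norm bounds then follow pointwise: $|G(v)|\le C_G|v|$ from (i), and $|\nabla G(v)|=|G'(v)||\nabla v|\le C_G|\nabla v|$. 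Integrating over $\O^\e_p$ gives both estimates, with constants independent of $\e$.
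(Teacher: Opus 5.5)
Your proposal is correct and follows the paper's proof essentially step for step. Your factor $g(s)=\int_0^1 G'(\tau s)\,d\tau$ is the paper's $\theta(s)=G(s)/s$ (with $\theta(0)=G'(0)$), and (iii) uses the same smooth-approximation and dominated-convergence argument for the chain rule. In (ii) the case split on the sign of $sF'(s)$ is identical; the one small difference is that on the nonnegative branch you get the sharper bound $G(s)F'(s)\ge 0$, where the paper records $-c_GC_3$.
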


\begin{proof}
	\textbf{(i)} Since $G(0)=0$ and $c_G\le G'\le C_G$, the fundamental theorem
	gives $G(s)=\int_0^s G'(r)\,dr$, so $c_G|s|\le|G(s)|\le C_G|s|$ and
	$\mathrm{sgn}(G(s))=\mathrm{sgn}(s)$ for all $s$. Hence
	$G(s)s=|G(s)|\,|s|\in[c_Gs^2,C_Gs^2]$ and
	$G(s)\,\mathrm{sgn}(s)=|G(s)|\ge c_G|s|$.
	
	\smallskip
	\textbf{(ii)} By Assumption~\ref{ass:F}(i),(iii),
	$sF'(s)\ge c_1F(s)-c_2\ge -c_1c_3-c_2=-C_3$ for all $s$.
	Write $G(s)=\theta(s)s$ where $\theta(s):=G(s)/s\in[c_G,C_G]$ for $s\ne0$
	(by the mean-value theorem) and $\theta(0):=G'(0)$. Then
	\[
	G(s)F'(s)=\theta(s)\,(sF'(s)).
	\]
	If $sF'(s)\ge0$, multiply the bound $sF'(s)\ge-C_3$ by $\theta(s)\ge c_G>0$
	to get $G(s)F'(s)\ge-c_GC_3$. If $sF'(s)<0$, since $\theta(s)\le C_G$
	multiplying the negative quantity by the smaller factor gives the larger product,
	so $G(s)F'(s)\ge C_G(sF'(s))\ge-C_GC_3$. Since $C_G\ge c_G$, both cases
	give $G(s)F'(s)\ge-C_GC_3$.
	
	\smallskip
	\textbf{(iii)} The pointwise bound $|G(v)|\le C_G|v|$ gives
	$G(v)\in L^2(\O^\e_p)$. The chain rule $\nabla(G(v))=G'(v)\nabla v$
	holds in $H^1(\O^\e_p)$ by approximating $v$ by smooth functions,
	using $|G'|\le C_G$ and dominated convergence, and invoking the closedness
	of the weak gradient; see e.g.~\cite{ElliottZheng1986}. The stated
	$L^2$-bounds follow immediately from $|G'|\le C_G$.
\end{proof}

\begin{lemma}[Dissipation produced by the source term]\label{lem:Gdissipation-weak}
	Let $F$ satisfy Assumption~\ref{ass:F} and $G$ satisfy Assumption~\ref{ass:G},
	and let $(\phi_\varepsilon,\mu_\varepsilon)$ be a weak solution of
	\eqref{eq:strong} in the sense of Definition~\ref{def:weak}. Then for a.e.\
	$t\in(0,T)$,
	\begin{equation}\label{eq:Gdissipation-weak}
		\int_{\O^\e_p}G(\phi_\varepsilon(t))\,\mu_\varepsilon(t)\,dx
		\;\ge\;
		c_G\|\nabla\phi_\varepsilon(t)\|^2_{L^2(\O^\e_p)}
		-C_GC_3\,|\O^\e_p|,
	\end{equation}
	where $C_3:=c_1c_3+c_2\ge0$ is the constant from
	Lemma~\ref{lem:Gstructure}.
\end{lemma}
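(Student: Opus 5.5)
The plan is to test the chemical potential equation \eqref{eq:weak2} at a fixed time $t$ with $\eta:=G(\phi_\varepsilon(t))$. For a.e.\ $t$ we have $\phi_\varepsilon(t)\in H^1(\O^\e_p)$, since $\phi_\varepsilon\in L^\infty(0,T;H^1(\O^\e_p))$. Lemma~\ref{lem:Gstructure}(iii) then shows that this test function is admissible, with $\nabla G(\phi_\varepsilon)=G'(\phi_\varepsilon)\nabla\phi_\varepsilon$. Substituting gives
\[
\int_{\O^\e_p}G(\phi_\varepsilon)\,\mu_\varepsilon\,dx
=\int_{\O^\e_p}G'(\phi_\varepsilon)\,|\nabla\phi_\varepsilon|^2\,dx
+\int_{\O^\e_p}F'(\phi_\varepsilon)\,G(\phi_\varepsilon)\,dx .
\]
The first term on the right is bounded below by $c_G\|\nabla\phi_\varepsilon(t)\|^2_{L^2(\O^\e_p)}$, because $G'\ge c_G$ by Assumption~\ref{ass:G}. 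The second term is bounded below pointwise by $-C_GC_3$, using Lemma~\ref{lem:Gstructure}(ii). Integrating over $\O^\e_p$ gives $-C_GC_3|\O^\e_p|$, which is the claimed inequality \eqref{eq:Gdissipation-weak}.

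The main obstacle is justifying that every integral above is finite, i.e.\ that the pairings are meaningful for a.e.\ $t$.

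\begin{itemize}
\item \emph{The left-hand side.} It is finite because $\mu_\varepsilon(t)\in H^1(\O^\e_p)$ for a.e.\ $t$ (as $\mu_\varepsilon\in L^2(0,T;H^1)$) and $G(\phi_\varepsilon(t))\in L^2$.
\item \emph{The term $F'(\phi_\varepsilon)G(\phi_\varepsilon)$.} I would use $|F'(s)|\le C(1+|s|^3)$ and $|G(s)|\le C_G|s|$, so that $|F'(\phi_\varepsilon)G(\phi_\varepsilon)|\le C(|\phi_\varepsilon|+|\phi_\varepsilon|^4)$. This lies in $L^1$ by the Sobolev embedding $H^1(\O^\e_p)\hookrightarrow L^4$ (or $L^6$), valid for $d\le3$ on the Lipschitz domain $\O^\e_p$ at fixed $\varepsilon$. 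Uniformity in $\varepsilon$ is not needed here.
\item \emph{The term $F'(\phi_\varepsilon)\eta$ in \eqref{eq:weak2} for a general $\eta\in H^1$.} It is well defined by the same embedding, since $F'(\phi_\varepsilon)\in L^{4/3}$ or better. Hence plugging in $\eta=G(\phi_\varepsilon(t))$ is legitimate.
\end{itemize}

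Finally, since \eqref{eq:weak2} holds for a.e.\ $t$ and all $\eta$, the exceptional null set of times does not depend on $\eta$. The inequality therefore holds for a.e.\ $t\in(0,T)$.
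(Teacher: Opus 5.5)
Your proposal is correct and is essentially the paper's proof. The paper likewise tests \eqref{eq:weak2} with $\eta=G(\phi_\varepsilon(t))$, justifies admissibility and the chain rule via Lemma~\ref{lem:Gstructure}(iii), and then bounds the two terms below using $G'\ge c_G$ and the pointwise estimate $G(s)F'(s)\ge -C_GC_3$ from Lemma~\ref{lem:Gstructure}(ii). Your remarks on finiteness of the integrals and on the $t$-independent null set are sound details that the paper leaves implicit.
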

\begin{proof}
	Fix a.e.\ $t\in(0,T)$ for which $\phi_\varepsilon(t)\in H^1(\O^\e_p)$
	and \eqref{eq:weak2} holds.
	By Lemma~\ref{lem:Gstructure}(iii), $G(\phi_\varepsilon(t))\in H^1(\O^\e_p)$
	with $\|G(\phi_\varepsilon(t))\|_{H^1}\le C_G\|\phi_\varepsilon(t)\|_{H^1}$,
	so $\eta=G(\phi_\varepsilon(t))$ is an admissible test function in
	\eqref{eq:weak2}. Testing gives
	\begin{equation}\label{eq:test-step}
		\int_{\O^\e_p}\mu_\varepsilon(t)\,G(\phi_\varepsilon(t))\,dx
		=\int_{\O^\e_p}G'(\phi_\varepsilon(t))\,
		|\nabla\phi_\varepsilon(t)|^2\,dx
		+\int_{\O^\e_p}F'(\phi_\varepsilon(t))\,G(\phi_\varepsilon(t))\,dx,
	\end{equation}
	where we used $\nabla(G(\phi_\varepsilon))=G'(\phi_\varepsilon)\nabla\phi_\varepsilon$
	from Lemma~\ref{lem:Gstructure}(iii) (no boundary term appears since the
	weak formulation already encodes homogeneous Neumann conditions for $\phi_\varepsilon$).
	By Assumption~\ref{ass:G}, $G'(\phi_\varepsilon(t,x))\ge c_G$ a.e., so
	the first term on the right satisfies
	\[
	\int_{\O^\e_p}G'(\phi_\varepsilon(t))\,|\nabla\phi_\varepsilon(t)|^2\,dx
	\ge c_G\|\nabla\phi_\varepsilon(t)\|^2_{L^2(\O^\e_p)}.
	\]
	By Lemma~\ref{lem:Gstructure}(ii), $G(s)F'(s)\ge-C_GC_3$ for all $s$, so
	the second term satisfies
	$\int_{\O^\e_p}F'(\phi_\varepsilon(t))\,G(\phi_\varepsilon(t))\,dx
	\ge-C_GC_3|\O^\e_p|$.
	Substituting into \eqref{eq:test-step} gives \eqref{eq:Gdissipation-weak}.
\end{proof}

\subsection{The energy inequality}

Recall the energy functional
\[
E_\varepsilon(t):=\frac12\|\nabla\phi_\varepsilon(t)\|^2_{L^2(\O^\e_p)}
+\int_{\O^\e_p}F(\phi_\varepsilon(t))\,dx.
\]

Testing \eqref{eq:weak1} with $\zeta=\mu_\varepsilon$ and \eqref{eq:weak2} with
$\eta=\partial_t\phi_\varepsilon$ and adding gives, for a.e.\
$t\in(0,T)$,
\begin{equation}\label{eq:energyident}
	\frac{d}{dt}E_\varepsilon(t)+\|\nabla\mu_\varepsilon(t)\|^2_{L^2(\O^\e_p)}
	+\int_{\O^\e_p}G(\phi_\varepsilon)\,\mu_\varepsilon\,dx=0.
\end{equation}

\begin{remark}
	For a weak solution in the sense of Definition~\ref{def:weak}, $\eta=
	\partial_t\phi_\varepsilon(t)\in H^1(\O^\e_p)'$ is not a
	priori an admissible test function in \eqref{eq:weak2}. Identity
	\eqref{eq:energyident} is therefore understood as holding for the Galerkin
	approximations $(\phi_\varepsilon^{(n)},\mu_\varepsilon^{(n)})$ used to
	construct the weak solution (where $\eta=\partial_t\phi_\varepsilon^{(n)}$
	is finite-dimensional and hence admissible). The estimates below are derived
	at the level of these approximations; passing to the limit $n\to\infty$,
	weak lower semicontinuity of the norms $\|\nabla\phi_\varepsilon\|_{L^2}$,
	$\|\nabla\mu_\varepsilon\|_{L^2}$ and of $\int F(\phi_\varepsilon)$ (by
	Fatou's lemma, using $F$ bounded below) turns
	\eqref{eq:energyident} into the {inequality} \eqref{eq:energyineq} for
	the weak solution $(\phi_\varepsilon,\mu_\varepsilon)$ itself.
\end{remark}

Combining \eqref{eq:energyident} with Lemma~\ref{lem:Gdissipation-weak} and
$|\O^\e_p|\le|\Omega|$ yields, with $C_3=c_1c_3+c_2$ as in
Lemma~\ref{lem:Gstructure},
\begin{equation}\label{eq:energyineq}
	\frac{d}{dt}E_\varepsilon(t)+\|\nabla\mu_\varepsilon(t)\|^2_{L^2(\O^\e_p)}
	+c_G\|\nabla\phi_\varepsilon(t)\|^2_{L^2(\O^\e_p)}
	\;\le\; C_GC_3\,|\Omega|.
\end{equation}

As in the double-well case, the right-hand side of \eqref{eq:energyineq} is a
fixed constant independent of $\e$.

\noindent\textbf{Bound on the initial energy.} By Assumption~\ref{ass:init},
$\|\phi_\varepsilon^0\|_{H^1(\O^\e_p)}\le C$. By
Assumption~\ref{ass:F}, $|F'(s)|\le C(1+|s|^3)$, so integrating gives the
growth bound $|F(s)|\le C(1+|s|^4)$ for all $s\in\mathbb{R}$. Together with
the uniform Sobolev embedding $H^1(\O^\e_p)\hookrightarrow
L^4(\O^\e_p)$ (Remark~\ref{rem:frame}), this gives
\[
E_\varepsilon(0)=\frac12\|\nabla\phi_\varepsilon^0\|^2_{L^2(\O^\e_p)}
+\int_{\O^\e_p}F(\phi_\varepsilon^0)\,dx
\le C\bigl(1+\|\phi_\varepsilon^0\|^4_{L^4(\O^\e_p)}\bigr)\le C,
\]
with $C$ independent of $\varepsilon$.

\noindent\textbf{Integration in time.} Integrating \eqref{eq:energyineq} over
$(0,t)$, $t\in[0,T]$, and using $E_\varepsilon(0)\le C$,
\begin{equation}\label{eq:energy-int}
	E_\varepsilon(t)+\int_0^t\|\nabla\mu_\varepsilon\|^2_{L^2(\O^\e_p)}\,ds
	+c_G\int_0^t\|\nabla\phi_\varepsilon\|^2_{L^2(\O^\e_p)}\,ds
	\le E_\varepsilon(0)+C_GC_3|\Omega|\,T \le C(T).
\end{equation}
\noindent\textbf{From the energy to $H^1$ and $L^4$ bounds.} Unlike the
double-well case, $F$ is here only bounded below
(Assumption~\ref{ass:F}(i)), so $E_\varepsilon(t)\ge0$ no longer holds and
$E_\varepsilon(t)$ alone does not control $\|\nabla\phi_\varepsilon(t)\|^2_{L^2}$. Instead,
by Assumption~\ref{ass:F}(i), $F(s)\ge-c_3$, so
\[
\int_{\O^\e_p}F(\phi_\varepsilon(t))\,dx\ge -c_3|\O^\e_p|\ge-c_3|\Omega|,
\]
and hence
\[
\frac12\|\nabla\phi_\varepsilon(t)\|^2_{L^2(\O^\e_p)}
\le E_\varepsilon(t)+c_3|\Omega|
\stackrel{\eqref{eq:energy-int}}{\le} C(T)+c_3|\Omega|=:C_1.
\]
This gives, uniformly in $t\in[0,T]$ and $\varepsilon$:
\begin{equation}\label{eq:gradbound}
	\|\nabla\phi_\varepsilon\|_{L^\infty(0,T;L^2(\O^\e_p))}\le C_1.
\end{equation}

For the $L^4$-bound, we now use the coercivity assumption
\ref{ass:F}(ii), $F(s)\ge c_0|s|^4-C_0$, which gives
\[
c_0\int_{\O^\e_p}\phi_\varepsilon(t)^4\,dx
\le \int_{\O^\e_p}F(\phi_\varepsilon(t))\,dx+C_0|\O^\e_p|
\le E_\varepsilon(t)+C_0|\Omega|
\le C(T)+C_0|\Omega|.
\]
Hence
\begin{equation}\label{eq:L4bound}
	\|\phi_\varepsilon\|_{L^\infty(0,T;L^4(\O^\e_p))}\le C_2,
\end{equation}
with $C_2$ independent of $\varepsilon$.

Combining \eqref{eq:energy-int},
\eqref{eq:gradbound} and \eqref{eq:L4bound}, there exists $C(T)>0$,
independent of $\varepsilon$, such that
\begin{equation}\label{eq:uniformbound1}
	\sup_{t\in[0,T]}\Bigl[\|\nabla\phi_\varepsilon(t)\|^2_{L^2(\O^\e_p)}
	+\|\phi_\varepsilon(t)\|^4_{L^4(\O^\e_p)}\Bigr]
	+\int_0^T\|\nabla\mu_\varepsilon\|^2_{L^2(\O^\e_p)}\,dt
	+\int_0^T\|\nabla\phi_\varepsilon\|^2_{L^2(\O^\e_p)}\,dt
	\le C(T).
\end{equation}

\begin{lemma}[Uniform bound on the mean]\label{lem:meanbound}
	Let $\bar\phi_\varepsilon(t):=\dfrac{1}{|\O^\e_p|}\displaystyle\int_{\O^\e_p}\phi_\varepsilon(t,x)\,dx$.
	Then there exists $C>0$, independent of $\varepsilon$, such that
	\begin{equation}\label{eq:meanbound}
		\|\bar\phi_\varepsilon\|_{L^\infty(0,T)}\le C.
	\end{equation}
\end{lemma}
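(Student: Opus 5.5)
The simplest route uses only the uniform $L^4$ bound \eqref{eq:L4bound} and Hölder's inequality; no evolution equation for the mean is needed. For a.e.\ $t\in[0,T]$ I would estimate
\[
|\bar\phi_\varepsilon(t)|\le \frac{1}{|\O^\e_p|}\int_{\O^\e_p}|\phi_\varepsilon(t)|\,dx
\le |\O^\e_p|^{-1/4}\,\|\phi_\varepsilon(t)\|_{L^4(\O^\e_p)}
\le |\O^\e_p|^{-1/4}\,C_2 .
\]
It then remains to bound $|\O^\e_p|$ from below uniformly in $\varepsilon$.

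For the lower bound on the volume, note that $\O^\e_p\supset\O_\e\setminus\overline{\Omega^\e_s}$, which is a union of the cells $\varepsilon(k+Y_p)$, $k\in K_\varepsilon$. Hence
\[
|\O^\e_p|\ge \theta_p\,|\O_\e| = \theta_p\bigl(|\Omega|-|\Lambda_\e|\bigr).
\]
Since $|\Lambda_\e|\to0$, for $\varepsilon<\varepsilon_0$ (shrinking $\varepsilon_0$ if necessary) this gives $|\O^\e_p|\ge \tfrac12\theta_p|\Omega|$. A cruder bound, $|\O^\e_p|\ge \theta_p|\Omega|-C\varepsilon$, also suffices. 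Consequently
\[
\|\bar\phi_\varepsilon\|_{L^\infty(0,T)}\le (\theta_p|\Omega|/2)^{-1/4}C_2 .
\]

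The only subtle points are the following.

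\emph{(a) Validity of the $L^4$ bound.} The bound \eqref{eq:L4bound} must hold for the weak solution itself. It is derived at the Galerkin level and passes to the limit by weak-$*$ lower semicontinuity in $L^\infty(0,T;L^4)$, giving an a.e.-in-$t$ bound. That is enough for an $L^\infty(0,T)$ statement, and continuity of $t\mapsto\bar\phi_\varepsilon(t)$, inherited from $C([0,T];L^2)$, extends it to every $t$.

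\emph{(b) Uniform volume lower bound.} I expect the lower bound on $|\O^\e_p|$ to be the only place needing care. It follows from the geometric construction in Section~\ref{sec:geometry}.

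As a backup argument, one could test \eqref{eq:weak1} with $\zeta=1$ to get $\frac{d}{dt}\int\phi_\varepsilon=-\int G(\phi_\varepsilon)$. Then $|\int G(\phi_\varepsilon)|\le C_G\|\phi_\varepsilon\|_{L^1}$, and Gronwall together with $\|\phi^0_\varepsilon\|_{L^2}\le C$ again yields a uniform bound. The Hölder argument above is the one I would present.
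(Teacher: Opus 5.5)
Your argument is correct, and it takes a different and shorter route than the paper. The paper tests the phase equation with $\zeta=1$ to get $\frac{d}{dt}\bar\phi_\varepsilon+\overline{G(\phi_\varepsilon)}=0$ and splits $\overline{G(\phi_\varepsilon)}=G(\bar\phi_\varepsilon)+r_\varepsilon$. It bounds $r_\varepsilon$ using the Lipschitz constant of $G$, the uniform Poincar\'e--Wirtinger inequality of Remark~\ref{rem:frame}, and the gradient bound \eqref{eq:gradbound}. It then closes with the sign condition $G(s)\,\mathrm{sgn}(s)\ge c_G|s|$ and Gronwall.

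You instead observe that the lemma follows directly from the $L^4$ bound \eqref{eq:L4bound}, together with H\"older and a volume lower bound. The paper derives \eqref{eq:L4bound} just before the lemma, from coercivity (ii) and the energy inequality, without any reference to the mean, so there is no circularity. Your route needs neither the mean ODE, nor the monotonicity of $G$, nor the uniform Poincar\'e--Wirtinger inequality. It also makes Step~1 of Lemma~\ref{lem:energy} immediate, since $\|\phi_\varepsilon(t)\|_{L^2(\Omega_p^\varepsilon)}\le|\Omega|^{1/4}\|\phi_\varepsilon(t)\|_{L^4(\Omega_p^\varepsilon)}$.

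What the paper's route buys is independence from the quartic coercivity (ii). It uses only the gradient bound, which rests on the lower bound (i) of $F$, so it would survive for potentials that are merely bounded below. It also exhibits the exponential damping of the mean by the monotone source. Your volume bound is fine, and it in fact holds for every $\varepsilon$ without shrinking $\varepsilon_0$. The inclusions sit only in the complete cells, so the solid part has measure $(1-\theta_p)|\Omega_\varepsilon|\le(1-\theta_p)|\Omega|$, which gives $|\Omega_p^\varepsilon|\ge\theta_p|\Omega|$.

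Your backup argument, as written, does not close. The inequality $\bigl|\frac{d}{dt}\int_{\Omega_p^\varepsilon}\phi_\varepsilon\,dx\bigr|\le C_G\|\phi_\varepsilon\|_{L^1(\Omega_p^\varepsilon)}$ cannot be fed into Gronwall, because $\|\phi_\varepsilon\|_{L^1}$ is not controlled by $\bigl|\int\phi_\varepsilon\,dx\bigr|$. You would need an independent bound: either Poincar\'e--Wirtinger plus \eqref{eq:gradbound}, which is essentially the paper's proof, or \eqref{eq:L4bound}, which makes the ODE superfluous. Since you present the H\"older argument, this does not affect the proposal.
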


\begin{proof}
	Taking the mean of equation \eqref{eq:strong}$_1$ over $\O^\e_p$,
	and using
	$\int_{\O^\e_p}\Delta\mu_\varepsilon\,dx
	=\int_{\partial\O^\e_p}\nabla\mu_\varepsilon\cdot\bn_\varepsilon\,dS=0$
	(homogeneous Neumann condition on $\mu_\varepsilon$), we obtain
	\[
	\frac{d}{dt}\bar\phi_\varepsilon(t)+\overline{G(\phi_\varepsilon)}(t)=0,
	\qquad \overline{G(\phi_\varepsilon)}(t):=\frac{1}{|\O^\e_p|}\int_{\O^\e_p}G(\phi_\varepsilon(t,x))\,dx.
	\]
	Write $\overline{G(\phi_\varepsilon)}=G(\bar\phi_\varepsilon)+r_\varepsilon$,
	where
	\[
	r_\varepsilon(t):=\frac{1}{|\O^\e_p|}\int_{\O^\e_p}
	\bigl[G(\phi_\varepsilon)-G(\bar\phi_\varepsilon)\bigr]\,dx,
	\]
	so that
	\begin{equation}\label{eq:meanODE}
		\frac{d}{dt}\bar\phi_\varepsilon(t)+G(\bar\phi_\varepsilon(t))=-r_\varepsilon(t).
	\end{equation}
	Since $G$ is Lipschitz with constant $C_G$ and $\phi_\varepsilon-\bar\phi_\varepsilon$
	has zero mean, Cauchy--Schwarz and the Poincar\'e--Wirtinger inequality on
	$\O^\e_p$ (with constant $C_P$ independent of $\varepsilon$,
	Remark~\ref{rem:frame}) give
	\begin{equation}\label{eq:rbound}
		|r_\varepsilon(t)|
		\le \frac{C_G}{|\O^\e_p|^{1/2}}\|\phi_\varepsilon(t)-\bar\phi_\varepsilon(t)\|_{L^2(\O^\e_p)}
		\le \frac{C_GC_P}{|\O^\e_p|^{1/2}}\|\nabla\phi_\varepsilon(t)\|_{L^2(\O^\e_p)}.
	\end{equation}
	Since $|\O^\e_p|\to\theta_p|\Omega|>0$, we have
	$|\O^\e_p|\ge\tfrac12\theta_p|\Omega|$ for $\varepsilon$ small
	enough; together with \eqref{eq:gradbound} this gives
	\[
	\sup_{t\in[0,T]}|r_\varepsilon(t)|\le C_1':=C\,C_1.
	\]
	By Lemma~\ref{lem:Gstructure}(ii), $G(s)\mathrm{sgn}(s)\ge c_G|s|$ for all
	$s\in\mathbb{R}$. Multiplying \eqref{eq:meanODE} by
	$\mathrm{sgn}(\bar\phi_\varepsilon(t))$ gives, for a.e.\ $t$,
	\[
	\frac{d}{dt}|\bar\phi_\varepsilon(t)| \le -c_G|\bar\phi_\varepsilon(t)|+|r_\varepsilon(t)|.
	\]
	By Gronwall's inequality, for all $t\in[0,T]$,
	\[
	|\bar\phi_\varepsilon(t)|
	\le |\bar\phi_\varepsilon(0)|\,e^{-c_Gt}
	+\int_0^te^{-c_G(t-s)}|r_\varepsilon(s)|\,ds
	\le |\bar\phi_\varepsilon(0)|+\frac{C_1'}{c_G}.
	\]
	Finally, by Cauchy--Schwarz and Assumption~\ref{ass:init},
	\[
	|\bar\phi_\varepsilon(0)|\le \frac{1}{|\O^\e_p|^{1/2}}\|\phi_\varepsilon^0\|_{L^2(\O^\e_p)}\le C.
	\]
	Combining the last two displays gives \eqref{eq:meanbound} with
	$C=C+C_1'/c_G$, independent of $\varepsilon$.
\end{proof}

\subsection{Existence for fixed \(\eps\)}

\begin{theorem}[Existence of microscopic weak solutions]
	\label{thm:existence}
	Let Assumptions \ref{ass:F}, \ref{ass:G}, and \ref{ass:init} hold. Then, for
	every \(\eps>0\), problem \eqref{eq:strong} admits a weak solution
	\((\phie,\mue)\) in the sense of Definition~\ref{def:weak}.
\end{theorem}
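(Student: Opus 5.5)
We argue by a Faedo--Galerkin scheme at fixed $\eps$. Let $\{w_j\}_{j\ge1}$ be the eigenfunctions of the Neumann Laplacian $-\Delta$ on $\O^\e_p$, which is a bounded Lipschitz domain. They form an orthonormal basis of $L^2(\O^\e_p)$ and an orthogonal basis of $H^1(\O^\e_p)$, and $w_1$ is constant. Set $V_n=\mathrm{span}\{w_1,\dots,w_n\}$ and let $P_n$ be the $L^2$-projection onto $V_n$. We seek
\[
\phi^{(n)}(t)=\sum_{j=1}^n a_j(t)w_j,\qquad \mu^{(n)}(t)=\sum_{j=1}^n b_j(t)w_j
\]
solving \eqref{eq:weak1}--\eqref{eq:weak2} for test functions in $V_n$, with $\phi^{(n)}(0)=P_n\phi_\eps^0$. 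Equation \eqref{eq:weak2} determines the $b_j$ algebraically from the $a_j$. Substituting, we get an ODE system for $a=(a_j)$ whose right-hand side is locally Lipschitz, since $F'\in C^1$, $G\in C^1$, and all norms on $V_n$ are equivalent. Carath\'eodory/Picard then gives a local solution. Because $P_n$ is $H^1$-stable for this basis, we have $\|P_n\phi^0_\eps\|_{H^1}\le\|\phi^0_\eps\|_{H^1}$.

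\textbf{A priori estimates.} Test with $\zeta=\mu^{(n)}$, $\eta=\partial_t\phi^{(n)}$ and $\eta=G(\phi^{(n)})$. The last one is not in $V_n$, so instead we use $\eta=P_nG(\phi^{(n)})$ in the $\mu$-equation. Here lies the main obstacle: Lemma~\ref{lem:Gdissipation-weak} is not directly available at the discrete level. To get around it, we avoid that lemma and treat $\int G(\phi^{(n)})\mu^{(n)}$ crudely. Split $\mu^{(n)}=\bar\mu^{(n)}+(\mu^{(n)}-\bar\mu^{(n)})$ and apply Poincar\'e--Wirtinger to the zero-mean part. Testing \eqref{eq:weak2} with $\eta=w_1$ gives $|\bar\mu^{(n)}|\le C\int|F'(\phi^{(n)})|\le C(1+\|\phi^{(n)}\|_{L^3}^3)$. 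This is controlled by $\|\phi^{(n)}\|_{H^1}^3$ via Sobolev embedding, and that growth is too strong for a Gronwall argument.

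So we instead keep the good structure. Note that $\int G(\phi^{(n)})\mu^{(n)}=\int P_nG(\phi^{(n)})\,\mu^{(n)}$ because $\mu^{(n)}\in V_n$, and then use \eqref{eq:weak2} with $\eta=P_nG(\phi^{(n)})$. The nonprojected structure can be recovered by a cleaner choice of approximation: add a viscous regularization $\delta\partial_t\phi$ or work with a time-discretized (implicit Euler / minimizing movement) scheme. There \eqref{eq:weak2} holds in all of $H^1(\O^\e_p)$, so $\eta=G(\phi)$ is admissible and Lemma~\ref{lem:Gdissipation-weak} applies verbatim. Concretely, I would first try the implicit Euler scheme. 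At each step $\phi^k$ solves an elliptic problem obtained as the Euler--Lagrange equation of a coercive functional (convexify $F$ by adding $\frac{1}{2\tau}\|\phi-\phi^{k-1}\|_{H^{-1}}^2$-type terms, using the mean ODE for the nonconservative part). Existence at each step follows from the direct method using coercivity (ii). The discrete energy inequality analogous to \eqref{eq:energyineq} follows from convexity-splitting of $F$ with a $\tau$-small error. Together with Lemma~\ref{lem:meanbound}, this yields, uniformly in $n$ (or $\tau$), the bounds \eqref{eq:uniformbound1} and a bound on $\bar\phi$, hence $\phi$ bounded in $L^\infty(0,T;H^1)$. It also gives $\mu$ bounded in $L^2(0,T;H^1)$, where the mean of $\mu$ is bounded via $\int F'(\phi)$ and $H^1\hookrightarrow L^6$. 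Finally, $\partial_t\phi=\Delta\mu-G(\phi)$ is bounded in $L^2(0,T;H^1(\O^\e_p)')$.

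\textbf{Passage to the limit.} By Aubin--Lions, with $H^1\Subset L^2\subset (H^1)'$, a subsequence gives $\phi^{(n)}\to\phi_\eps$ strongly in $L^2(0,T;L^2)$ and a.e., with weak(-$*$) convergence in the energy spaces, and $\mu^{(n)}\rightharpoonup\mu_\eps$ in $L^2(0,T;H^1)$. Since $F'(\phi^{(n)})$ is bounded in $L^\infty(0,T;L^2)$ by the cubic growth and the $L^6$ bound, a.e. convergence plus Vitali gives $F'(\phi^{(n)})\to F'(\phi_\eps)$ in $L^1$, and weakly in $L^2$. Likewise $G(\phi^{(n)})\to G(\phi_\eps)$ strongly in $L^2$ since $G$ is Lipschitz. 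This lets us pass to the limit in \eqref{eq:weak1}--\eqref{eq:weak2}, first for fixed test functions in $V_m$ and then by density. The initial condition follows from the continuity in Remark~\ref{rem:weaksense} together with $P_n\phi^0_\eps\to\phi^0_\eps$. The energy inequality passes to the limit by lower semicontinuity, as in the remark after \eqref{eq:energyident}.
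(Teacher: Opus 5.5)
Your set-up follows the paper's route: Faedo--Galerkin on Neumann eigenfunctions, $n$-uniform bounds, Aubin--Lions, and strong convergence of $F'(\phi^{(n)})$ and $G(\phi^{(n)})$. You are also right that $\eta=G(\phi^{(n)})\notin V_n$, so Lemma~\ref{lem:Gdissipation-weak} is not available at the discrete level; the paper's sketch does not address this.

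The gap is that you then never establish an $n$-uniform a priori bound for any scheme, because you abandon Galerkin on a false premise. Estimating $\bar\mu^{(n)}$ alone by $\|\phi^{(n)}\|_{H^1}^3$ is the wrong move. The mean part should be paired with $\int G(\phi^{(n)})$ and controlled by the quartic coercivity (ii):
\[
\Bigl|\bar\mu^{(n)}\int_{\Omega_p^\varepsilon}G(\phi^{(n)})\,dx\Bigr|
\le C\bigl(1+\|\phi^{(n)}\|_{L^3}^3\bigr)\|\phi^{(n)}\|_{L^1}
\le C\bigl(1+\|\phi^{(n)}\|_{L^4}^4\bigr).
\]
Here $\bar\mu^{(n)}=|\Omega_p^\varepsilon|^{-1}\int F'(\phi^{(n)})$ comes from testing with the constant $w_1$. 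The zero-mean part is absorbed: $|\int G(\phi^{(n)})(\mu^{(n)}-\bar\mu^{(n)})|\le\tfrac12\|\nabla\mu^{(n)}\|_{L^2}^2+C\|\phi^{(n)}\|_{L^2}^2$, and $\|\phi^{(n)}\|_{L^2}^2\le C(1+\|\phi^{(n)}\|_{L^4}^4)$. By (ii), $\|\phi^{(n)}\|_{L^4}^4\le c_0^{-1}(E(\phi^{(n)})+C_0|\Omega|)$, and by (i), $\mathcal E:=E(\phi^{(n)})+c_3|\Omega|\ge\tfrac12\|\nabla\phi^{(n)}\|_{L^2}^2$. The exact Galerkin energy identity then yields
\[
\frac{d}{dt}\mathcal E+\frac12\|\nabla\mu^{(n)}\|_{L^2}^2\le C\,(1+\mathcal E).
\]
Gronwall gives global solvability of the ODE and all the bounds you list, uniformly in $n$; for this theorem nothing needs to be uniform in $\varepsilon$. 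With this estimate your passage to the limit works essentially as written. The $\varepsilon$-uniform, Gronwall-free bound can be recovered afterwards for the limit, where $\eta=G(\phi_\varepsilon(t))$ is admissible in \eqref{eq:weak2}.

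The implicit-Euler detour, as sketched, does not fill the gap:
\begin{itemize}
\item For nonlinear, nonconservative $G$, the step $\phi^k-\tau\Delta\mu^k+\tau G(\phi^k)=\phi^{k-1}$ has no evident minimizing-movement structure. The mean of $\phi^k$ is itself an unknown, and $G(\phi)$ is not the $H^{-1}$-gradient of a functional. So ``existence by the direct method'' is unsupported, and a fixed-point argument would be needed.
\item Convexity splitting requires $F''\ge-\lambda$, which Assumption~\ref{ass:F} does not provide. Only $|F''(s)|\le C(1+s^2)$ is assumed, and this is compatible with (ii)--(iii) together with $\inf F''=-\infty$. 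So the discrete energy inequality is not available in general.
\item Your compactness and limit step is written for the Galerkin iterates $\phi^{(n)}$, not for time interpolants of $\phi^k$.
\end{itemize}
The viscous regularization only relocates the problem, since its own existence proof is again by Galerkin.
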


\begin{proof}[Sketch of the proof]
	For fixed $\varepsilon>0$, $\Omega_p^\varepsilon$ is a fixed bounded
	Lipschitz domain and \eqref{eq:strong} is the classical Cahn--Hilliard
	system with homogeneous Neumann conditions, regular potential $F$
	(Assumption~\ref{ass:F}), and globally Lipschitz source term $G$
	(Assumption~\ref{ass:G}). Existence of a global weak solution follows
	by the Faedo--Galerkin method: project onto finite-dimensional eigenspaces
	of the Neumann Laplacian, derive the uniform bound
	\eqref{eq:energy-estimate} at the Galerkin level (where
	\eqref{eq:energyident} holds exactly), and pass to the limit using
	weak-$\ast$ compactness in $L^\infty(0,T;H^1(\Omega_p^\varepsilon))$,
	weak compactness in $L^2(0,T;H^1(\Omega_p^\varepsilon))$, and strong
	compactness in $L^2(0,T;L^4(\Omega_p^\varepsilon))$ via the
	Aubin--Lions--Simon lemma ($d\le3$). The nonlinear terms
	$F'(\phi_\varepsilon^n)$ and $G(\phi_\varepsilon^n)$ converge strongly
	in $L^2(0,T;L^{4/3})$ and $L^2(0,T;L^2)$ respectively, by the growth
	bound on $F''$, the Lipschitz bound on $G$, and the $L^4$-strong
	convergence; see~\cite{Elliott1989,ElliottZheng1986,
		NicolaenkoScheurerTemam1989} for the full argument without the source
	term, which is handled identically.
\end{proof}

\subsection{Uniform estimates}

\begin{lemma}[Uniform energy estimate]
	\label{lem:energy}
	Let Assumptions \ref{ass:F}, \ref{ass:G}, and \ref{ass:init} hold, and let
	\((\phie,\mue)\) be a weak solution of \eqref{eq:strong} given by
	Theorem~\ref{thm:existence}. Then there exists \(C>0\), independent of
	\(\eps\), such that
	\begin{equation}
		\label{eq:energy-estimate}
		\norm{\phie}{L^\infty(0,T;H^1(\Omeps))}
		+
		\norm{\mue}{L^2(0,T;H^1(\Omeps))}
		+
		\norm{\partial_t\phie}{L^2(0,T;H^1(\Omeps)')}
		\leq C.
	\end{equation}
\end{lemma}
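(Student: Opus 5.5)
The estimate assembles bounds already derived, working at the Galerkin level and passing to the limit by weak lower semicontinuity. There are three pieces, taken in order: the $L^\infty(0,T;H^1)$ bound on $\phi_\varepsilon$, the $L^2(0,T;H^1)$ bound on $\mu_\varepsilon$, and the dual bound on $\partial_t\phi_\varepsilon$.

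\textbf{Step 1: $\phi_\varepsilon$ in $L^\infty(0,T;H^1(\Omega_p^\varepsilon))$.} The gradient part is \eqref{eq:gradbound}. For the $L^2$ part, I would use Hölder on the bounded set $\Omega_p^\varepsilon$: $\|\phi_\varepsilon(t)\|_{L^2}\le|\Omega|^{1/4}\|\phi_\varepsilon(t)\|_{L^4}$, which is controlled by \eqref{eq:L4bound}. An alternative route is the uniform Poincaré--Wirtinger inequality of Remark~\ref{rem:frame} together with Lemma~\ref{lem:meanbound}.

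\textbf{Step 2: $\mu_\varepsilon$ in $L^2(0,T;H^1(\Omega_p^\varepsilon))$.} The gradient is controlled in $L^2$ by \eqref{eq:energy-int}, so only the mean $\bar\mu_\varepsilon(t)$ remains. I would test \eqref{eq:weak2} with $\eta\equiv1$, which gives
\[
|\Omega_p^\varepsilon|\,\bar\mu_\varepsilon=\int_{\Omega_p^\varepsilon}F'(\phi_\varepsilon)\,dx .
\]
By the growth condition $|F'(s)|\le C(1+|s|^3)$, the right-hand side is bounded by $C(1+\|\phi_\varepsilon\|_{L^3}^3)$. Since $L^4\subset L^3$ on bounded sets, \eqref{eq:L4bound} makes this uniform in $t$ and $\varepsilon$. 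Using $|\Omega_p^\varepsilon|\ge\frac12\theta_p|\Omega|$, we get $\|\bar\mu_\varepsilon\|_{L^\infty(0,T)}\le C$. Poincaré--Wirtinger then gives $\|\mu_\varepsilon-\bar\mu_\varepsilon\|_{L^2}\le C_P\|\nabla\mu_\varepsilon\|_{L^2}$, and integrating in time yields the $L^2(0,T;H^1)$ bound.

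\textbf{Step 3: $\partial_t\phi_\varepsilon$ in $L^2(0,T;H^1(\Omega_p^\varepsilon)')$.} From \eqref{eq:weak1}, for any $\zeta\in H^1(\Omega_p^\varepsilon)$ and a.e.\ $t$,
\[
|\langle\partial_t\phi_\varepsilon,\zeta\rangle|\le\bigl(\|\nabla\mu_\varepsilon\|_{L^2}+C_G\|\phi_\varepsilon\|_{L^2}\bigr)\|\zeta\|_{H^1}.
\]
Squaring and integrating in time, Steps 1--2 give the claim.

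The main obstacle is keeping every constant $\varepsilon$-independent. The genuinely geometric inputs are the uniform Poincaré--Wirtinger constant and the lower bound on $|\Omega_p^\varepsilon|$, both taken from Remark~\ref{rem:frame} and the proof of Lemma~\ref{lem:meanbound}. There is also a justification issue: the energy bounds are derived for Galerkin approximants, so they must be transferred to the weak solution by lower semicontinuity of the norms involved, as explained in the remark following \eqref{eq:energyident}.
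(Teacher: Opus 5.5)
Your proposal is correct and follows the paper's proof essentially step for step. Steps~2 and~3 coincide with the paper's, and for Step~1 the paper uses exactly the alternative you mention (Lemma~\ref{lem:meanbound} together with the uniform Poincar\'e--Wirtinger inequality), while your primary route, H\"older's inequality applied to \eqref{eq:L4bound}, is an equally valid and slightly more economical shortcut, since the quartic coercivity behind \eqref{eq:L4bound} is needed anyway to bound $\bar\mu_\varepsilon$ in Step~2.
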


\begin{proof}
	\textbf{Step 1: $\phie\in L^\infty(0,T;H^1(\Omeps))$.}
	The gradient bound \eqref{eq:gradbound} gives
	$\|\nabla\phie\|_{L^\infty(0,T;L^2(\Omeps))}\le C_1$, and
	Lemma~\ref{lem:meanbound} gives $\|\bar\phie\|_{L^\infty(0,T)}\le C$.
	The Poincar\'e--Wirtinger inequality on $\Omeps$ (Remark~\ref{rem:frame}) then yields
	\[
	\|\phie(t)\|_{L^2(\Omeps)}
	\le C_P\|\nabla\phie(t)\|_{L^2(\Omeps)}+|\bar\phie(t)|\,|\Omeps|^{1/2}
	\le C,
	\]
	uniformly in $t$ and $\varepsilon$. Together with \eqref{eq:gradbound} this
	gives $\phie\in L^\infty(0,T;H^1(\Omeps))$.
	
	\smallskip
	\textbf{Step 2: $\mue\in L^2(0,T;H^1(\Omeps))$.}
	The gradient bound $\|\nabla\mue\|_{L^2(0,T;L^2(\Omeps))}\le C$ is already
	in \eqref{eq:uniformbound1}. For the mean, taking $\eta=1$ in \eqref{eq:weak2}
	gives $\bar\mue(t)=|\Omeps|^{-1}\int_{\Omeps}F'(\phie(t))\,dx$. By
	Assumption~\ref{ass:F} and the uniform $L^4$-bound \eqref{eq:L4bound},
	$|\bar\mue(t)|\le C(1+\|\phie(t)\|^3_{L^4(\Omeps)})\le C$
	uniformly in $t$ and $\varepsilon$.
	The Poincar\'e--Wirtinger inequality then gives
	$\|\mue\|_{L^2(0,T;L^2(\Omeps))}\le C$,
	hence $\mue\in L^2(0,T;H^1(\Omeps))$.
	
	\smallskip
	\textbf{Step 3: $\partial_t\phie\in L^2(0,T;H^1(\Omeps)')$.}
	For $\zeta\in H^1(\Omeps)$, equation \eqref{eq:weak1} gives
	\[
	\|\partial_t\phie(t)\|_{H^1(\Omeps)'}
	\le \|\nabla\mue(t)\|_{L^2(\Omeps)}+C_G\|\phie(t)\|_{L^2(\Omeps)}.
	\]
	Squaring, integrating over $(0,T)$, and using Steps~1--2 gives
	$\partial_t\phie\in L^2(0,T;H^1(\Omeps)')$, with norm bounded independently
	of $\varepsilon$. Combining Steps~1--3 gives \eqref{eq:energy-estimate}.
\end{proof}

\section{Two-scale limit and homogenized system}
\label{sec:homogenization}
\subsection{Unfolding operator for a perforated domain}\label{SSec61}
The main tool for homogenization in the perforated domain $\Omega_p^\varepsilon$
and for the derivation of the two-scale limit system is the periodic unfolding
operator adapted to perforated media, denoted by $\mathcal T_\varepsilon^\ast$.
It was first introduced in \cite{CDG02} and further developed in
\cite{CD06,CDG08,CDG08+}. For a detailed presentation we refer to
\cite[Chapter~4]{CDG18}.
We recall the definition of the periodic unfolding operators
$\mathcal T_\varepsilon$ and $\mathcal T_\varepsilon^\ast$ for functions defined
on $\Omega$ and $\Omega_p^\varepsilon$, respectively.
For \(x\in\Om\), write
\[
x=\eps\left[\frac{x}{\eps}\right]+\eps\left\{\frac{x}{\eps}\right\},
\]
where \([\cdot]\in\mathbb Z^d\) and \(\{\cdot\}\in Y\).

\begin{definition}
	\label{def:uo}
	Let $Q_T = (0,T)\times\Omega$. For every measurable function
	$\psi$ on $Q_T$ the unfolding operator
	$\mathcal T_\varepsilon : L^1(Q_T)\to L^1((0,T)\X\Omega\times Y)$ is
	defined by
	\[
	\mathcal T_\varepsilon(\psi) (t,x,y)
	\doteq
	\begin{cases}
		\psi \bigl(t,\varepsilon\bigl[\tfrac{x}{\varepsilon}\bigr] + \varepsilon y\bigr),
		& \text{for a.e. } (t,x,y)\in (0,T)\X\Omega_\varepsilon\times Y,\\[4pt]
		0, & \text{for a.e. } (t,x,y)\in (0,T)\X\Lambda_\varepsilon\times Y.
	\end{cases}
	\]
	For every measurable function $\psi$ on $(0,T)\X\Omega_p^\varepsilon$ the
	perforated unfolding operator
	$\mathcal T_\varepsilon^\ast : L^1((0,T)\X\Omega_p^\varepsilon)
	\to L^1((0,T)\X\Omega\times Y_p)$ is defined by
	\[
	\mathcal T_\varepsilon^\ast(\psi) (t,x,y)
	\doteq
	\begin{cases}
		\psi \bigl(t,\varepsilon\bigl[\tfrac{x}{\varepsilon}\bigr] + \varepsilon y\bigr),
		& \text{for a.e. } (t,x,y)\in (0,T)\X\Omega_\varepsilon\times Y_p,\\[4pt]
		0, & \text{for a.e. } (t,x,y)\in (0,T)\X\Lambda_\varepsilon\times Y_p.
	\end{cases}
	\]
\end{definition}

In particular, let $\psi$ be a measurable function defined on
$(0,T)\X\Omega_p^\varepsilon$, and let $\widetilde\psi$ denote its zero-extension
to $(0,T)\X\Omega$ (i.e.\ $\widetilde\psi=\psi$ on $(0,T)\X\Omega_p^\varepsilon$
and $\widetilde\psi=0$ on $(0,T)\X(\Omega\setminus\Omega_p^\varepsilon)$). Then
\[
\Te^\ast(\psi)=\Te(\widetilde\psi)_{|(0,T)\X\O\X Y_p}.
\]
\textbf{Unfolding criterion for integrals in $\Omega_p^\varepsilon$ (u.c.i.).}
For every $\psi\in L^1((0,T)\X\Omega_p^\varepsilon)$ one has
\begin{equation*}
	\int_{Q_T\times Y_p}
	\mathcal T_\varepsilon^\ast(\psi)(t,x,y)\,\md(y,x,t)
	=
	\int_{(0,T)\X\O_p^\varepsilon}\psi(t,x)\,\md(x,t)
	-
	\int_{(0,T)\X\Lambda_\varepsilon}\psi(t,x)\,\md(x,t).
\end{equation*}
Since $\Omega$ has Lipschitz boundary, we have $|\Lambda_\varepsilon|\to 0$ as
$\varepsilon\to 0$. Hence (for $\e$ independent $\psi$)
\[
\left|\int_{(0,T)\X\Lambda_\varepsilon}\psi(t,x)\,\md(x,t)\right|
\longrightarrow 0
\qquad\text{as }\varepsilon\to0,
\]
and therefore
\begin{equation}\label{EQUCI}
	\left|
	\int_{Q_T\times Y_p}
	\mathcal T_\varepsilon^\ast(\psi)(t,x,y)\,\md(y,x,t)
	-
	\int_{(0,T)\X\O_p^\varepsilon}\psi(t,x)\,\md(x,t)\right|
	\longrightarrow 0
	\qquad\text{as }\varepsilon\to0.
\end{equation}

If \(\{\psi_\varepsilon\}_\e\) is bounded in \(L^p((0,T)\times\Omega)\) for some \(p>1\), then
\begin{equation}\label{eq:UCI}
	\left|\int_{(0,T)\times\Lambda_\varepsilon}\psi_\varepsilon\,dxdt\right|
	\leq
	|\Lambda_\varepsilon|^{1/p'}\|\psi_\varepsilon\|_{L^p((0,T)\times\Omega)}
	\to0.
\end{equation}
Finally, the unfolding operators preserve products in the expected way: if
$u, v : (0,T)\X\Omega_p^\varepsilon\to\mathbb R$ are measurable, then
\[
\mathcal T_\varepsilon^\ast(u\,v)
= \mathcal T_\varepsilon^\ast(u)\,\mathcal T^\ast_\varepsilon(v)
\qquad\text{a.e. in } (0,T)\X\Omega\times Y_p.
\]
If in addition $H:\R\to\R$ is continuous, then
\[
\mathcal T_\varepsilon^\ast(H(u))
= H\bigl(\mathcal T_\varepsilon^\ast(u)\bigr)
\qquad\text{a.e. in } (0,T)\X\Omega^\varepsilon\times Y_p;
\]
if moreover $H(0)=0$, this identity extends to a.e.\ $(0,T)\X\Omega\times Y_p$,
since both sides vanish on $(0,T)\X\Lambda_\varepsilon\times Y_p$. This applies
in particular to $H=G$ by Assumption~\ref{ass:G}.
\begin{theorem}[Unfolding compactness with corrector decomposition]\label{thm:unfolding-compactness}
	Let $(v_\varepsilon)\subset H^1(\Omega)$ with $\sup_\varepsilon\|v_\varepsilon\|_{H^1(\Omega)}\le C$.
	Then there exist $v\in H^1(\Omega)$ and $v_1\in L^2(\Omega;H^1_{\rm per}(Y))$ such that, along a
	subsequence,
	\[
	v_\varepsilon\rightharpoonup v \text{ weakly in } H^1(\Omega),
	\qquad
	\mathcal T_\varepsilon(v_\varepsilon)\to  v \quad\text{strongly in } L^2(\Omega\times Y)	,
	\]
	\[
	\mathcal T_\varepsilon(\nabla v_\varepsilon)\rightharpoonup \nabla_x v+\nabla_y v_1
	\quad\text{weakly in } L^2(\Omega\times Y)^d.
	\]
\end{theorem}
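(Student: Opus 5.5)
The plan follows the classical argument of Cioranescu--Damlamian--Griso \cite{CDG08,CDG18}. We proceed in four steps.

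\textbf{Step 1 (weak limits).} Reflexivity of $H^1(\Omega)$ gives a subsequence with $v_\varepsilon\rightharpoonup v$ weakly in $H^1(\Omega)$. By Rellich's theorem ($\Omega$ is bounded and Lipschitz), $v_\varepsilon\to v$ strongly in $L^2(\Omega)$. The unfolding operator is an isometry up to the boundary layer: $\|\mathcal T_\varepsilon(w)\|_{L^2(\Omega\times Y)}\le\|w\|_{L^2(\Omega)}$. Hence $\mathcal T_\varepsilon(v_\varepsilon)$ and $\mathcal T_\varepsilon(\nabla v_\varepsilon)$ are bounded in $L^2(\Omega\times Y)$ and $L^2(\Omega\times Y)^d$, and we extract weakly convergent subsequences with limits $\hat v$ and $\xi$.

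\textbf{Step 2 (strong convergence of $\mathcal T_\varepsilon(v_\varepsilon)$).} We write
\[
\mathcal T_\varepsilon(v_\varepsilon)-v=\mathcal T_\varepsilon(v_\varepsilon-v)+\bigl(\mathcal T_\varepsilon(v)-v\bigr).
\]
The first term tends to zero by the contraction property and the strong $L^2$ convergence. For the second term we use that $\mathcal T_\varepsilon(w)\to w$ strongly in $L^2(\Omega\times Y)$ for each fixed $w\in L^2(\Omega)$. This holds for continuous compactly supported $w$ by uniform continuity, together with $|\Lambda_\varepsilon|\to0$, and extends to all of $L^2(\Omega)$ by density and the uniform bound on $\mathcal T_\varepsilon$. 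Consequently $\hat v=v$ and the convergence is strong.

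\textbf{Step 3 (identification of $\xi$; the main step).} Introduce the oscillating part
\[
Z_\varepsilon:=\frac1\varepsilon\Bigl(\mathcal T_\varepsilon(v_\varepsilon)-\mathcal M_Y(\mathcal T_\varepsilon(v_\varepsilon))\Bigr),
\qquad \mathcal M_Y(\cdot)=\int_Y\cdot\,dy .
\]
Since $\nabla_yZ_\varepsilon=\mathcal T_\varepsilon(\nabla v_\varepsilon)$ on $\Omega_\varepsilon\times Y$, the Poincar\'e--Wirtinger inequality in $Y$ gives a uniform bound on $Z_\varepsilon$ in $L^2(\Omega;H^1(Y))$, so $Z_\varepsilon\rightharpoonup\hat Z$ along a subsequence. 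Next split
\[
\hat Z=y^c\cdot\nabla v+v_1,\qquad y^c:=y-\mathcal M_Y(y).
\]
One then shows that $v_1=\hat Z-y^c\cdot\nabla v$ is $Y$-periodic. This is the step I expect to be hardest. The idea is to compare traces on opposite faces $\{y_i=0\}$ and $\{y_i=1\}$: for $\psi\in C_c^\infty(\Omega\times Y')$,
\[
\int\bigl(Z_\varepsilon(x,y+e_i)-Z_\varepsilon(x,y)\bigr)\psi=\int\frac{v_\varepsilon(x+\varepsilon e_i)-v_\varepsilon(x)}{\varepsilon}\,\mathcal T_\varepsilon\text{-shifted terms},
\]
which, after a change of variables on cells, becomes a difference quotient of $\mathcal T_\varepsilon(v_\varepsilon)$ in $x$. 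In the limit this tends to $\int\partial_{x_i}v\,\psi$. This is precisely the jump of $y^c\cdot\nabla v$ across the face, so the jump of $v_1$ vanishes. The change of variables must be done carefully near $\Lambda_\varepsilon$; using compactly supported $\psi$ and the boundedness of $v_\varepsilon$ in $H^1$ keeps the boundary contributions at $O(\varepsilon)$. With periodicity in hand, $\xi=\nabla_y\hat Z=\nabla_xv+\nabla_yv_1$ with $v_1\in L^2(\Omega;H^1_{\rm per}(Y))$.

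\textbf{Step 4 (uniqueness of the limit).} Since $v$ and $\nabla_xv+\nabla_yv_1$ are identified, all the subsequences can be taken nested, which concludes the proof.
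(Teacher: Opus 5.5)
Your proposal is correct. It is the standard Cioranescu--Damlamian--Griso argument: the oscillating part $Z_\varepsilon$, Poincar\'e--Wirtinger in $Y$, and periodicity of $\hat Z-y^c\cdot\nabla v$ obtained from $x$-difference quotients of traces on opposite faces. This is exactly the argument the paper relies on, since it states this theorem without proof and defers to the unfolding literature \cite{CDG08,CDG18}.

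A few points are worth tightening:
\begin{itemize}
\item The face identity should be stated precisely as $Z_\varepsilon(x,y'+e_i)-Z_\varepsilon(x,y')=\varepsilon^{-1}\bigl[\mathcal T_\varepsilon(v_\varepsilon)(x+\varepsilon e_i,y')-\mathcal T_\varepsilon(v_\varepsilon)(x,y')\bigr]$.
\item With this identity there is no boundary contribution at all once $\varepsilon$ is small relative to the $x$-support of $\psi$, rather than an $O(\varepsilon)$ one.
\item Passing to the limit on the faces uses $\mathcal T_\varepsilon(v_\varepsilon)=\mathcal M_Y(\mathcal T_\varepsilon(v_\varepsilon))+\varepsilon Z_\varepsilon\to v$ strongly in $L^2(\Omega;H^1(Y))$, which you should state explicitly.
\item Step~4 is superfluous, because only subsequential convergence is claimed.
\end{itemize}
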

\subsection{Compactness}
\begin{lemma}[Extension and consistency with the perforated unfolding operator]
	\label{lem:ext-consistency}
	Let
	\[
	\phi_\varepsilon\in
	L^\infty(0,T;H^1(\Omega_p^\varepsilon))
	\cap H^1(0,T;H^1(\Omega_p^\varepsilon)'),
	\qquad
	\mu_\varepsilon\in L^2(0,T;H^1(\Omega_p^\varepsilon))
	\]
	satisfy the uniform bounds \eqref{eq:energy-estimate}. Then there exist
	\[
	\widetilde\phi_\varepsilon\in
	L^\infty(0,T;H^1(\Omega))
	\cap H^1(0,T;H^1(\Omega)'),
	\qquad
	\widetilde\mu_\varepsilon\in L^2(0,T;H^1(\Omega)),
	\]
	with
	\[
	\widetilde\phi_\varepsilon=\phi_\varepsilon,
	\qquad
	\widetilde\mu_\varepsilon=\mu_\varepsilon
	\quad\text{a.e. in }(0,T)\times\Omega_p^\varepsilon,
	\]
	such that
	\[
	\begin{aligned}
		\|\widetilde\phi_\varepsilon\|_{L^\infty(0,T;H^1(\Omega))}
		+
		\|\partial_t\widetilde\phi_\varepsilon\|_{L^2(0,T;H^1(\Omega)')}
		&\le
		C\Bigl(
		\|\phi_\varepsilon\|_{L^\infty(0,T;H^1(\Omega_p^\varepsilon))}
		+
		\|\partial_t\phi_\varepsilon\|_{L^2(0,T;H^1(\Omega_p^\varepsilon)')}
		\Bigr),\\
		\|\widetilde\mu_\varepsilon\|_{L^2(0,T;H^1(\Omega))}
		&\le
		C\|\mu_\varepsilon\|_{L^2(0,T;H^1(\Omega_p^\varepsilon))},
	\end{aligned}
	\]
	with \(C>0\) independent of \(\varepsilon\). Moreover, for a.e.
	\((t,x,y)\in(0,T)\times\Omega\times Y_p\),
	\[
	\mathcal T_\varepsilon^\ast(\phi_\varepsilon)
	=
	\mathcal T_\varepsilon(\widetilde\phi_\varepsilon)
	\big|_{(0,T)\times\Omega\times Y_p},
	\qquad
	\mathcal T_\varepsilon^\ast(\nabla\phi_\varepsilon)
	=
	\mathcal T_\varepsilon(\nabla\widetilde\phi_\varepsilon)
	\big|_{(0,T)\times\Omega\times Y_p},
	\]
	and analogously,
	\[
	\mathcal T_\varepsilon^\ast(\mu_\varepsilon)
	=
	\mathcal T_\varepsilon(\widetilde\mu_\varepsilon)
	\big|_{(0,T)\times\Omega\times Y_p},
	\qquad
	\mathcal T_\varepsilon^\ast(\nabla\mu_\varepsilon)
	=
	\mathcal T_\varepsilon(\nabla\widetilde\mu_\varepsilon)
	\big|_{(0,T)\times\Omega\times Y_p}.
	\]
\end{lemma}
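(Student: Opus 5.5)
The plan is to apply, for a.e.\ $t$, the uniform linear extension operator $P_\varepsilon\colon H^1(\O^\e_p)\to H^1(\Omega)$ of Remark~\ref{rem:frame}, and to set $\widetilde\phi_\varepsilon(t):=P_\varepsilon\phi_\varepsilon(t)$ and $\widetilde\mu_\varepsilon(t):=P_\varepsilon\mu_\varepsilon(t)$. Since $P_\varepsilon$ is linear, bounded uniformly in $\varepsilon$, and independent of $t$, it commutes with Bochner measurability. The spatial bounds then follow directly: $\|\widetilde\phi_\varepsilon\|_{L^\infty(0,T;H^1(\Omega))}\le C\|\phi_\varepsilon\|_{L^\infty(0,T;H^1(\O^\e_p))}$ and $\|\widetilde\mu_\varepsilon\|_{L^2(0,T;H^1(\Omega))}\le C\|\mu_\varepsilon\|_{L^2(0,T;H^1(\O^\e_p))}$. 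The identity $\widetilde\phi_\varepsilon=\phi_\varepsilon$ a.e.\ in $(0,T)\times\O^\e_p$ is the extension property, and the same holds for $\mu_\varepsilon$.

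The time derivative is the main obstacle. $P_\varepsilon$ is defined on $H^1(\O^\e_p)$ only, so $\partial_t\widetilde\phi_\varepsilon=P_\varepsilon\partial_t\phi_\varepsilon$ is not meaningful in general. I would first use the fact that $P_\varepsilon\in\mathcal L(H^1(\O^\e_p),H^1(\Omega))$ commutes with time difference quotients. For $h>0$,
\[
\frac{\widetilde\phi_\varepsilon(t+h)-\widetilde\phi_\varepsilon(t)}{h}=P_\varepsilon\Bigl(\frac{\phi_\varepsilon(t+h)-\phi_\varepsilon(t)}{h}\Bigr).
\]
To control the left side in $H^1(\Omega)'$ by the right side in $H^1(\O^\e_p)'$, one needs $P_\varepsilon$ to extend to a bounded map $H^1(\O^\e_p)'\to H^1(\Omega)'$. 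The natural candidate is the transpose of the restriction $R_\varepsilon\colon H^1(\Omega)\to H^1(\O^\e_p)$, $R_\varepsilon\psi=\psi|_{\O^\e_p}$, which has norm $\le1$. It gives $\langle R_\varepsilon^{*}f,\psi\rangle=\langle f,\psi|_{\O^\e_p}\rangle$ and hence $\|R_\varepsilon^{*}f\|_{H^1(\Omega)'}\le\|f\|_{H^1(\O^\e_p)'}$.

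However, $R_\varepsilon^{*}$ does not coincide with $P_\varepsilon$ on $L^2$ functions. I would therefore not define $\partial_t\widetilde\phi_\varepsilon$ as $P_\varepsilon\partial_t\phi_\varepsilon$. Instead I would control the distributional time derivative of $P_\varepsilon\phi_\varepsilon$ by testing against $\psi\in H^1(\Omega)$:
\[
\langle\partial_t\widetilde\phi_\varepsilon,\psi\rangle=\frac{d}{dt}(P_\varepsilon\phi_\varepsilon,\psi)_{L^2(\Omega)}=\frac{d}{dt}\langle\phi_\varepsilon,P_\varepsilon^{*}\psi\rangle.
\]
Here $P_\varepsilon^{*}$ is the $L^2$-adjoint. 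Bounding this by $\|\partial_t\phi_\varepsilon\|_{H^1(\O^\e_p)'}\|\psi\|_{H^1(\Omega)}$ requires $P_\varepsilon^{*}\colon H^1(\Omega)\to H^1(\O^\e_p)$ to be uniformly bounded. I expect this to hold for the Acerbi--Chiadò Piat--Dal Maso--Percivale/Cioranescu--Donato construction. That construction is local, cell by cell, via reflection/harmonic-type extension into each $\varepsilon(k+Y_s)$, and its adjoint can be written as the identity plus a local integral operator with the same scaling.

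If verifying this adjoint bound proves delicate, the fallback is to choose a different extension for $\phi_\varepsilon$ alone. One option is to use the equation: extend so that the $H^1(\O^\e_p)'$ norm of $\partial_t\phi_\varepsilon$, given by $\nabla\mu_\varepsilon$ and $G(\phi_\varepsilon)$ via \eqref{eq:weak1}, carries over. Another is to accept the weaker time-derivative bound given by $R_\varepsilon^{*}\partial_t\phi_\varepsilon$, which suffices for the later Aubin--Lions argument on $\chi_{\O^\e_p}\widetilde\phi_\varepsilon$.

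The unfolding identities follow from the definitions. For a.e.\ $(x,y)\in\Omega_\varepsilon\times Y_p$, the point $\varepsilon[x/\varepsilon]+\varepsilon y$ lies in $\O^\e_p$, where $\widetilde\phi_\varepsilon=\phi_\varepsilon$ and $\nabla\widetilde\phi_\varepsilon=\nabla\phi_\varepsilon$ a.e. Indeed, gradients of $H^1$ functions agreeing on an open set agree a.e.\ there. On $\Lambda_\varepsilon\times Y_p$ both operators vanish by Definition~\ref{def:uo}. The same reasoning gives the identities for $\mu_\varepsilon$, which completes the proof.
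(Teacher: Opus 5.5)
Your proposal follows the paper's proof. Both set $\widetilde\phi_\varepsilon(t)=P_\varepsilon\phi_\varepsilon(t)$ and $\widetilde\mu_\varepsilon(t)=P_\varepsilon\mu_\varepsilon(t)$. Both identify $\partial_t\widetilde\phi_\varepsilon$ by duality through the $L^2$-adjoint $P_\varepsilon^*$; the paper writes this as $\partial_t\widetilde\phi_\varepsilon=F_\varepsilon(\partial_t\phi_\varepsilon)$ with $F_\varepsilon:=(P_\varepsilon^*|_{H^1(\Omega)})^*$. Both read the unfolding identities off the definitions.

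The uniform bound $\|P_\varepsilon^*\eta\|_{H^1(\Omega_p^\varepsilon)}\le C\|\eta\|_{H^1(\Omega)}$ is the one load-bearing input, and the paper asserts it from the local reflection construction without proof, just as you anticipate. Your fallbacks would not give the time-derivative estimate as stated, so that bound cannot be avoided.
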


\begin{proof}
	Set
	\[
	V_\varepsilon:=H^1(\Omega_p^\varepsilon),
	\qquad
	V:=H^1(\Omega),
	\qquad
	H_\varepsilon:=L^2(\Omega_p^\varepsilon),
	\qquad
	H:=L^2(\Omega).
	\]
	Let
	\[
	J_\varepsilon:H_\varepsilon\to V_\varepsilon',
	\qquad
	J:H\to V'
	\]
	denote the canonical injections associated with the Gelfand triples
	\[
	V_\varepsilon\hookrightarrow H_\varepsilon\hookrightarrow V_\varepsilon',
	\qquad
	V\hookrightarrow H\hookrightarrow V'.
	\]
	
	Let $P_\varepsilon\in\mathcal L(V_\varepsilon,V)$ be the uniform extension operator from
	Remark~\ref{rem:frame}; see~\cite{AcerbiChiadoPiatDalMasoPercivale1992,
		oleinik1992mathematical}. Define its $L^2$-adjoint
	\[
	P_\varepsilon^*: L^2(\Omega)\to L^2(\Omega_p^\varepsilon)
	\]
	by the identity
	\[
	(P_\varepsilon v,\eta)_{H} = (v, P_\varepsilon^*\eta)_{H_\varepsilon}
	\qquad \forall v\in V_\varepsilon,\ \eta\in V,
	\]
	which holds by definition of the Hilbert-space adjoint. Since $P_\varepsilon$
	is constructed by reflection and local averaging near $\Gamma_s^\varepsilon$
	(see~\cite{AcerbiChiadoPiatDalMasoPercivale1992}), its adjoint $P_\varepsilon^*$
	maps $H^1(\Omega)$ to $H^1(\Omega_p^\varepsilon)$ with uniform bound
	\[
	\|P_\varepsilon^*\eta\|_{V_\varepsilon} \le C\|\eta\|_{V}
	\qquad \forall\eta\in V,
	\]
	with $C>0$ independent of $\varepsilon$. We set $P_{\varepsilon,\sharp}:=P_\varepsilon^*|_{H^1(\Omega)}$
	and define $F_\varepsilon:=(P_{\varepsilon,\sharp})^\ast:
	V_\varepsilon'\to V'$
	by
	\[
	\langle F_\varepsilon\xi,\eta\rangle_{V',V}
	:=
	\langle \xi,P_{\varepsilon,\sharp}\eta\rangle_{V_\varepsilon',V_\varepsilon}
	\qquad
	\forall \xi\in V_\varepsilon',\ \eta\in V.
	\]
	Then
	\[
	\|F_\varepsilon\xi\|_{V'}
	\le C\|\xi\|_{V_\varepsilon'}
	\qquad
	\forall \xi\in V_\varepsilon',
	\]
	with \(C>0\) independent of \(\varepsilon\).
	
	For a.e. \(t\in(0,T)\), define
	\[
	\widetilde\phi_\varepsilon(t):=P_\varepsilon\phi_\varepsilon(t),
	\qquad
	\widetilde\mu_\varepsilon(t):=P_\varepsilon\mu_\varepsilon(t).
	\]
	Since \(P_\varepsilon\) is bounded and linear, the maps
	\(t\mapsto\widetilde\phi_\varepsilon(t)\) and
	\(t\mapsto\widetilde\mu_\varepsilon(t)\) are Bochner measurable. Moreover,
	\[
	\|\widetilde\phi_\varepsilon(t)\|_{H^1(\Omega)}
	\le C\|\phi_\varepsilon(t)\|_{H^1(\Omega_p^\varepsilon)}
	\quad\text{for a.e. }t\in(0,T),
	\]
	and
	\[
	\|\widetilde\mu_\varepsilon(t)\|_{H^1(\Omega)}
	\le C\|\mu_\varepsilon(t)\|_{H^1(\Omega_p^\varepsilon)}
	\quad\text{for a.e. }t\in(0,T).
	\]
	Taking the essential supremum in time in the first estimate and integrating the second estimate
	over \((0,T)\), we obtain
	\[
	\|\widetilde\phi_\varepsilon\|_{L^\infty(0,T;H^1(\Omega))}
	\le
	C\|\phi_\varepsilon\|_{L^\infty(0,T;H^1(\Omega_p^\varepsilon))},
	\]
	and
	\[
	\|\widetilde\mu_\varepsilon\|_{L^2(0,T;H^1(\Omega))}
	\le
	C\|\mu_\varepsilon\|_{L^2(0,T;H^1(\Omega_p^\varepsilon))}.
	\]
	
	\noindent
	We now identify the time derivative of the extended phase field. We claim that
	\[
	\partial_t\widetilde\phi_\varepsilon
	=
	F_\varepsilon(\partial_t\phi_\varepsilon)
	\quad\text{in }\mathcal D'(0,T;V').
	\]
	Let \(\psi\in C_c^\infty(0,T;V)\). Since \(P_{\varepsilon,\sharp}\) is independent of time,
	\[
	P_{\varepsilon,\sharp}\psi\in C_c^\infty(0,T;V_\varepsilon),
	\qquad
	\partial_t(P_{\varepsilon,\sharp}\psi)
	=
	P_{\varepsilon,\sharp}(\partial_t\psi).
	\]
	Using the definition of \(F_\varepsilon\), the weak time-derivative identity for
	\(\phi_\varepsilon\), and the defining relation of \(P_{\varepsilon,\sharp}\), we obtain
	\begin{align*}
		\int_0^T
		\langle F_\varepsilon\partial_t\phi_\varepsilon(t),\psi(t)\rangle_{V',V}\,dt
		&=
		\int_0^T
		\langle \partial_t\phi_\varepsilon(t),
		P_{\varepsilon,\sharp}\psi(t)\rangle_{V_\varepsilon',V_\varepsilon}\,dt=
		-\int_0^T
		(\phi_\varepsilon(t),
		\partial_t(P_{\varepsilon,\sharp}\psi)(t))_{H_\varepsilon}\,dt\\
		&=
		-\int_0^T
		(\phi_\varepsilon(t),
		P_{\varepsilon,\sharp}\partial_t\psi(t))_{H_\varepsilon}\,dt=
		-\int_0^T
		(P_\varepsilon\phi_\varepsilon(t),
		\partial_t\psi(t))_{H}\,dt\\
		&=
		-\int_0^T
		(\widetilde\phi_\varepsilon(t),
		\partial_t\psi(t))_{H}\,dt.
	\end{align*}
	Hence
	\[
	\partial_t\widetilde\phi_\varepsilon
	=
	F_\varepsilon(\partial_t\phi_\varepsilon)
	\quad\text{in }\mathcal D'(0,T;V').
	\]
	Since
	\[
	\|F_\varepsilon\xi\|_{V'}
	\le C\|\xi\|_{V_\varepsilon'}
	\qquad
	\forall \xi\in V_\varepsilon',
	\]
	we get
	\[
	\|\partial_t\widetilde\phi_\varepsilon\|_{L^2(0,T;V')}
	\le
	C
	\|\partial_t\phi_\varepsilon\|_{L^2(0,T;V_\varepsilon')}.
	\]
	Therefore
	\[
	\widetilde\phi_\varepsilon\in
	L^\infty(0,T;H^1(\Omega))
	\cap H^1(0,T;H^1(\Omega)'),
	\]
	and the stated estimate for \(\widetilde\phi_\varepsilon\) follows.
	
	\noindent
It remains to prove the consistency with the perforated unfolding operator.
Since
\[
\widetilde\phi_\varepsilon=\phi_\varepsilon,
\qquad
\widetilde\mu_\varepsilon=\mu_\varepsilon
\quad\text{a.e. in }\Omega_p^\varepsilon,
\]
the locality of weak gradients gives the same identities for the gradients.
Hence, for \((t,x,y)\in(0,T)\times\Omega_\varepsilon\times Y_p\), where
\(\varepsilon[x/\varepsilon]+\varepsilon y\in\Omega_p^\varepsilon\), the definitions of
\(\mathcal T_\varepsilon\) and \(\mathcal T_\varepsilon^\ast\) yield
\[
\mathcal T_\varepsilon^\ast(\phi_\varepsilon)
=
\mathcal T_\varepsilon(\widetilde\phi_\varepsilon),
\qquad
\mathcal T_\varepsilon^\ast(\mu_\varepsilon)
=
\mathcal T_\varepsilon(\widetilde\mu_\varepsilon),
\]
and the corresponding identities with \(\nabla\phi_\varepsilon\) and
\(\nabla\mu_\varepsilon\). On
\((0,T)\times\Lambda_\varepsilon\times Y_p\), both sides vanish by definition.
Thus all four identities hold a.e. in \((0,T)\times\Omega\times Y_p\).
This completes the proof.
\end{proof}
\begin{lemma}[Compactness]
	\label{lem:compactness}
	Let $(\phie,\mue)$ satisfy \eqref{eq:energy-estimate}. Then, up to a
	subsequence, there exist
	\[
	\phiz\in L^\infty(0,T;H^1(\Om)),
	\quad
	\muz\in L^2(0,T;H^1(\Om)),
	\quad
	\phi_1,\mu_1\in L^2((0,T)\times\Om;H^1_{\rm per}(\Yp)),
	\]
	such that the following convergences hold: 
	\begin{equation}\label{eq:Con01}
		\begin{aligned}
			\mathcal T^\ast_\eps(\phie)&\to \phiz
			\quad&&\text{strongly in }L^2((0,T)\times\Om\times\Yp),\\
			\mathcal T^\ast_\eps(\mue)&\rightharpoonup \muz
			\quad&&\text{weakly in }L^2((0,T)\times\Om\times\Yp),\\
			\mathcal T^\ast_\eps(\nabla\phie)
			&\rightharpoonup
			\nabla_x\phiz+\nabla_y\phi_1
			\quad&&\text{weakly in }L^2((0,T)\times\Om\times\Yp)^d,\\
			\mathcal T^\ast_\eps(\nabla\mue)
			&\rightharpoonup
			\nabla_x\muz+\nabla_y\mu_1
			\quad&&\text{weakly in }L^2((0,T)\times\Om\times\Yp)^d.
		\end{aligned}
	\end{equation}
\end{lemma}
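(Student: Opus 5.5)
The plan is to move everything to the fixed domain $\Omega$ with the extensions of Lemma~\ref{lem:ext-consistency}, apply the unfolding compactness of Theorem~\ref{thm:unfolding-compactness} (together with an Aubin--Lions argument for the time variable), and then restrict to $Y_p$ using the consistency identities $\mathcal T_\varepsilon^\ast(\cdot)=\mathcal T_\varepsilon(\widetilde{\cdot})|_{Y_p}$.

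\textbf{Step 1: uniform bounds on the extensions.} By Lemma~\ref{lem:ext-consistency} and the energy estimate \eqref{eq:energy-estimate}, the extensions satisfy
\[
\|\widetilde\phi_\varepsilon\|_{L^\infty(0,T;H^1(\Omega))}+\|\partial_t\widetilde\phi_\varepsilon\|_{L^2(0,T;H^1(\Omega)')}+\|\widetilde\mu_\varepsilon\|_{L^2(0,T;H^1(\Omega))}\le C,
\]
uniformly in $\varepsilon$.

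\textbf{Step 2: limits on $\Omega$.} By Aubin--Lions--Simon with $H^1(\Omega)\Subset L^2(\Omega)\hookrightarrow H^1(\Omega)'$, a subsequence satisfies $\widetilde\phi_\varepsilon\to\phi$ strongly in $L^2(0,T;L^2(\Omega))$ (indeed in $C([0,T];L^2)$). Also $\widetilde\phi_\varepsilon\rightharpoonup^\ast\phi$ in $L^\infty(0,T;H^1)$, so $\phi\in L^\infty(0,T;H^1(\Omega))$. Similarly $\widetilde\mu_\varepsilon\rightharpoonup\mu$ weakly in $L^2(0,T;H^1(\Omega))$.

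\textbf{Step 3: unfolded convergences.} I would use the standard time-dependent unfolding facts: $\mathcal T_\varepsilon$ is an isometry up to $\Lambda_\varepsilon$, and $\|\mathcal T_\varepsilon(w)-w\|_{L^2(\Omega\times Y)}\le C\varepsilon\|\nabla w\|_{L^2}+\|w\|_{L^2(\Lambda_\varepsilon)}$. Applied to $w=\widetilde\phi_\varepsilon(t)-\phi(t)$, with the uniform $L^4$ bound controlling the $\Lambda_\varepsilon$ term via $|\Lambda_\varepsilon|^{1/4}$, this gives
\[
\mathcal T_\varepsilon(\widetilde\phi_\varepsilon)\to\phi \quad\text{strongly in } L^2((0,T)\times\Omega\times Y).
\]
For the gradients I would apply the $t$-parametrized version of Theorem~\ref{thm:unfolding-compactness} on $L^2(0,T;H^1)$-bounded sequences. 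This is the standard result of \cite[Ch.~1]{CDG18}, proved by the same scale-splitting argument with $t$ as a parameter. It yields $\widehat\phi_1,\widehat\mu_1\in L^2((0,T)\times\Omega;H^1_{\rm per}(Y))$ with $\mathcal T_\varepsilon(\nabla\widetilde\phi_\varepsilon)\rightharpoonup\nabla_x\phi+\nabla_y\widehat\phi_1$, and likewise for $\mu$. The weak limit of $\mathcal T_\varepsilon(\widetilde\mu_\varepsilon)$ is $\mu$, independent of $y$, because $\|\mathcal T_\varepsilon(w)-M^\varepsilon_Y(w)\|\le C\varepsilon\|\nabla w\|$ and the local averages converge weakly to $\mu$.

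\textbf{Step 4: restriction to $Y_p$.} Restricting to $(0,T)\times\Omega\times Y_p$ and using the identities of Lemma~\ref{lem:ext-consistency} gives all four convergences in \eqref{eq:Con01}, with $\phi_1:=\widehat\phi_1|_{Y_p}$ and $\mu_1:=\widehat\mu_1|_{Y_p}$. These lie in $L^2((0,T)\times\Omega;H^1_{\rm per}(Y_p))$, since restriction preserves periodicity ($\partial Y_s$ does not touch $\partial Y$).

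\textbf{Main obstacle.} The delicate point is time: Theorem~\ref{thm:unfolding-compactness} is stated only for $H^1(\Omega)$-bounded sequences without time dependence. Weak compactness pointwise in $t$ does not give a single subsequence and a measurable corrector. I would therefore redo the decomposition $w=\mathcal Q_\varepsilon w+(w-\mathcal Q_\varepsilon w)$ in $L^2(0,T;H^1)$ directly, where $\varepsilon^{-1}\mathcal T_\varepsilon(w-\mathcal Q_\varepsilon w)$ is bounded in $L^2(0,T;L^2(\Omega;H^1(Y)))$. Its weak limit then supplies the corrector, and its $Y$-periodicity follows from the usual translation argument.
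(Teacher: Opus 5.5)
Your proposal is correct and follows the paper's proof essentially step by step: you extend via Lemma~\ref{lem:ext-consistency}, get the strong limit from Aubin--Lions--Simon, apply unfolding compactness with $t$ as a parameter, and restrict to $Y_p$ using the consistency identities. The differences are minor. You obtain strong convergence of $\mathcal T_\varepsilon(\widetilde\phi_\varepsilon)$ from the estimate on $\mathcal T_\varepsilon(w)-w$, whereas the paper uses $L^2$-stability of $\mathcal T_\varepsilon$ plus $\mathcal T_\varepsilon\phi\to\phi$. You also explicitly address the single-subsequence and measurability issue for the time-dependent corrector, by running the scale-splitting decomposition in $L^2(0,T;H^1)$; the paper passes over this point by invoking Theorem~\ref{thm:unfolding-compactness} ``with $t$ as a Fubini parameter.''
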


\begin{proof}
	The bounds $\|\phi_\varepsilon\|_{L^\infty(0,T;H^1(\Omega_p^\varepsilon))}$,
	$\|\mu_\varepsilon\|_{L^2(0,T;H^1(\Omega_p^\varepsilon))}$, and
	$\|\partial_t\phi_\varepsilon\|_{L^2(0,T;H^1(\Omega_p^\varepsilon)')}$
	are all bounded by $C$ independent of $\varepsilon$, by
	Lemma~\ref{lem:energy}.
	
	\textbf{Step 1: Extension.}
	Lemma~\ref{lem:ext-consistency} provides extensions
	\[
	\widetilde\phi_\varepsilon\in L^\infty(0,T;H^1(\Omega))
	\cap H^1(0,T;H^1(\Omega)'),
	\quad
	\widetilde\mu_\varepsilon\in L^2(0,T;H^1(\Omega)),
	\]
	coinciding with $\phi_\varepsilon$ and $\mu_\varepsilon$ on
	$(0,T)\times\Omega_p^\varepsilon$, with all norms bounded by $C$
	independently of $\varepsilon$.
	
	\textbf{Step 2: Strong compactness for $\widetilde\phi_\varepsilon$.}
	The family $(\widetilde\phi_\varepsilon)_\varepsilon$ is bounded in
	$L^\infty(0,T;H^1(\Omega))\cap H^1(0,T;H^1(\Omega)')$.
	Since $H^1(\Omega)\Subset L^2(\Omega)\hookrightarrow H^1(\Omega)'$,
	the Aubin--Lions--Simon lemma gives, along a subsequence,
	\[
	\widetilde\phi_\varepsilon\to\phi
	\quad\text{strongly in }C([0,T];L^2(\Omega)),
	\qquad
	\widetilde\phi_\varepsilon\overset{\ast}{\rightharpoonup}\phi
	\quad\text{weakly-$\ast$ in }L^\infty(0,T;H^1(\Omega)),
	\]
	for some $\phi\in L^\infty(0,T;H^1(\Omega))$.
	
	\textbf{Step 3: Weak compactness for $\widetilde\mu_\varepsilon$.}
	Boundedness in $L^2(0,T;H^1(\Omega))$ gives, along a further subsequence,
	\[
	\widetilde\mu_\varepsilon\rightharpoonup\mu
	\quad\text{weakly in }L^2(0,T;H^1(\Omega))
	\]
	for some $\mu\in L^2(0,T;H^1(\Omega))$.
	
	\textbf{Step 4: Unfolding and corrector decomposition.}
	Since $\mathcal{T}_\varepsilon: H^1(\Omega)\to L^2(\Omega;H^1(Y))$ is
	bounded and independent of $t$, the families
	$(\mathcal{T}_\varepsilon(\widetilde\phi_\varepsilon))_\varepsilon$ and
	$(\mathcal{T}_\varepsilon(\widetilde\mu_\varepsilon))_\varepsilon$ are
	bounded in $L^\infty((0,T)\times\Omega;H^1(Y))$ and
	$L^2((0,T)\times\Omega;H^1(Y))$ respectively.
	Applying Theorem~\ref{thm:unfolding-compactness} with $t$ as a Fubini
	parameter yields $\phi_1,\mu_1\in L^2((0,T)\times\Omega;H^1_{\per,0}(Y))$
	and, along a single subsequence, the convergences 
		\begin{equation}\label{eq:Con02}
		\begin{aligned}
			\mathcal T_\varepsilon(\widetilde\phi_\varepsilon)&\to\phi
			\quad&&\text{strongly in }L^2((0,T)\times\Omega\times Y),\\
			\mathcal T_\varepsilon(\widetilde\mu_\varepsilon)&\rightharpoonup\mu
			\quad&&\text{weakly in }L^2((0,T)\times\Omega\times Y),\\
			\mathcal T_\varepsilon(\nabla\widetilde\phi_\varepsilon)
			&\rightharpoonup\nabla_x\phi+\nabla_y\phi_1
			\quad&&\text{weakly in }L^2((0,T)\times\Omega\times Y)^d,\\
			\mathcal T_\varepsilon(\nabla\widetilde\mu_\varepsilon)
			&\rightharpoonup\nabla_x\mu+\nabla_y\mu_1
			\quad&&\text{weakly in }L^2((0,T)\times\Omega\times Y)^d.
		\end{aligned}
	\end{equation}
	Strong convergence of $\mathcal{T}_\varepsilon(\widetilde\phi_\varepsilon)$
	follows from the decomposition
	\[
	\mathcal{T}_\varepsilon(\widetilde\phi_\varepsilon)-\phi
	=\mathcal{T}_\varepsilon(\widetilde\phi_\varepsilon-\phi)
	+(\mathcal{T}_\varepsilon\phi-\phi),
	\]
	where the first term vanishes by $L^2$-stability of $\mathcal{T}_\varepsilon$
	and Step~2, and the second by standard unfolding convergence for fixed
	$L^2$-functions.
	
	\textbf{Step 5: Restriction to $Y_p$.}
	By Lemma~\ref{lem:ext-consistency},
	$\mathcal{T}_\varepsilon^\ast(\phi_\varepsilon)
	=\mathcal{T}_\varepsilon(\widetilde\phi_\varepsilon)|_{(0,T)\times\Omega\times Y_p}$
	and analogously for $\nabla\phi_\varepsilon$ and $\mu_\varepsilon$.
	Restricting \eqref{eq:Con02} to $(0,T)\times\Omega\times Y_p$ gives
	\eqref{eq:Con01}, with $\phi_1,\mu_1$ identified as the restrictions to
	$Y_p$ of the $Y$-periodic correctors from Step~4.
\end{proof}

\subsection{Two-scale system}

The compactness established in Lemma~\ref{lem:compactness} identifies the limit fields
$(\phiz,\muz)$ and the correctors $(\phi_1,\mu_1)$, but does not yet determine their
relationship. We derive this by passing to the limit in the weak formulation
\eqref{eq:weak1}--\eqref{eq:weak2} against an appropriate family of test functions.

We set 
$$\GD(\zeta_0,\zeta_1)=\nabla \zeta_0+\nabla_y\zeta_1,\quad \forall\,(\zeta_0,\zeta_1)\in L^2(0,T;H^1(\Omega))\X L^2((0,T)\times\Omega;H^1_{\per,0}(Y_p)).$$
\begin{proposition}[Two-scale limit system]
	\label{prop:two-scale}
	Let $(\phiz,\muz,\phi_1,\mu_1)$ be the limit fields from Lemma~\ref{lem:compactness}.
	Then, for all $(\zeta_0,\eta_0)\in [L^2(0,T;H^1(\Omega))]^2$ and
	$(\zeta_1,\eta_1)\in [L^2((0,T)\times\Omega;H^1_{\per,0}(Y_p))]^2$,
	\begin{equation}
		\label{eq:two-scale-phi}
		\theta_p\int_0^T\langle\partial_t\phiz,\zeta_0\rangle\,dt
		+\int_0^T\int_\O\int_{Y_p}\GD(\mu,\mu_1)\cdot\GD(\zeta_0,\zeta_1)\,d(y,x,t)
		+\theta_p\int_0^T\int_\O
		G(\phiz)\zeta_0\,d(x,t)=0,
	\end{equation}
	\begin{equation}
		\label{eq:two-scale-mu}
		\theta_p\int_0^T\int_\O
		\muz\,\eta_0\,d(x,t)
		=\int_0^T\int_\O\int_{Y_p}
		\GD(\phiz,\phi_1)\cdot\GD(\eta_0,\eta_1)\,d(y,x,t)
		+\theta_p\int_0^T\int_\O
		F'(\phiz)\eta_0\,d(x,t).
	\end{equation}
\end{proposition}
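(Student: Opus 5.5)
We pass to the limit in the weak formulation \eqref{eq:weak1}--\eqref{eq:weak2} using oscillating test functions of the form
\[
\zeta_\varepsilon(t,x)=\zeta_0(t,x)+\varepsilon\,\psi(t,x)\,w\!\left(\tfrac{x}{\varepsilon}\right),
\]
and similarly for $\eta_\varepsilon$. Here $\zeta_0\in C^\infty_c$-in-time with values in $C^1(\overline\Omega)$, $\psi\in C_c^\infty((0,T)\times\Omega)$ and $w\in H^1_{\per}(Y_p)$ (extended periodically). These are admissible in \eqref{eq:weak1}--\eqref{eq:weak2} because they belong to $H^1(\Omega_p^\varepsilon)$.

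We then unfold every integral with $\mathcal T^\ast_\varepsilon$. We have $\mathcal T^\ast_\varepsilon(\zeta_\varepsilon)\to\zeta_0$ strongly and
\[
\mathcal T^\ast_\varepsilon(\nabla\zeta_\varepsilon)\to \nabla\zeta_0+\psi\nabla_y w
\]
strongly in $L^2((0,T)\times\Omega\times Y_p)$; both follow from the standard strong unfolding convergence of smooth functions together with $\mathcal T_\varepsilon(w(\cdot/\varepsilon))=w(y)$. By the unfolding criterion \eqref{eq:UCI}, the contribution of $\Lambda_\varepsilon$ vanishes, since all integrands are bounded in $L^p$ with $p>1$. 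Combining weak–strong products with \eqref{eq:Con01} then gives the limit of each term.

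\textbf{Gradient terms.} They converge to $\int\!\!\int\!\!\int_{Y_p}\GD(\mu,\mu_1)\cdot(\nabla\zeta_0+\psi\nabla_y w)$, and the $\phi$-term converges analogously.

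\textbf{Nonlinear terms.} For $G(\phi_\varepsilon)\zeta_\varepsilon$ we use $\mathcal T^\ast_\varepsilon(G(\phi_\varepsilon))=G(\mathcal T^\ast_\varepsilon\phi_\varepsilon)\to G(\phi)$ strongly in $L^2$ (Lipschitz continuity of $G$). For $F'(\phi_\varepsilon)$ we argue as follows. The strong $L^2$ convergence of $\mathcal T^\ast_\varepsilon\phi_\varepsilon$ and the uniform $L^\infty(L^4)$ bound \eqref{eq:L4bound}, which transfers to the unfolding, give strong convergence in $L^q$ for every $q<4$. Then $F'(\mathcal T^\ast_\varepsilon\phi_\varepsilon)\to F'(\phi)$ in $L^1$ by the growth bound and Vitali's theorem, while it stays bounded in $L^{4/3}$. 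Since $\mathcal T^\ast_\varepsilon\eta_\varepsilon\to\eta_0$ strongly in $L^4$ (indeed uniformly, for smooth $w$; for general $w\in H^1$ a density argument in $w$ closes this), the product converges. Since $\phi,\zeta_0$ do not depend on $y$, integration over $Y_p$ produces the factor $\theta_p$. The same applies to the $\mu_\varepsilon\eta_\varepsilon$ term. Equation \eqref{eq:weak2} holds for a.e. $t$, so we first integrate it in time against the test functions.

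\textbf{Time derivative.} This is the step I expect to be most delicate. I would write $\int_0^T\langle\partial_t\phi_\varepsilon,\zeta_\varepsilon\rangle=-\int_0^T\!\int_{\Omega_p^\varepsilon}\phi_\varepsilon\,\partial_t\zeta_\varepsilon$, using compact support in time (or accounting for initial data via Assumption~\ref{ass:init}). The right-hand side is a pure $L^2$ integral. After unfolding, it converges to $-\theta_p\int\!\!\int\phi\,\partial_t\zeta_0$ by the strong convergence of $\mathcal T^\ast_\varepsilon\phi_\varepsilon$; the $\varepsilon\psi w$ part is $O(\varepsilon)$. Identifying this with $\theta_p\int\langle\partial_t\phi,\zeta_0\rangle$ requires $\partial_t\phi\in L^2(0,T;H^1(\Omega)')$. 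That follows from Lemma~\ref{lem:ext-consistency} and Step 2 of Lemma~\ref{lem:compactness}, since $\partial_t\widetilde\phi_\varepsilon$ is bounded there and converges weakly to $\partial_t\phi$.

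\textbf{Density.} Finally we extend from tensor-product test functions to general test functions. The test functions $\psi(t,x)w(y)$ are dense in $L^2((0,T)\times\Omega;H^1_{\per,0}(Y_p))$, and constants in $y$ can be absorbed because only $\nabla_y$ enters. Smooth $\zeta_0$ are dense in $L^2(0,T;H^1(\Omega))$. All terms in \eqref{eq:two-scale-phi}--\eqref{eq:two-scale-mu} are continuous in these topologies: this uses $F'(\phi)\in L^\infty(L^{4/3})$ paired against $L^2(H^1)\subset L^2(L^4)$, which is fine in $d\le3$ since $F'(\phi)\in L^\infty(0,T;L^{4/3})$. The identities therefore extend to the stated test spaces.
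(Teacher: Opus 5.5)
Your proposal is correct and follows essentially the same route as the paper: oscillating test functions $\zeta_0+\varepsilon\zeta_1(\cdot,\cdot/\varepsilon)$, unfolding with weak--strong products, Lipschitz continuity of $G$ and a Vitali argument for $F'$, integration by parts in time, and a final density step. One detail is handled better in your version: you take $\zeta_0$ smooth up to $\partial\Omega$, whereas the paper takes $\zeta_0\in C_c^\infty((0,T)\times\Omega)$, and the density step into $L^2(0,T;H^1(\Omega))$ (which encodes the natural Neumann condition) needs your choice, because compactly supported functions only approximate elements of $L^2(0,T;H^1_0(\Omega))$.
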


\begin{proof}
We construct a recovery sequence of test functions and pass to the limit in
\eqref{eq:weak1}--\eqref{eq:weak2}.

\noindent
\textbf{Recovery sequence.} For smooth $\zeta_0\in C_c^\infty((0,T)\times\Omega)$ and
$\zeta_1\in C_c^\infty((0,T)\times\Omega;C^\infty_{\per}(Y_p))$ with mean zero over $Y_p$, define
\[
\zeta^\varepsilon(t,x):=\zeta_0(t,x)+\varepsilon\zeta_1\!\left(t,x,\tfrac{x}{\varepsilon}\right),
\qquad (t,x)\in(0,T)\times\Omega_p^\varepsilon.
\]
This is the restriction to $\Omega_p^\varepsilon$ of a function defined on $(0,T)\times\Omega$,
hence $\zeta^\varepsilon\in L^2(0,T;H^1(\Omega_p^\varepsilon))$ is an admissible test function
in \eqref{eq:weak1}. Since $\zeta_0$ is compactly supported in time,
$\zeta^\varepsilon$ vanishes near $t=0$ and $t=T$. By the strong consistency
of the unfolding operator,
\[
\begin{aligned}
	\mathcal{T}_\varepsilon^\ast(\partial_t\zeta^\varepsilon)&\to\partial_t\zeta_0
	\quad&&\text{strongly in }L^2((0,T)\times\Omega\times Y_p),\\
	\mathcal{T}_\varepsilon^\ast(\zeta^\varepsilon)&\to\zeta_0
	\quad&&\text{strongly in }L^2((0,T)\times\Omega\times Y_p),\\
	\mathcal{T}_\varepsilon^\ast(\nabla\zeta^\varepsilon)&\to\nabla_x\zeta_0+\nabla_y\zeta_1
	\quad&&\text{strongly in }L^2((0,T)\times\Omega\times Y_p)^d.
\end{aligned}
\]
An analogous recovery sequence $\eta^\varepsilon$ is constructed for the chemical potential
equation using $\eta_0$ and $\eta_1$.

\noindent
\textbf{Passage to the limit.} Insert $\zeta^\varepsilon$ into \eqref{eq:weak1} and
$\eta^\varepsilon$ into \eqref{eq:weak2}.

\noindent
Since $\zeta^\varepsilon$ vanishes at $t=0$ and $t=T$,
we integrate by parts in time:
\[
\int_0^T\langle\partial_t\phi_\varepsilon,\zeta^\varepsilon\rangle_{V_\varepsilon',V_\varepsilon}\,dt
=
-\int_0^T\int_{\Omega_p^\varepsilon}\phi_\varepsilon\,\partial_t\zeta^\varepsilon\,dx\,dt.
\]
Applying the unfolding criterion \eqref{EQUCI} to the right-hand side and
using the strong convergence $\mathcal{T}_\varepsilon^\ast(\phi_\varepsilon)\to\phi$
in $L^2((0,T)\times\Omega\times Y_p)$ from Lemma~\ref{lem:compactness}, together
with the strong convergence
$\mathcal{T}_\varepsilon^\ast(\partial_t\zeta^\varepsilon)\to\partial_t\zeta_0$,
we obtain
\[
-\int_0^T\int_{\Omega_p^\varepsilon}\phi_\varepsilon\,\partial_t\zeta^\varepsilon\,dx\,dt
\;\longrightarrow\;
-|Y_p|\int_0^T\int_\Omega\phi\,\partial_t\zeta_0\,dx\,dt
=
\theta_p\int_0^T\langle\partial_t\phi,\zeta_0\rangle_{V',V}\,dt,
\]
where the last equality uses integration by parts in time and the fact that
$\zeta_0$ is compactly supported in $(0,T)$.

\noindent
Apply the unfolding criterion \eqref{EQUCI}
to rewrite each remaining integral over $\Omega_p^\varepsilon$ as an integral over
$\Omega\times Y_p$, up to a remainder vanishing as $\varepsilon\to 0$.
Using the weak convergences of
$\mathcal T_\varepsilon^\ast(\nabla\phi_\varepsilon)$ and
$\mathcal T_\varepsilon^\ast(\nabla\mu_\varepsilon)$ from
\eqref{eq:Con01}, together with the strong convergence of the unfolded
gradients of the admissible test functions, we may pass to the limit in the
linear diffusion terms.

It remains to identify the nonlinear terms.
Set $u_\varepsilon:=\mathcal T_\varepsilon^\ast(\phi_\varepsilon)$.
By compactness,
\[
u_\varepsilon\to\phiz
\quad\text{strongly in }L^2((0,T)\times\Omega\times Y_p),
\]
and the uniform estimate \eqref{eq:energy-estimate} gives
\[
\{u_\varepsilon\}_\varepsilon
\quad\text{bounded in }L^\infty(0,T;L^4(\Omega\times Y_p)).
\]
Since $G$ is globally Lipschitz and $G(0)=0$, the definition of the
perforated unfolding operator gives
\[
\mathcal T_\varepsilon^\ast(G(\phi_\varepsilon))
=
G(u_\varepsilon)
\quad\text{a.e. in }(0,T)\times\Omega\times Y_p.
\]
Therefore,
\[
\mathcal T_\varepsilon^\ast(G(\phi_\varepsilon))
\to
G(\phiz)
\quad\text{strongly in }L^2((0,T)\times\Omega\times Y_p).
\]
For the potential term, Assumption~\ref{ass:F} implies
\[
|F'(a)-F'(b)|
\le
C(1+|a|^2+|b|^2)|a-b|,
\qquad a,b\in\mathbb R.
\]
Since $u_\varepsilon\to\phiz$ a.e.\ in $(0,T)\times\Omega\times Y_p$
(along a subsequence, from the $L^2$-strong convergence), and $F'$ is
continuous, we have $F'(u_\varepsilon)\to F'(\phiz)$ a.e. To upgrade this
to strong $L^2(0,T;L^{6/5})$ convergence we apply Vitali's convergence
theorem. The family $(F'(u_\varepsilon))_\varepsilon$ is equiintegrable in
$L^2(0,T;L^{6/5}(\Omega\times Y_p))$: indeed, using the growth bound and
H\"{o}lder's inequality,
\[
\|F'(u_\varepsilon)\|_{L^{6/5}(\Omega\times Y_p)}
\le
C\bigl(|\Omega\times Y_p|^{1/2}
+\|u_\varepsilon\|^3_{L^4(\Omega\times Y_p)}\bigr),
\]
and the right-hand side is bounded in $L^\infty(0,T)$ uniformly in
$\varepsilon$ by the $L^4$-bound. Vitali's theorem therefore gives
\[
F'(u_\varepsilon)\to F'(\phiz)
\quad\text{strongly in }
L^2(0,T;L^{6/5}(\Omega\times Y_p)).
\]
We now compare $F'(u_\varepsilon)$ with
$\mathcal T_\varepsilon^\ast(F'(\phi_\varepsilon))$. Since the unfolding
operator is set equal to zero on the boundary layer
$\Lambda_\varepsilon\times Y_p$, while $u_\varepsilon=0$ there, we have
\[
\mathcal T_\varepsilon^\ast(F'(\phi_\varepsilon))
=
F'(u_\varepsilon)
-
F'(0)\mathbf 1_{\Lambda_\varepsilon}
\quad\text{a.e. in }(0,T)\times\Omega\times Y_p.
\]
Hence
\begin{multline*}
	\|\mathcal T_\varepsilon^\ast(F'(\phi_\varepsilon))
	-F'(\phiz)\|_{L^2(0,T;L^{6/5}(\Omega\times Y_p))}\\
	\le
	\|F'(u_\varepsilon)-F'(\phiz)\|_{L^2(0,T;L^{6/5}(\Omega\times Y_p))}
	+
	|F'(0)|\,\|\mathbf 1_{\Lambda_\varepsilon}
	\|_{L^2(0,T;L^{6/5}(\Omega\times Y_p))}.
\end{multline*}
The first term tends to zero by the preceding argument. The second term also
tends to zero because $|\Lambda_\varepsilon|\to 0$. Therefore,
\[
\mathcal T_\varepsilon^\ast(F'(\phi_\varepsilon))
\to
F'(\phiz)
\quad\text{strongly in }
L^2(0,T;L^{6/5}(\Omega\times Y_p)).
\]
This convergence is sufficient to pass to the limit against smooth test
functions, since $H^1(\Omega)\hookrightarrow L^6(\Omega)$ for $d\le 3$.
Therefore, passing to the limit in the unfolded variational formulation gives
\eqref{eq:two-scale-phi}--\eqref{eq:two-scale-mu} for smooth test functions.

A density argument extends the identities to the full test function spaces.
\end{proof}

\subsection{Cell problems, correctors, and the homogenized tensor}
\label{ssec:cell}

Taking $\zeta_0=0$ in \eqref{eq:two-scale-phi} and $\eta_0=0$ in \eqref{eq:two-scale-mu}
and localizing to a.e.\ $(t,x)\in(0,T)\times\Omega$, the correctors $\phi_1$ and $\mu_1$
satisfy, for a.e.\ $(t,x)$,
\[
\int_{Y_p}\!(\nabla_x\muz+\nabla_y\mu_1)\cdot\nabla_y\zeta_1\,dy=0
\quad\forall\,\zeta_1\in H^1_{\per,0}(Y_p),
\]
\[
\int_{Y_p}\!(\nabla_x\phiz+\nabla_y\phi_1)\cdot\nabla_y\eta_1\,dy=0
\quad\forall\,\eta_1\in H^1_{\per,0}(Y_p).
\]
These are scalar Neumann cell problems on $Y_p$. For $i=1,\dots,d$, let
$\chi_i\in H^1_{\per,0}(Y_p)$ be the unique solution of
\begin{equation}
	\label{eq:cell-problem}
	\int_{Y_p}(\be_i+\nabla_y\chi_i)\cdot\nabla_y\psi\,dy=0
	\qquad\forall\,\psi\in H^1_{\per,0}(Y_p),
\end{equation}
or equivalently $-\Delta_y(y_i+\chi_i)=0$ in $Y_p$ with no-flux condition
$(\be_i+\nabla_y\chi_i)\cdot\bn_y=0$ on $\partial Y_s$ and $Y$-periodicity.
Existence and uniqueness follow from the Lax--Milgram theorem on $H^1_{\per,0}(Y_p)$.

\begin{lemma}[Corrector representation]
	\label{lem:corrector-representation}
	The two-scale correctors satisfy
	\begin{equation}
		\phi_1(t,x,y)=\sum_{i=1}^d\partial_{x_i}\phiz(t,x)\,\chi_i(y),
		\qquad
		\mu_1(t,x,y)=\sum_{i=1}^d\partial_{x_i}\muz(t,x)\,\chi_i(y).
	\end{equation}
\end{lemma}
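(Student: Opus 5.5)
The argument is linearity plus uniqueness for the cell problem, carried out at a.e.\ fixed $(t,x)$.

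Fix $(t,x)$ in a full-measure subset of $(0,T)\times\Omega$ on which the following hold:
\begin{itemize}
\item $\nabla_x\phiz(t,x)$ and $\nabla_x\muz(t,x)$ are well-defined vectors in $\R^d$;
\item $\phi_1(t,x,\cdot)$ and $\mu_1(t,x,\cdot)$ belong to $H^1_{\per,0}(Y_p)$;
\item the two localized identities displayed just before \eqref{eq:cell-problem} hold.
\end{itemize}
The localization itself comes from taking $\zeta_0=0$ (resp.\ $\eta_0=0$) in \eqref{eq:two-scale-phi}--\eqref{eq:two-scale-mu}. The test functions are products $\zeta_1(t,x,y)=\alpha(t,x)\psi(y)$, with $\alpha\in L^\infty((0,T)\times\Omega)$ and $\psi$ ranging over a countable dense subset of $H^1_{\per,0}(Y_p)$. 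Arbitrariness of $\alpha$ then yields the pointwise identity outside a null set that does not depend on $\psi$. Density extends it to all $\psi\in H^1_{\per,0}(Y_p)$.

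Next, set
\[
w(y):=\sum_{i=1}^d\partial_{x_i}\phiz(t,x)\,\chi_i(y)\in H^1_{\per,0}(Y_p).
\]
Multiply \eqref{eq:cell-problem} by $\partial_{x_i}\phiz(t,x)$ and sum over $i$. Since $\sum_i\partial_{x_i}\phiz\,\be_i=\nabla_x\phiz$, this gives
\[
\int_{Y_p}(\nabla_x\phiz+\nabla_y w)\cdot\nabla_y\psi\,dy=0\qquad\forall\psi\in H^1_{\per,0}(Y_p).
\]
Subtract this from the localized identity for $\phi_1$. The difference $\delta:=\phi_1(t,x,\cdot)-w$ then satisfies $\int_{Y_p}\nabla_y\delta\cdot\nabla_y\psi\,dy=0$ for all admissible $\psi$. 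Choosing $\psi=\delta$ gives $\nabla_y\delta=0$ in $Y_p$. Since $Y_p$ is connected, $\delta$ is constant, and since $\delta$ has zero mean, $\delta=0$. This is the uniqueness part of Lax--Milgram, using coercivity of the Dirichlet form on $H^1_{\per,0}(Y_p)$ via the Poincar\'e--Wirtinger inequality on the connected cell. The same argument with $\muz$ and $\mu_1$ gives the second formula.

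Two technical points remain.

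\textbf{Mean-zero normalization and periodicity.} Lemma~\ref{lem:compactness} produces $\phi_1,\mu_1$ with mean zero over $Y$, as restrictions of $Y$-periodic functions, but the cell problem is posed on $H^1_{\per,0}(Y_p)$. I would fix this by subtracting the $Y_p$-mean of $\phi_1(t,x,\cdot)$. This shift does not change $\nabla_y\phi_1$, hence does not affect \eqref{eq:Con01} or the two-scale system. I would also note that restrictions to $Y_p$ of $Y$-periodic $H^1$ functions lie in $H^1_{\per}(Y_p)$. With the representative normalized this way, the identity holds as stated. This is the step I expect to need the most care, although it is bookkeeping rather than analysis.

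\textbf{Measurability.} Each $\chi_i$ is independent of $(t,x)$ and $\nabla_x\phiz\in L^2$, so the right-hand sides lie in $L^2((0,T)\times\Omega;H^1_{\per,0}(Y_p))$. The identities therefore hold as elements of that space, not just pointwise a.e.
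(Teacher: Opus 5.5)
Your proof is correct and follows the same route as the paper: linearity of the cell problem in the macroscopic gradient, then uniqueness in $H^1_{\per,0}(Y_p)$ at a.e.\ fixed $(t,x)$. You also supply details the paper's two-line proof leaves implicit, namely the countable-dense localization and the renormalization of $\phi_1,\mu_1$ by their $Y_p$-means. The renormalization is genuinely needed: the compactness step only gives zero $Y$-mean, so without it the identity holds only up to an additive function of $(t,x)$.
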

\begin{proof}
	By linearity of the cell equation in the data $\nabla_x\phiz$ and $\nabla_x\muz$, the
	functions $\sum_i\partial_{x_i}\phiz\,\chi_i$ and $\sum_i\partial_{x_i}\muz\,\chi_i$
	satisfy the respective cell problems pointwise in $(t,x)$. Uniqueness in
	$H^1_{\per,0}(Y_p)$ gives the result.
\end{proof}

The homogenized diffusion tensor is defined by
\begin{equation}
	\label{eq:Bhom}
	\B^{\mathrm{hom}}_{ij}
	:=\frac{1}{|Y_p|}\int_{Y_p}(\be_i+\nabla_y\chi_i)\cdot(e_j+\nabla_y\chi_j)\,dy.
\end{equation}
Standard arguments show that $\B^{\mathrm{hom}}$ is symmetric and uniformly elliptic;
see e.g.\ \cite{CioranescuDonato1999}.

\subsection{Homogenized system}

\begin{theorem}[Qualitative homogenization]
	\label{thm:qual-hom}
	Let Assumptions~\ref{ass:F}, \ref{ass:G}, and~\ref{ass:init} hold. Then the microscopic
	solutions $(\phi_\varepsilon,\mu_\varepsilon)$ converge, up to a subsequence, to a pair
	$(\phiz,\muz)$ satisfying the homogenized system
	\begin{equation}
		\label{eq:hom-CH}
		\begin{cases}
			\partial_t\phiz-\operatorname{div}(\B^{\mathrm{hom}}\nabla\muz)+G(\phiz)=0
			&\text{in }(0,T)\times\Omega,\\[1mm]
			\muz=-\operatorname{div}(\B^{\mathrm{hom}}\nabla\phiz)+F'(\phiz)
			&\text{in }(0,T)\times\Omega,\\[1mm]
			\B^{\mathrm{hom}}\nabla\phiz\cdot \bn=\B^{\mathrm{hom}}\nabla\muz\cdot \bn=0
			&\text{on }(0,T)\times\partial\Omega,\\[1mm]
			\phiz(0,x)=\phi_0(x)&\text{in }\Omega.
		\end{cases}
	\end{equation}
\end{theorem}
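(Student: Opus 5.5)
The plan is to derive \eqref{eq:hom-CH} directly from the two-scale system of Proposition~\ref{prop:two-scale}, combined with the corrector representation of Lemma~\ref{lem:corrector-representation}. The limit pair $(\phiz,\muz)$ is the one furnished by Lemma~\ref{lem:compactness}.

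\textbf{Flux identification.} Take $\zeta_1=0$ in \eqref{eq:two-scale-phi} and $\eta_1=0$ in \eqref{eq:two-scale-mu}. Substituting $\mu_1=\sum_i\partial_{x_i}\muz\,\chi_i$ gives, for a.e.\ $(t,x)$,
\[
\int_{Y_p}\GD(\muz,\mu_1)\cdot\nabla\zeta_0\,dy
=\sum_{i,j}\partial_{x_i}\muz\,\partial_{x_j}\zeta_0\int_{Y_p}(\be_i+\nabla_y\chi_i)\cdot \be_j\,dy .
\]
I then rewrite $\int_{Y_p}(\be_i+\nabla_y\chi_i)\cdot\be_j\,dy$ as $\int_{Y_p}(\be_i+\nabla_y\chi_i)\cdot(\be_j+\nabla_y\chi_j)\,dy$. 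This is justified by testing the cell problem \eqref{eq:cell-problem} for $\chi_i$ with $\psi=\chi_j$. The result equals $|Y_p|\B^{\mathrm{hom}}_{ij}=\theta_p\B^{\mathrm{hom}}_{ij}$ by \eqref{eq:Bhom}. The same computation applies to $\phi_1$.

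\textbf{Weak formulation.} Dividing by $\theta_p>0$, I obtain for all $\zeta_0,\eta_0\in L^2(0,T;H^1(\Omega))$
\[
\int_0^T\langle\partial_t\phiz,\zeta_0\rangle\,dt+\int_0^T\!\!\int_\Omega \B^{\mathrm{hom}}\nabla\muz\cdot\nabla\zeta_0+\int_0^T\!\!\int_\Omega G(\phiz)\zeta_0=0,
\]
\[
\int_0^T\!\!\int_\Omega\muz\eta_0=\int_0^T\!\!\int_\Omega\B^{\mathrm{hom}}\nabla\phiz\cdot\nabla\eta_0+\int_0^T\!\!\int_\Omega F'(\phiz)\eta_0 .
\]
Because the test functions are arbitrary in $H^1(\Omega)$, with no boundary restriction, this is exactly the weak form of \eqref{eq:hom-CH}$_{1,2}$ including the natural conditions $\B^{\mathrm{hom}}\nabla\phiz\cdot\bn=\B^{\mathrm{hom}}\nabla\muz\cdot\bn=0$. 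The second equation holds for a.e.\ $t$ by localizing in time.

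\textbf{Regularity of the limit.} I need $\partial_t\phiz\in L^2(0,T;H^1(\Omega)')$. This follows from the bound on $\partial_t\widetilde\phi_\varepsilon$ in Lemma~\ref{lem:ext-consistency}, and it can alternatively be read off from the first identity itself.

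\textbf{Initial condition (main obstacle).} I expect this step to be the delicate one. By Step~2 of Lemma~\ref{lem:compactness}, $\widetilde\phi_\varepsilon\to\phiz$ in $C([0,T];L^2(\Omega))$, so $\widetilde\phi_\varepsilon(0)=P_\varepsilon\phi^0_\varepsilon\to\phiz(0)$ in $L^2(\Omega)$. Then, by the $L^2$-stability and consistency of unfolding used in Step~4 of that lemma, $\mathcal T^\ast_\varepsilon(\phi^0_\varepsilon)=\mathcal T_\varepsilon(P_\varepsilon\phi^0_\varepsilon)|_{Y_p}\to\phiz(0)$ strongly in $L^2(\Omega\times Y_p)$. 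Assumption~\ref{ass:init} states that the same sequence converges to $\phi_0$, so uniqueness of limits gives $\phiz(0)=\phi_0$.

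One alternative is to keep test functions with $\zeta_0(T)=0$ but $\zeta_0(0)\neq0$ in the time integration by parts, and identify the boundary term $\int_{\Omega_p^\varepsilon}\phi^0_\varepsilon\zeta_0(0)\to\theta_p\int_\Omega\phi_0\zeta_0(0)$ via the unfolding criterion \eqref{EQUCI}. Either route gives the initial condition, and the convergence of the full sequence of initial data is only used through Assumption~\ref{ass:init}. The convergence statement in the theorem holds only along the subsequence of Lemma~\ref{lem:compactness}, since uniqueness for \eqref{eq:hom-CH} is not claimed.
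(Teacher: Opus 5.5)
Your proposal is correct and follows the paper's proof. You substitute the corrector representation into the two-scale system and identify $\int_{Y_p}(\be_i+\nabla_y\chi_i)\cdot\be_j\,dy=\theta_p\B^{\mathrm{hom}}_{ij}$; your symmetrization, testing the cell problem with $\chi_j$, is exactly what the paper leaves implicit. You then divide by $\theta_p$, read off the conormal conditions from the unrestricted test space $H^1(\Omega)$, and identify $\phi(0)$ starting from $P_\varepsilon\phi^0_\varepsilon\to\phi(0)$ in $L^2(\Omega)$.

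The only variation is in that last step. You conclude via strong convergence of $\mathcal T_\varepsilon(P_\varepsilon\phi^0_\varepsilon)|_{\Omega\times Y_p}=\mathcal T^\ast_\varepsilon(\phi^0_\varepsilon)$ and uniqueness of limits, whereas the paper tests against $\psi\in L^2(\Omega)$ with the unfolding criterion. Your version is in fact tighter: the paper's passage from $\int_{\Omega_p^\varepsilon}\phi^0_\varepsilon\psi\,dx\to\theta_p\int_\Omega\phi^0\psi\,dx$ to a statement about $P_\varepsilon\phi^0_\varepsilon$ on all of $\Omega$ additionally needs $\mathbf 1_{\Omega_p^\varepsilon}\rightharpoonup\theta_p$ weakly-$\ast$ in $L^\infty(\Omega)$.
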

\begin{proof}
	Substituting the corrector representation of Lemma~\ref{lem:corrector-representation} into
	the two-scale system \eqref{eq:two-scale-phi}--\eqref{eq:two-scale-mu} and integrating out
	the $y$-variable using the definition \eqref{eq:Bhom} of $\B^{\mathrm{hom}}$, one obtains,
	for all $\zeta_0\in L^2(0,T;H^1(\Omega))$,
	\[
	\int_0^T\langle\partial_t\phiz,\zeta_0\rangle\,dt
	+\int_{(0,T)\times\Omega}\B^{\mathrm{hom}}\nabla\muz\cdot\nabla\zeta_0\,d(x,t)
	+\int_{(0,T)\times\Omega}G(\phiz)\zeta_0\,d(x,t)=0,
	\]
	\[
	\int_{(0,T)\times\Omega}\muz\,\eta_0\,d(x,t)
	=\int_{(0,T)\times\Omega}\B^{\mathrm{hom}}\nabla\phiz\cdot\nabla\eta_0\,d(x,t)
	+\int_{(0,T)\times\Omega}F'(\phiz)\eta_0\,d(x,t),
	\]
	for all $\eta_0\in L^2(0,T;H^1(\Omega))$. Here we used that integrals over
	$\Omega\times Y_p$ of $y$-independent functions contribute a factor of $|Y_p|$, which
	is divided through to be absorbed into $\B^{\mathrm{hom}}$ via \eqref{eq:Bhom}. The boundary conditions follow
	from the natural boundary conditions encoded in the test function space $H^1(\Omega)$.
	
The initial condition $\phi(0)=\phi^0$ is identified as follows.
By Lemma~\ref{lem:compactness}, $\widetilde\phi_\varepsilon(0)=P_\varepsilon\phi^0_\varepsilon
\to\phi(0)$ strongly in $L^2(\Omega)$.
By the unfolding criterion \eqref{EQUCI}--\eqref{eq:UCI} and the strong convergence
$\mathcal{T}^\ast_\varepsilon(\phi^0_\varepsilon)\to\phi^0$ in $L^2(\Omega\times Y_p)$
from Assumption~\ref{ass:init}, for every $\psi\in L^2(\Omega)$,
\[
\int_{\Omega_p^\varepsilon}\phi^0_\varepsilon\,\psi\,dx
=
\frac{1}{|Y|}\int_{\Omega\times Y_p}
\mathcal{T}^\ast_\varepsilon(\phi^0_\varepsilon)\,\psi\,d(x,y)
+\int_{\Lambda_\e\cap\O^\e_p}\phi^0_\e\psi\,dx
\;\longrightarrow\;
\theta_p\int_\Omega\phi^0\,\psi\,dx,
\]
so $P_\varepsilon\phi^0_\varepsilon\rightharpoonup\phi^0$ weakly in $L^2(\Omega)$.
Since the strong limit is already $\phi(0)$, we conclude $\phi(0)=\phi^0$.
\end{proof}

\section{Conditional Quantitative rates}
\label{sec:quant}

\subsection{Scale-splitting operator}
\label{Sec:scale}

We recall the scale-splitting operator used in the quantitative estimates.
The construction follows the fixed-domain scale-splitting operator of
\cite[Section~1.6]{CDG18}. Since the homogenized fields are defined on the
fixed macroscopic domain $\Omega$, we apply the scale-splitting operator to
functions on $\Omega$ and then restrict the resulting functions to
$\O^\e_p$. Thus the operator itself is not intrinsic to the
perforated domain. The perforated geometry enters through the restriction to
$\O^\e_p$, the unfolding onto $\Omega\times Y_p$, and the
cell correctors defined on $Y_p$.

\noindent\textbf{Full-cell averages.}
Let $Y=(0,1)^d$. Since $\Omega$ is Lipschitz, we fix a bounded extension
operator
\[
\mathcal E_\Omega:H^m(\Omega)\to H^m(\mathbb R^d),
\qquad m=0,1,2,
\]
where the value of $m$ is chosen according to the regularity of the function
under consideration. For $v\in L^1_{\mathrm{loc}}(\mathbb R^d)$ and
$\xi\in\mathbb Z^d$, define the local full-cell average by
\[
\mathcal M_\varepsilon v(\varepsilon\xi)
:=
\frac{1}{\varepsilon^d |Y|}
\int_{\varepsilon(\xi+Y)} v(z)\,dz .
\]
The normalization is the usual average over the full cell
$\varepsilon(\xi+Y)$.

\noindent\textbf{The $Q_1$ scale-splitting operator.}
Let $\{q_\kappa\}_{\kappa\in\{0,1\}^d}$ be the standard $Q_1$ shape
functions on $Y$, namely
\[
q_\kappa(y)
=
\prod_{\ell=1}^d
y_\ell^{\kappa_\ell}(1-y_\ell)^{1-\kappa_\ell},
\qquad y\in Y.
\]
For $v\in L^1_{\mathrm{loc}}(\mathbb R^d)$, define
\[
\widetilde{\mathcal Q}_\varepsilon v(x)
:=
\sum_{\kappa\in\{0,1\}^d}
\mathcal M_\varepsilon v
\left(
\varepsilon\left\lfloor\frac{x}{\varepsilon}\right\rfloor_Y
+\varepsilon\kappa
\right)
q_\kappa
\left(
\left\{\frac{x}{\varepsilon}\right\}_Y
\right),
\qquad x\in\mathbb R^d .
\]
For a function $v$ defined on $\Omega$, we set
\[
\mathcal Q_\varepsilon v
:=
\left.
\widetilde{\mathcal Q}_\varepsilon(\mathcal E_\Omega v)
\right|_{\Omega}.
\]
When the function is used on the perforated domain, we use the same notation
for its restriction:
\[
\mathcal Q_\varepsilon v
:=
\left.
\widetilde{\mathcal Q}_\varepsilon(\mathcal E_\Omega v)
\right|_{\O^\e_p}.
\]
By construction, $\mathcal Q_\varepsilon v$ is separately affine on each
full cell $\varepsilon(\xi+Y)$. It is important to note that this does not
mean that its gradient is constant on each cell. In dimensions $d\geq2$, a
$Q_1$-function generally has a gradient which still depends on the cell
variable.

\noindent\textbf{Properties.}
For  $\e>0$, the scale-splitting operator 
$$\Qc_\e\,:\, H^1(\O) \to W^{1,\infty}(\O),$$
satisfies the following estimates, with constants
$C>0$ that may depend on $\Omega$,
$Y$, $d$, and the chosen extension operator, but not on $\varepsilon$.

\begin{enumerate}[label=\textup{(P\arabic*)}]
	\item[(P1)] \textit{(Stability in $H^1$.)}
	For every $v\in H^1(\Omega)$,
	\begin{equation}\label{eq:Q-H1-stability}
		\begin{aligned}
			\|\mathcal Q_\varepsilon v\|_{L^2(\O^\e_p)}
			\leq
			C\|v\|_{L^2(\Omega)} ,\quad 	\|\nabla \mathcal Q_\varepsilon v\|_{L^2(\O^\e_p)}
		&\leq
			C\|\nabla v\|_{L^2(\Omega)} ,\\
			\|v-\mathcal Q_\varepsilon v\|_{L^2(\O^\e_p)}
			&\leq
			C\varepsilon\|\nabla v\|_{L^2(\Omega)} .
		\end{aligned}
	\end{equation}
	
	\item[(P2)] \textit{(Unfolded convergence.)}
	If $v\in H^1(\Omega)$, then
	\begin{equation}\label{eq:Q-unfold-L2}
		\mathcal T_\varepsilon^\ast(\mathcal Q_\varepsilon v)
		\to v
		\quad\text{strongly in }L^2(\Omega\times Y_p),\quad 
		\mathcal T_\varepsilon^\ast(\nabla\mathcal Q_\varepsilon v)
		\to \nabla v
		\quad\text{strongly in }L^2(\Omega\times Y_p)^d .
	\end{equation}
\item[(P3)] \emph{(Gradient product estimate.)} For every $v\in H^1(\Omega)$ and every
$\psi\in L^2_{\per}(Y)$,
\begin{equation}
	\left\| \nabla \Qc_\varepsilon(v)\,\psi\!\left(\frac{x}{\varepsilon}\right) \right\|_{L^2(\Omega_p^\varepsilon)}
	\;\le\; C\,\|\nabla v\|_{L^2(\Omega)}\,\|\psi\|_{L^2(Y)}.
	\label{eq:P5}
\end{equation}
\end{enumerate}
This is the gradient analogue of the product estimate for $\Qc_\varepsilon$
(Proposition~3.2 of~\cite{Griso2004}), and follows from the same cell-by-cell
argument. On each cell $\varepsilon(\xi+Y)$, $\nabla_x\Qc_\varepsilon(v)$ is
expressed via the $Q_1$ nodal formula in terms of differences of local averages
$\mathcal{M}_\varepsilon(v)$ at adjacent nodes divided by $\varepsilon$; each
difference quotient is bounded by $\|\nabla v\|_{L^2}$ on the union of
neighbouring cells via the Poincar\'e--Wirtinger inequality. Squaring,
multiplying by $|\psi(x/\varepsilon)|^2$, integrating over each cell, and
summing over $\xi$ yields \eqref{eq:P5}; see~\cite[Theorem~3.4]{Griso2004}
for the analogous computation. Cells intersecting $\partial\Omega$ are handled
via $\Qc_\varepsilon(v)=\widetilde{\Qc}_\varepsilon(\mathcal{E}_\Omega v)$,
with the Poincar\'e--Wirtinger inequality applied on neighbouring cells in
$\mathbb{R}^d$ and the bound absorbed into $C\|\nabla v\|_{L^2(\Omega)}$
by boundedness of $\mathcal{E}_\Omega:H^1(\Omega)\to H^1(\mathbb{R}^d)$.

The operator $\mathcal Q_\varepsilon$ will be applied to macroscopic
coefficients such as $\partial_{x_i}\phi$ and $\partial_{x_i}\mu$. In the
corrector construction, the regularized coefficients
$\mathcal Q_\varepsilon(\partial_{x_i}\phi)$ and
$\mathcal Q_\varepsilon(\partial_{x_i}\mu)$ are restricted to
$\O^\e_p$ and multiplied by pore-cell correctors
$\chi_i(x/\varepsilon)$. Thus the macroscopic regularization is performed on
the fixed domain $\Omega$, while the perforated geometry is encoded through
the restriction to $\O^\e_p$ and the correctors on $Y_p$.

\subsection{Corrector approximations}
\label{subsec:corrector-approx}

We now define the first-order corrector approximations used in the
quantitative estimates. From this point on, the rate result is conditional
on additional regularity of the homogenized solution. This regularity is not
a consequence of the weak compactness theory of Section~\ref{sec:homogenization};
it is an additional assumption needed to justify the quantitative estimates.
\begin{assumption}[Quantitative regularity assumption]
	\label{ass:quant-reg}
	We assume that the homogenized solution satisfies
	\begin{equation}\label{eq:quant-reg}
		\phi \in L^\infty(0,T;H^2(\Omega))
		\cap H^1(0,T;H^1(\Omega)),
		\qquad
		\mu \in L^2(0,T;H^2(\Omega)).
	\end{equation}
	In addition, we assume $\phi\in C([0,T];H^2(\Omega))$, so that
	$\phi(0)\in H^2(\Omega)$ is well-defined and $Q_\varepsilon(\partial_{x_i}\phi(0))$
	is meaningful. In particular,
	\[
	\partial_{x_i}\phi\in L^\infty(0,T;H^1(\Omega)),
	\qquad
	\partial_{x_i}\mu\in L^2(0,T;H^1(\Omega)),
	\qquad i=1,\ldots,d.
	\]
	The constants in the estimates below may depend on the norm of
	$(\phi,\mu)$ in the spaces appearing in \eqref{eq:quant-reg}, but not
	on $\varepsilon$.
\end{assumption}

\begin{remark}\label{rem:quant-reg}
	Assumption~\ref{ass:quant-reg} is an additional hypothesis on the
	homogenized solution $(\phiz,\muz)$ of~\eqref{eq:hom-CH}; it is
	\emph{not} a consequence of the weak compactness theory of
	Section~\ref{sec:homogenization}. Such assumptions are standard in
	quantitative homogenization: see Bensoussan--Lions--Papanicolaou~\cite{BLP1978},
	Griso~\cite{Griso2004,Griso2006}, and Kenig--Lin--Shen~\cite{KenigLinShen2012}.
	Establishing~\eqref{eq:quant-reg} rigorously would require compatible
	initial data $\phi^0\in H^2(\Omega)$, suitable geometry on $\Omega$
	(convexity or $C^{1,1}$ boundary), and elliptic regularity for
	$-\operatorname{div}(\B^{\mathrm{hom}}\nabla\,\cdot)$, which depends
	on the smoothness of $\partial Y_s$; see~\cite{CDG18,CioranescuDonato1999}.
	We therefore take~\eqref{eq:quant-reg} as a standing hypothesis
	throughout Section~\ref{sec:quant}.
\end{remark}

Throughout this subsection, if a function is defined on \(\Omega\), we use
the same notation for its ordinary restriction to \(\Omega_p^\varepsilon\).
The scale-splitting operator is applied on the fixed domain
\(\Omega\), and the resulting functions are then restricted to
\(\Omega_p^\varepsilon\).

\noindent\textbf{Extension of the cell correctors.}
Let \(\chi_i\in H^1_{\per,0}(Y_p)\), \(i=1,\ldots,d\), be the cell
correctors from Section~\ref{ssec:cell}. Since \(Y_s\Subset Y\) and \(Y_p\)
is Lipschitz, each \(\chi_i\) admits an \(H^1\)-extension to \(Y\). We fix
periodic extensions, still denoted by \(\chi_i\), such that
\[
\chi_i\in H^1_\per(Y),
\qquad
\chi_i|_{Y_p}\ \text{is the original pore-cell corrector}.
\]
This convention makes \(\chi_i(x/\varepsilon)\) well defined also in the
boundary layer of incomplete cells. On complete pore cells, the extension
agrees with the original corrector on \(Y_p\).

\noindent\textbf{Product estimates.}
Besides the gradient product estimate \eqref{eq:P5}, we shall use the
standard scale-splitting product estimate form \cite[Proposition~3.2]{Griso2004}
\begin{equation}\label{eq:Q-product-estimate}
	\left\|
	\Qc_\varepsilon v\,
	\psi\!\left(\frac{x}{\varepsilon}\right)
	\right\|_{L^2(\Omega_p^\varepsilon)}
	\leq
	C
	\|v\|_{L^2(\Omega)}
	\|\psi\|_{L^2(Y)}
\end{equation}
for every \(v\in L^2(\Omega)\) and every
\(\psi\in L^2_\per(Y)\). The constant \(C\) is independent of
\(\varepsilon\). This estimate is obtained by the same cell-by-cell argument
as \eqref{eq:P5}.

\noindent\textbf{Definition of the scalar correctors.}
Let \(\Qc_\varepsilon\) be the fixed-domain scale-splitting operator from
Section~\ref{Sec:scale}. We define, for
\((t,x)\in(0,T)\times\Omega_p^\varepsilon\),
\begin{align}
	\label{eq:Phiapp}
	\Phi^\varepsilon(t,x)
	&:=
	\phiz(t,x)
	+
	\varepsilon
	\sum_{i=1}^d
	\Qc_\varepsilon(\partial_{x_i}\phiz)(t,x)
	\,\chi_i\!\left(\frac{x}{\varepsilon}\right),
	\\
	\label{eq:Muapp}
	M^\varepsilon(t,x)
	&:=
	\muz(t,x)
	+
	\varepsilon
	\sum_{i=1}^d
	\Qc_\varepsilon(\partial_{x_i}\muz)(t,x)
	\,\chi_i\!\left(\frac{x}{\varepsilon}\right).
\end{align}
Note that $\Phi_\varepsilon$ is not required to satisfy any boundary condition
on $\partial\Omega$: in the variational framework employed here, the boundary
conditions on $\partial\Omega$ enter through the homogenized conormal condition
$\B^{\mathrm{hom}}\nabla\phi\cdot \bn = 0$, which is used in the integration-by-parts
argument of Lemma~\ref{lem:elliptic-consistency}. 


\noindent\textbf{Regularity of the scalar correctors.}
Since \(\partial_{x_i}\phiz\in L^\infty(0,T;H^1(\Omega))\), the stability
of \(\Qc_\varepsilon\) gives
\[
\Qc_\varepsilon(\partial_{x_i}\phiz)
\in L^\infty(0,T;H^1(\Omega_p^\varepsilon)).
\]
Similarly,
\[
\Qc_\varepsilon(\partial_{x_i}\muz)
\in L^2(0,T;H^1(\Omega_p^\varepsilon)).
\]
The product rule gives, in the weak sense on \(\Omega_p^\varepsilon\),
\begin{equation}\label{eq:grad-corrector}
	\nabla\!\left(
	\varepsilon
	\Qc_\varepsilon(\partial_{x_i}\phiz)
	\chi_i\!\left(\frac{x}{\varepsilon}\right)
	\right)
	=
	\varepsilon
	\nabla\Qc_\varepsilon(\partial_{x_i}\phiz)
	\chi_i\!\left(\frac{x}{\varepsilon}\right)
	+
	\Qc_\varepsilon(\partial_{x_i}\phiz)
	\nabla_y\chi_i\!\left(\frac{x}{\varepsilon}\right).
\end{equation}
By \eqref{eq:P5},
\[
\left\|
\nabla\Qc_\varepsilon(\partial_{x_i}\phiz)
\chi_i\!\left(\frac{x}{\varepsilon}\right)
\right\|_{L^2(\Omega_p^\varepsilon)}
\leq
C
\|\nabla\partial_{x_i}\phiz\|_{L^2(\Omega)}
\|\chi_i\|_{L^2(Y)}.
\]
By \eqref{eq:Q-product-estimate},
\[
\left\|
\Qc_\varepsilon(\partial_{x_i}\phiz)
\nabla_y\chi_i\!\left(\frac{x}{\varepsilon}\right)
\right\|_{L^2(\Omega_p^\varepsilon)}
\leq
C
\|\partial_{x_i}\phiz\|_{L^2(\Omega)}
\|\nabla_y\chi_i\|_{L^2(Y)}.
\]
Together with \eqref{eq:quant-reg}, this yields
\begin{equation}\label{eq:corrector-H1-bounds}
	\|\Phi^\varepsilon\|_{L^\infty(0,T;H^1(\Omega_p^\varepsilon))}
	\leq
	C\|\phiz\|_{L^\infty(0,T;H^2(\Omega))},
	\qquad
	\|M^\varepsilon\|_{L^2(0,T;H^1(\Omega_p^\varepsilon))}
	\leq
	C\|\muz\|_{L^2(0,T;H^2(\Omega))}.
\end{equation}

Moreover, by \eqref{eq:Q-product-estimate},
\begin{equation}\label{eq:corrector-L2-approx}
	\|\Phi^\varepsilon-\phiz\|_{L^\infty(0,T;L^2(\Omega_p^\varepsilon))}
	\leq
	C\varepsilon
	\|\phiz\|_{L^\infty(0,T;H^1(\Omega))},
	\quad
	\|M^\varepsilon-\muz\|_{L^2(0,T;L^2(\Omega_p^\varepsilon))}
	\leq
	C\varepsilon
	\|\muz\|_{L^2(0,T;H^1(\Omega))}.
\end{equation}

The time derivative of \(\Phi^\varepsilon\) is understood in the distributional
sense in time. Since \(\partial_t\phiz\in L^2(0,T;H^1(\Omega))\) and
\(\Qc_\varepsilon\) is linear in the spatial variable,
\begin{equation}\label{eq:time-derivative-Phi}
	\partial_t\Phi^\varepsilon
	=
	\partial_t\phiz
	+
	\varepsilon
	\sum_{i=1}^d
	\Qc_\varepsilon(\partial_{x_i}\partial_t\phiz)
	\chi_i\!\left(\frac{x}{\varepsilon}\right)
	\quad
	\text{in }L^2(0,T;L^2(\Omega_p^\varepsilon)).
\end{equation}
Furthermore,
\begin{equation}\label{eq:time-corrector-bound}
	\|\partial_t(\Phi^\varepsilon-\phiz)\|_{L^2(0,T;L^2(\Omega_p^\varepsilon))}
	\leq
	C\varepsilon
	\|\partial_t\phiz\|_{L^2(0,T;H^1(\Omega))}.
\end{equation}

\noindent\textbf{Auxiliary Neumann-compatible fluxes.}
For \(w\in H^2(\Omega)\), define
\begin{equation}\label{eq:aux-flux}
	\mathcal J_w^\varepsilon
	:=
	\sum_{i=1}^d
	\Qc_\varepsilon(\partial_{x_i}w)
	\left(
	\be_i+\nabla_y\chi_i\!\left(\frac{x}{\varepsilon}\right)
	\right)
	\quad\text{in }\Omega_p^\varepsilon .
\end{equation}
For \(w=\phiz(t)\) and \(w=\muz(t)\), this flux belongs to
\(L^2(\Omega_p^\varepsilon)^d\) for a.e. \(t\). 
The no-flux property is understood in the following weak sense:
for every $\psi\in H^1(\Omega_p^\varepsilon)$,
\begin{equation}
	\label{eq:aux-flux-neumann}
	\int_{\Omega_p^\varepsilon} J_{w}^\varepsilon\cdot\nabla\psi\,dx
	+
	\int_{\Omega_p^\varepsilon}\psi\,\mathrm{div}\,J_{w}^\varepsilon\,dx
	=0,
\end{equation}
which follows from the cell boundary condition
$(\mathbf{e}_i+\nabla_y\chi_i)\cdot\mathbf{n}_y=0$ on $\partial Y_s$
in weak form~\eqref{eq:cell-problem}, after rescaling to $\Omega_p^\varepsilon$
and summing over cells. Since $b_i=\mathbf{e}_i+\nabla_y\chi_i\in L^2(Y_p)^d$
only, no pointwise normal trace is available; the identity
\eqref{eq:aux-flux-neumann} is the correct variational substitute.

\begin{lemma}[Approximation properties of the correctors]
	\label{lem:corrector-approx}
	Let \(d\in\{2,3\}\). Under the regularity assumption
	\eqref{eq:quant-reg}, we have
	\begin{equation}
		\label{eq:Q-approx-phi}
		\|\phiz-\Qc_\varepsilon\phiz\|_{L^2(0,T;L^2(\Omega_p^\varepsilon))}
		\leq
		C\varepsilon
		\|\nabla\phiz\|_{L^2(0,T;L^2(\Omega))},
	\end{equation}
	and, for each \(i=1,\ldots,d\),
	\begin{equation}
		\label{eq:Q-approx-dphi}
		\|\partial_{x_i}\phiz
		-
		\Qc_\varepsilon(\partial_{x_i}\phiz)
		\|_{L^2(0,T;L^2(\Omega_p^\varepsilon))}
		\leq
		C\varepsilon
		\|\phiz\|_{L^2(0,T;H^2(\Omega))}.
	\end{equation}
	Moreover,
	\begin{equation}
		\label{eq:grad-Phi-split}
		\left\|
		\nabla\Phi^\varepsilon
		-
		\left[
		\nabla\phiz
		+
		\sum_{i=1}^d
		\Qc_\varepsilon(\partial_{x_i}\phiz)
		\nabla_y\chi_i\!\left(\frac{x}{\varepsilon}\right)
		\right]
		\right\|_{L^2((0,T)\times\Omega_p^\varepsilon)}
		\leq
		C\varepsilon
		\|\phiz\|_{L^2(0,T;H^2(\Omega))},
	\end{equation}
	and
	\begin{equation}
		\label{eq:grad-Phi-flux}
		\|\nabla\Phi^\varepsilon-\mathcal J_{\phiz}^{\varepsilon}\|_
		{L^2((0,T)\times\Omega_p^\varepsilon)}
		\leq
		C\varepsilon
		\|\phiz\|_{L^2(0,T;H^2(\Omega))}.
	\end{equation}
	The corresponding estimates for \(M^\varepsilon\) are
	\begin{equation}
		\label{eq:grad-M-split}
		\left\|
		\nabla M^\varepsilon
		-
		\left[
		\nabla\muz
		+
		\sum_{i=1}^d
		\Qc_\varepsilon(\partial_{x_i}\muz)
		\nabla_y\chi_i\!\left(\frac{x}{\varepsilon}\right)
		\right]
		\right\|_{L^2((0,T)\times\Omega_p^\varepsilon)}
		\leq
		C\varepsilon
		\|\muz\|_{L^2(0,T;H^2(\Omega))},
	\end{equation}
	and
	\begin{equation}
		\label{eq:grad-M-flux}
		\|\nabla M^\varepsilon-\mathcal J_{\muz}^{\varepsilon}\|_
		{L^2((0,T)\times\Omega_p^\varepsilon)}
		\leq
		C\varepsilon
		\|\muz\|_{L^2(0,T;H^2(\Omega))}.
	\end{equation}
\end{lemma}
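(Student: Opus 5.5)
The plan is to reduce all six estimates to the third line of (P1) in \eqref{eq:Q-H1-stability}, the gradient product estimate \eqref{eq:P5}, and the product rule \eqref{eq:grad-corrector}, applying them pointwise in time and then integrating over $(0,T)$.

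\textbf{The two approximation estimates.} For \eqref{eq:Q-approx-phi}, fix a.e.\ $t$ and apply (P1) with $v=\phiz(t)\in H^1(\Omega)$, which gives $\|\phiz(t)-\Qc_\varepsilon\phiz(t)\|_{L^2(\Omega_p^\varepsilon)}\le C\varepsilon\|\nabla\phiz(t)\|_{L^2(\Omega)}$. Squaring and integrating over $(0,T)$ gives the claim. For \eqref{eq:Q-approx-dphi}, apply the same inequality with $v=\partial_{x_i}\phiz(t)$. This lies in $H^1(\Omega)$ by Assumption~\ref{ass:quant-reg}, and $\|\nabla\partial_{x_i}\phiz(t)\|_{L^2(\Omega)}\le\|\phiz(t)\|_{H^2(\Omega)}$. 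Here $\Qc_\varepsilon$ acts only in $x$, so time measurability of $t\mapsto\Qc_\varepsilon v(t)$ follows from linearity and boundedness of $\Qc_\varepsilon$.

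\textbf{The split estimates \eqref{eq:grad-Phi-split} and \eqref{eq:grad-M-split}.} By \eqref{eq:grad-corrector},
\[
\nabla\Phi^\varepsilon-\Bigl[\nabla\phiz+\sum_i\Qc_\varepsilon(\partial_{x_i}\phiz)\nabla_y\chi_i(\tfrac{x}{\varepsilon})\Bigr]
=\varepsilon\sum_i\nabla\Qc_\varepsilon(\partial_{x_i}\phiz)\,\chi_i(\tfrac{x}{\varepsilon}).
\]
By \eqref{eq:P5} with $\psi=\chi_i\in H^1_\per(Y)\subset L^2_\per(Y)$ (the extended corrector), each summand is bounded in $L^2(\Omega_p^\varepsilon)$ by $C\varepsilon\|\nabla\partial_{x_i}\phiz(t)\|_{L^2(\Omega)}\|\chi_i\|_{L^2(Y)}$. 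Summing over $i$ and integrating in time yields \eqref{eq:grad-Phi-split}. The same computation with $\muz\in L^2(0,T;H^2(\Omega))$ gives \eqref{eq:grad-M-split}.

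\textbf{The flux estimates \eqref{eq:grad-Phi-flux} and \eqref{eq:grad-M-flux}.} By the definition \eqref{eq:aux-flux},
\[
\nabla\Phi^\varepsilon-\mathcal J^\varepsilon_{\phiz}
=\sum_i\bigl(\partial_{x_i}\phiz-\Qc_\varepsilon(\partial_{x_i}\phiz)\bigr)\be_i+\varepsilon\sum_i\nabla\Qc_\varepsilon(\partial_{x_i}\phiz)\chi_i(\tfrac{x}{\varepsilon}).
\]
The first sum is controlled by \eqref{eq:Q-approx-dphi} and the second by the previous step, and \eqref{eq:grad-M-flux} follows identically.

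\textbf{Main obstacle.} The step needing care is the validity of \eqref{eq:P5} near $\partial\Omega$. There $\Qc_\varepsilon$ acts on $\mathcal E_\Omega(\partial_{x_i}\phiz)$ and $\chi_i(x/\varepsilon)$ is the extended corrector on incomplete cells. I would handle it by noting that the extension $\mathcal E_\Omega$ is $H^1$-bounded, so the cell-by-cell Poincar\'e bound is taken over neighbouring cells in $\mathbb R^d$ and absorbed into $C\|\partial_{x_i}\phiz\|_{H^1(\Omega)}$. The resulting right-hand side involves the full $H^1$ norm rather than only the gradient, which is still controlled by $\|\phiz\|_{H^2}$, so no boundary-layer loss of rate occurs in this lemma.
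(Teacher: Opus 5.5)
Your proposal is correct and follows the paper's proof essentially step for step: you apply (P1) pointwise in time for the two approximation bounds, then the product rule \eqref{eq:grad-corrector} together with \eqref{eq:P5} (using the extended corrector $\chi_i\in H^1_\per(Y)$) for the split estimates, and finally the decomposition $\nabla\Phi^\varepsilon-\mathcal J^\varepsilon_{\phi}=\sum_i(\partial_{x_i}\phi-\Qc_\varepsilon(\partial_{x_i}\phi))\mathbf{e}_i+\varepsilon\sum_i\nabla\Qc_\varepsilon(\partial_{x_i}\phi)\chi_i(x/\varepsilon)$ with \eqref{eq:Q-approx-dphi} for the flux estimates. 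Your remark that the boundary cells cost at most the full $H^1$-norm of $\partial_{x_i}\phi$ (still bounded by $\|\phi\|_{H^2(\Omega)}$), so that no $\varepsilon^{1/2}$ loss arises in this lemma, is consistent with how the paper justifies (P3) through the extension $\mathcal E_\Omega$.
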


\begin{proof}
	We prove the estimates for \(\phiz\). The proof for \(\muz\) is identical.
	
	By the \(L^2\)-approximation property of \(\Qc_\varepsilon\), applied on
	\(\Omega\) and then restricted to \(\Omega_p^\varepsilon\), for a.e.
	\(t\in(0,T)\),
	\[
	\|\phiz(t)-\Qc_\varepsilon\phiz(t)\|_{L^2(\Omega_p^\varepsilon)}
	\leq
	C\varepsilon
	\|\nabla\phiz(t)\|_{L^2(\Omega)}.
	\]
	After squaring and integrating in time, this gives
	\eqref{eq:Q-approx-phi}. Applying the same estimate to
	\(\partial_{x_i}\phiz(t)\in H^1(\Omega)\) yields
	\[
	\|\partial_{x_i}\phiz(t)
	-
	\Qc_\varepsilon(\partial_{x_i}\phiz)(t)
	\|_{L^2(\Omega_p^\varepsilon)}
	\leq
	C\varepsilon
	\|\nabla\partial_{x_i}\phiz(t)\|_{L^2(\Omega)}.
	\]
	Squaring and integrating in time gives \eqref{eq:Q-approx-dphi}.
	
	From the definition of \(\Phi^\varepsilon\), we have in the weak sense
	on \(\Omega_p^\varepsilon\)
	\[
	\nabla\Phi^\varepsilon
	=
	\nabla\phiz
	+
	\sum_{i=1}^d
	\Qc_\varepsilon(\partial_{x_i}\phiz)
	\nabla_y\chi_i\!\left(\frac{x}{\varepsilon}\right)
	+
	\varepsilon
	\sum_{i=1}^d
	\nabla\Qc_\varepsilon(\partial_{x_i}\phiz)
	\chi_i\!\left(\frac{x}{\varepsilon}\right).
	\]
	The last term is estimated by the gradient product estimate
	\eqref{eq:P5}:
	\[
	\left\|
	\nabla\Qc_\varepsilon(\partial_{x_i}\phiz)(t)
	\chi_i\!\left(\frac{x}{\varepsilon}\right)
	\right\|_{L^2(\Omega_p^\varepsilon)}
	\leq
	C
	\|\nabla\partial_{x_i}\phiz(t)\|_{L^2(\Omega)}
	\|\chi_i\|_{L^2(Y)}.
	\]
	Hence
	\[
	\left\|
	\varepsilon
	\sum_{i=1}^d
	\nabla\Qc_\varepsilon(\partial_{x_i}\phiz)
	\chi_i\!\left(\frac{x}{\varepsilon}\right)
	\right\|_{L^2((0,T)\times\Omega_p^\varepsilon)}
	\leq
	C\varepsilon
	\|\phiz\|_{L^2(0,T;H^2(\Omega))},
	\]
	which proves \eqref{eq:grad-Phi-split}.
	
	It remains to compare with \(\mathcal J_{\phiz}^\varepsilon\). Since
	\[
	\mathcal J_{\phiz}^{\varepsilon}
	=
	\sum_{i=1}^d
	\Qc_\varepsilon(\partial_{x_i}\phiz)\be_i
	+
	\sum_{i=1}^d
	\Qc_\varepsilon(\partial_{x_i}\phiz)
	\nabla_y\chi_i\!\left(\frac{x}{\varepsilon}\right),
	\]
	we combine \eqref{eq:grad-Phi-split} with
	\eqref{eq:Q-approx-dphi} and obtain
	\[
	\|\nabla\Phi^\varepsilon-\mathcal J_{\phiz}^{\varepsilon}\|_
	{L^2((0,T)\times\Omega_p^\varepsilon)}
	\leq
	C\varepsilon
	\|\phiz\|_{L^2(0,T;H^2(\Omega))}.
	\]
	This proves \eqref{eq:grad-Phi-flux}. The estimates
	\eqref{eq:grad-M-split} and \eqref{eq:grad-M-flux} follow in the same
	way with \(\muz\) in place of \(\phiz\).
\end{proof}

\subsection{Variational residual equations}
\label{subsec:residual}

The corrector approximations $(\Phi^\varepsilon,M^\varepsilon)$ are not
expected to satisfy the microscopic equations in the strong sense, since
the cell correctors belong only to $H^1_{\per,0}(Y_p)$.
We therefore define the residuals only through the weak formulations,
in the spirit of the variational unfolding error method.

For a.e. \(t\in(0,T)\), define
\(\mathcal R_1^\varepsilon(t)\in H^1(\Omega_p^\varepsilon)'\) by
\begin{equation}
	\label{eq:weak-residual1}
	\bigl\langle \mathcal R_1^\varepsilon(t),\zeta\bigr\rangle
	:=
	\bigl\langle \partial_t\Phi^\varepsilon(t),\zeta\bigr\rangle
	+
	\int_{\Omega_p^\varepsilon}
	\nabla M^\varepsilon(t)\cdot\nabla\zeta\,dx
	+
	\int_{\Omega_p^\varepsilon}
	G(\Phi^\varepsilon(t))\zeta\,dx
\end{equation}
for all \(\zeta\in H^1(\Omega_p^\varepsilon)\). Similarly, define
\(\mathcal R_2^\varepsilon(t)\in H^1(\Omega_p^\varepsilon)'\) by
\begin{equation}
	\label{eq:weak-residual2}
	\bigl\langle \mathcal R_2^\varepsilon(t),\eta\bigr\rangle
	:=
	\int_{\Omega_p^\varepsilon}
	M^\varepsilon(t)\eta\,dx
	-
	\int_{\Omega_p^\varepsilon}
	\nabla\Phi^\varepsilon(t)\cdot\nabla\eta\,dx
	-
	\int_{\Omega_p^\varepsilon}
	F'(\Phi^\varepsilon(t))\eta\,dx
\end{equation}
for all \(\eta\in H^1(\Omega_p^\varepsilon)\).

The estimates of Section~\ref{subsec:corrector-approx} imply
\[
\Phi^\varepsilon\in L^\infty(0,T;H^1(\Omega_p^\varepsilon)),
\qquad
M^\varepsilon\in L^2(0,T;H^1(\Omega_p^\varepsilon)),
\]
and
\[
\partial_t\Phi^\varepsilon\in L^2(0,T;L^2(\Omega_p^\varepsilon)).
\]
Hence
\[
\mathcal R_1^\varepsilon
\in L^2(0,T;H^1(\Omega_p^\varepsilon)'),
\qquad
\mathcal R_2^\varepsilon
\in L^2(0,T;H^1(\Omega_p^\varepsilon)').
\]

Set
\[
e_\phi^\varepsilon:=\phi_\varepsilon-\Phi^\varepsilon,
\qquad
e_\mu^\varepsilon:=\mu_\varepsilon-M^\varepsilon .
\]
Subtracting \eqref{eq:weak-residual1} from the microscopic weak formulation
gives, for all \(\zeta\in H^1(\Omega_p^\varepsilon)\) and a.e.
\(t\in(0,T)\),
\begin{equation}
	\label{eq:error-phase-weak}
	\bigl\langle \partial_t e_\phi^\varepsilon(t),\zeta\bigr\rangle
	+
	\int_{\Omega_p^\varepsilon}
	\nabla e_\mu^\varepsilon(t)\cdot\nabla\zeta\,dx
	+
	\int_{\Omega_p^\varepsilon}
	\bigl(G(\phi_\varepsilon(t))-G(\Phi^\varepsilon(t))\bigr)\zeta\,dx
	=
	-
	\bigl\langle \mathcal R_1^\varepsilon(t),\zeta\bigr\rangle .
\end{equation}
Likewise, subtracting \eqref{eq:weak-residual2} from the microscopic
chemical-potential identity gives, for all
\(\eta\in H^1(\Omega_p^\varepsilon)\) and a.e. \(t\in(0,T)\),
\begin{equation}
	\label{eq:error-chem-weak}
	\int_{\Omega_p^\varepsilon}
	e_\mu^\varepsilon(t)\eta\,dx
	=
	\int_{\Omega_p^\varepsilon}
	\nabla e_\phi^\varepsilon(t)\cdot\nabla\eta\,dx
	+
	\int_{\Omega_p^\varepsilon}
	\bigl(F'(\phi_\varepsilon(t))-F'(\Phi^\varepsilon(t))\bigr)\eta\,dx
	-
	\bigl\langle \mathcal R_2^\varepsilon(t),\eta\bigr\rangle .
\end{equation}

\begin{lemma}[Porosity oscillation estimate]
	\label{lem:porosity-oscillation}
	Let \(h\in H^1(\Omega)\), \(v\in H^1(\Omega_p^\varepsilon)\), and let
	\(V_\varepsilon:=P_\varepsilon v\in H^1(\Omega)\) be a uniform extension of
	\(v\), that is,
	\[
	V_\varepsilon=v\quad\text{in }\Omega_p^\varepsilon,
	\qquad
	\|V_\varepsilon\|_{H^1(\Omega)}
	\leq
	C\|v\|_{H^1(\Omega_p^\varepsilon)} .
	\]
	Then
	\begin{equation}
		\label{eq:porosity-oscillation}
		\left|
		\int_{\Omega_p^\varepsilon}h v\,dx
		-
		\theta_p\int_\Omega h V_\varepsilon\,dx
		\right|
		\leq
		C\varepsilon
		\|h\|_{H^1(\Omega)}
		\|v\|_{H^1(\Omega_p^\varepsilon)} .
	\end{equation}
\end{lemma}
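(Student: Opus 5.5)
The plan is to reduce the estimate to an oscillation bound for the product $g:=hV_\varepsilon$ against the mean-zero periodic function $\rho(y):=\mathbf 1_{Y_p}(y)-\theta_p$, plus a collar estimate near $\partial\Omega$. Since $V_\varepsilon=v$ on $\Omega_p^\varepsilon$, we have
\[
\int_{\Omega_p^\varepsilon}hv\,dx-\theta_p\int_\Omega hV_\varepsilon\,dx=\int_\Omega\bigl(\mathbf 1_{\Omega_p^\varepsilon}-\theta_p\bigr)g\,dx .
\]
By Cauchy--Schwarz and the product rule, $g\in W^{1,1}(\Omega)$ with
\[
\|g\|_{W^{1,1}(\Omega)}\le C\|h\|_{H^1(\Omega)}\|V_\varepsilon\|_{H^1(\Omega)}\le C\|h\|_{H^1(\Omega)}\|v\|_{H^1(\Omega_p^\varepsilon)},
\]
where the last step uses the uniform extension bound from Remark~\ref{rem:frame}. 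It therefore suffices to show that $\bigl|\int_\Omega(\mathbf 1_{\Omega_p^\varepsilon}-\theta_p)g\bigr|\le C\varepsilon\|g\|_{W^{1,1}(\Omega)}$. We split $\Omega=\Omega_\varepsilon\cup\Lambda_\varepsilon$.

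\textbf{Interior cells.} On $\Omega_\varepsilon$, which is a union of full cells $\varepsilon(\kappa+Y)$ with $\kappa\in K_\varepsilon$, the geometry gives $\mathbf 1_{\Omega_p^\varepsilon}(x)=\mathbf 1_{Y_p}(x/\varepsilon)$. Hence on each cell the integrand is $\rho(x/\varepsilon)g$, and $\int_{\varepsilon(\kappa+Y)}\rho(x/\varepsilon)\,dx=0$. Subtracting the cell average $g_\kappa$ of $g$ and using $|\rho|\le1$ together with the scaled $L^1$ Poincaré--Wirtinger inequality on the unit cube gives
\[
\Bigl|\int_{\varepsilon(\kappa+Y)}\rho(\tfrac{x}{\varepsilon})g\,dx\Bigr|=\Bigl|\int_{\varepsilon(\kappa+Y)}\rho(\tfrac{x}{\varepsilon})(g-g_\kappa)\,dx\Bigr|\le C\varepsilon\|\nabla g\|_{L^1(\varepsilon(\kappa+Y))}.
\]
Summing over $\kappa\in K_\varepsilon$ yields the bound $C\varepsilon\|\nabla g\|_{L^1(\Omega)}$ for the interior contribution.

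\textbf{Boundary layer.} On $\Lambda_\varepsilon$ there are no perforations, so $\mathbf 1_{\Omega_p^\varepsilon}=1$ there and the contribution is $(1-\theta_p)\int_{\Lambda_\varepsilon}g$. Every cell meeting $\partial\Omega$ has diameter $\sqrt d\,\varepsilon$, so
\[
\Lambda_\varepsilon\subset\{x\in\Omega:\operatorname{dist}(x,\partial\Omega)<\sqrt d\,\varepsilon\}.
\]
I will then invoke the standard collar estimate for Lipschitz domains: $\int_{\{\operatorname{dist}<\delta\}}|g|\le C\delta\|g\|_{W^{1,1}(\Omega)}$ for $\delta$ small. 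This follows by flattening $\partial\Omega$ locally with bi-Lipschitz charts and integrating the one-dimensional bound $|g(x',s)|\le|g(x',s_0)|+\int|\partial_s g|$ across the collar. Averaging over $s_0$ in a fixed-width strip avoids needing traces explicitly; alternatively one can use the $W^{1,1}$ trace inequality. This gives $\bigl|\int_{\Lambda_\varepsilon}g\bigr|\le C\varepsilon\|g\|_{W^{1,1}(\Omega)}$.

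Combining the two contributions with the bound on $\|g\|_{W^{1,1}(\Omega)}$ proves \eqref{eq:porosity-oscillation}. The only step that is not a one-line cell computation is the collar estimate. The point to keep in mind there is that $g$ is merely $W^{1,1}$, so the layer bound must be derived in $L^1$, with constants depending only on the Lipschitz character of $\Omega$, rather than through $|\Lambda_\varepsilon|^{1/2}$ and an $L^2$ bound. The latter route would only give $\varepsilon^{1/2}$, or would require $g\in L^\infty$.
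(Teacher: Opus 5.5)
Your proof is correct and follows essentially the same argument as the paper: the same splitting $\Omega=\Omega_\varepsilon\cup\Lambda_\varepsilon$, the same zero-mean weight $\rho=\mathbf 1_{Y_p}-\theta_p$ with a rescaled $W^{1,1}$ Poincar\'e--Wirtinger inequality on each full cell, and a collar estimate on $\Lambda_\varepsilon$. The only difference is in the layer: you apply an $L^1$ collar bound to the product $g=hV_\varepsilon\in W^{1,1}(\Omega)$, whereas the paper applies the $H^1$ collar bound $\|z\|_{L^2(\Lambda_\varepsilon)}\le C\varepsilon^{1/2}\|z\|_{H^1(\Omega)}$ to $h$ and $V_\varepsilon$ separately and then uses Cauchy--Schwarz. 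Consequently your closing remark that the layer bound must be derived in $L^1$ is overstated: the paper's factor-by-factor $L^2$ route already gives $\varepsilon^{1/2}\cdot\varepsilon^{1/2}=\varepsilon$ without any $L^\infty$ bound on $g$, though this does not affect the validity of your argument.
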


\begin{proof}
	Since \(V_\varepsilon=v\) in \(\Omega_p^\varepsilon\), we have
	\[
	\int_{\Omega_p^\varepsilon}h v\,dx
	-
	\theta_p\int_\Omega h V_\varepsilon\,dx
	=
	\int_\Omega
	\bigl(\mathbf 1_{\Omega_p^\varepsilon}-\theta_p\bigr)
	h V_\varepsilon\,dx .
	\]
	We split \(\Omega=\Omega_\varepsilon\cup\Lambda_\varepsilon\), where
	\(\Omega_\varepsilon\) is the union of complete cells and
	\(\Lambda_\varepsilon:=\Omega\setminus\Omega_\varepsilon\).

	Set
	\[
	\rho(y):=\mathbf 1_{Y_p}(y)-\theta_p .
	\]
	Then \(\rho\in L^\infty(Y)\) and
	\[
	\int_Y \rho(y)\,dy=0 .
	\]
	For \(k\in K_\varepsilon\), let
	\[
	g(x):=h(x)V_\varepsilon(x).
	\]
	By the zero-mean property of \(\rho\) and the Poincar\'e--Wirtinger
	inequality in \(W^{1,1}(Y)\), applied after rescaling to the cell
	\(\varepsilon(k+Y)\), we obtain
	\[
	\left|
	\int_{\varepsilon(k+Y)}
	\rho\left(\frac{x}{\varepsilon}\right) g(x)\,dx
	\right|
	\leq
	C\varepsilon
	\int_{\varepsilon(k+Y)}
	|\nabla g(x)|\,dx .
	\]
	Summing over \(k\in K_\varepsilon\), using
	\[
	|\nabla(hV_\varepsilon)|
	\leq
	|\nabla h|\,|V_\varepsilon|
	+
	|h|\,|\nabla V_\varepsilon|,
	\]
	and applying Cauchy's inequality, gives
	\begin{align}
		\left|
		\int_{\Omega_\varepsilon}
		\bigl(\mathbf 1_{\Omega_p^\varepsilon}-\theta_p\bigr)
		h V_\varepsilon\,dx
		\right|
		&\leq
		C\varepsilon
		\int_{\Omega_\varepsilon}
		\left(
		|\nabla h|\,|V_\varepsilon|
		+
		|h|\,|\nabla V_\varepsilon|
		\right)\,dx
		\nonumber\\
		&\leq
		C\varepsilon
		\|h\|_{H^1(\Omega)}
		\|V_\varepsilon\|_{H^1(\Omega)} .
		\label{eq:porosity-complete-cells}
	\end{align}
	Since \(\Omega\) is Lipschitz, $|\Lambda_\varepsilon|\leq C\varepsilon$.
	Moreover, the standard collar estimate gives, for every
	\(z\in H^1(\Omega)\),
	\[
	\|z\|_{L^2(\Lambda_\varepsilon)}
	\leq
	C\varepsilon^{1/2}\|z\|_{H^1(\Omega)} .
	\]
	Hence
	\begin{align}
		\left|
		\int_{\Lambda_\varepsilon}
		\bigl(\mathbf 1_{\Omega_p^\varepsilon}-\theta_p\bigr)
		h V_\varepsilon\,dx
		\right|
		&\leq
		C\|h\|_{L^2(\Lambda_\varepsilon)}
		\|V_\varepsilon\|_{L^2(\Lambda_\varepsilon)}
		\leq
		C\varepsilon
		\|h\|_{H^1(\Omega)}
		\|V_\varepsilon\|_{H^1(\Omega)} .
		\label{eq:porosity-boundary-layer}
	\end{align}
	Combining \eqref{eq:porosity-complete-cells} and
	\eqref{eq:porosity-boundary-layer}, and using the extension estimate for
	\(V_\varepsilon=P_\varepsilon v\), gives \eqref{eq:porosity-oscillation}.
\end{proof}

\begin{lemma}[Elliptic consistency estimate]
	\label{lem:elliptic-consistency}
	Let \(w\in H^2(\Omega)\). Define
	\[
	\mathcal J_w^\varepsilon
	:=
	\sum_{i=1}^d
	\Qc_\varepsilon(\partial_{x_i}w)
	\left(
	\be_i+\nabla_y\chi_i\left(\frac{x}{\varepsilon}\right)
	\right)
	\quad\text{in }\Omega_p^\varepsilon .
	\]
	Then, for every \(v\in H^1(\Omega_p^\varepsilon)\), with
	\(V_\varepsilon:=P_\varepsilon v\), one has
	\begin{equation}
		\label{eq:griso-consistency-fixed}
		\left|
		\int_{\Omega_p^\varepsilon}
		\mathcal J_w^\varepsilon\cdot\nabla v\,dx
		-
		\theta_p
		\int_\Omega
		\B^{\mathrm{hom}}\nabla w\cdot\nabla V_\varepsilon\,dx
		\right|
		\leq
		C\varepsilon^{1/2}
		\|w\|_{H^2(\Omega)}
		\|v\|_{H^1(\Omega_p^\varepsilon)} .
	\end{equation}
\end{lemma}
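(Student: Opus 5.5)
Because $V_\varepsilon=v$ on $\Omega_p^\varepsilon$, the left-hand side of \eqref{eq:griso-consistency-fixed} can be written entirely over $\Omega_p^\varepsilon$ and compared cell by cell with the homogenized flux. The plan is to split $\Omega=\Omega_\varepsilon\cup\Lambda_\varepsilon$ and treat complete cells and the collar separately. On each complete cell $\varepsilon(k+Y)$, $k\in K_\varepsilon$, the pore part is $\varepsilon(k+Y_p)$, and $\mathcal J^\varepsilon_w$ equals $\Qc_\varepsilon(\partial_{x_i}w)\,b_i(x/\varepsilon)$ with $b_i:=\be_i+\nabla_y\chi_i$. The key structural facts about $b_i$ are three. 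First, $\operatorname{div}_y b_i=0$ in $Y_p$ with zero normal flux on $\partial Y_s$, in the weak sense \eqref{eq:cell-problem}; this is also \eqref{eq:aux-flux-neumann}. Second, periodicity holds. Third, $\frac{1}{|Y_p|}\int_{Y_p}b_i\,dy=\B^{\mathrm{hom}}\be_i$. The third follows from \eqref{eq:Bhom} after testing the cell problem for $\chi_j$ with $\chi_i$, which gives $\int_{Y_p}b_i\cdot\nabla\chi_j=0$.

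\textbf{Step 1 (replace $\Qc_\varepsilon$ by $\partial_{x_i}w$ where convenient).} By \eqref{eq:Q-product-estimate} and \eqref{eq:Q-approx-dphi}, replacing the coefficient $\Qc_\varepsilon(\partial_{x_i}w)$ costs at most $C\varepsilon\|w\|_{H^2}\|\nabla v\|_{L^2}$. We keep $\Qc_\varepsilon$ in the oscillating term, however, because its $W^{1,\infty}$-per-cell structure and \eqref{eq:P5} are what make the integration by parts legitimate.

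\textbf{Step 2 (oscillation via a flux potential).} Write $b_i=|Y_p|^{-1}\int_{Y_p} b_i+\tilde b_i$, extending $b_i$ by zero in $Y_s$. Then $\tilde b_i:=\mathbf 1_{Y_p}b_i-\theta_p\B^{\mathrm{hom}}\be_i$ is a $Y$-periodic, divergence-free field on all of $Y$ with zero mean. The zero-normal-flux condition on $\partial Y_s$ is exactly what gives divergence-freeness across the inclusion. By the standard lemma, $\tilde b_i=\operatorname{div}_y A_i$ with $A_i$ skew-symmetric and in $H^1_\per(Y)^{d\times d}$; one solves $\Delta_y N_i=\tilde b_i$ periodically and sets $A_i^{jk}=\partial_jN_i^k-\partial_kN_i^j$. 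Then on $\Omega_\varepsilon$ the field $\tilde b_i(x/\varepsilon)=\varepsilon\operatorname{div}(A_i(x/\varepsilon))$. Integrating by parts against $\Qc_\varepsilon(\partial_{x_i}w)\nabla V_\varepsilon$, and using $\operatorname{div}\operatorname{div}$-skew-symmetry (so $\int A:\nabla^2V=0$), leaves two kinds of terms. The first is an interior term $\varepsilon\int A_i(x/\varepsilon)\nabla\Qc_\varepsilon(\partial_{x_i}w)\cdot\nabla V_\varepsilon$, which is $O(\varepsilon)$ by \eqref{eq:P5}. The second is a boundary contribution on $\partial\Omega_\varepsilon$, which we must avoid.

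\textbf{Step 3 (cutoff and collar).} To kill the boundary term, insert a cutoff $\eta_\varepsilon\in C_c^\infty(\Omega_\varepsilon)$ with $\eta_\varepsilon=1$ outside an $O(\varepsilon)$-neighbourhood of $\partial\Omega$ and $|\nabla\eta_\varepsilon|\le C/\varepsilon$. The term with $\nabla\eta_\varepsilon$ gives $\varepsilon\cdot\varepsilon^{-1}\|A_i(x/\varepsilon)\Qc_\varepsilon(\partial_iw)\|_{L^2(\text{collar})}\|\nabla V_\varepsilon\|$. Applying the collar estimate $\|z\|_{L^2(\text{collar})}\le C\varepsilon^{1/2}\|z\|_{H^1}$ cell-wise to $\partial_iw\in H^1$ together with the periodic $L^2$ bound of $A_i$ makes this $C\varepsilon^{1/2}\|w\|_{H^2}\|v\|_{H^1}$. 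The remaining region $(1-\eta_\varepsilon)$, together with $\Lambda_\varepsilon$, has measure $O(\varepsilon)$. There, the integrals of $\mathcal J^\varepsilon_w\cdot\nabla v$ and $\theta_p\B^{\mathrm{hom}}\nabla w\cdot\nabla V_\varepsilon$ are bounded separately by $\|\partial_iw\|_{L^2(\text{collar})}\|\nabla V_\varepsilon\|_{L^2}$, again $C\varepsilon^{1/2}$ via the collar estimate. Here the product estimate \eqref{eq:Q-product-estimate} is used localized to the collar, and the extended correctors $\chi_i\in H^1_\per(Y)$ are used in the incomplete cells. Note that $V_\varepsilon$ only has an $L^2$ gradient, so the whole $\varepsilon^{1/2}$ loss is borne by $w$.

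The main obstacle I expect is Step 2's construction of the skew potential on the perforated cell. It needs the weak no-flux condition to show $\mathbf 1_{Y_p}b_i$ is divergence-free on $Y$, and it needs $A_i\in L^2$ only, so that \eqref{eq:P5} suffices without $W^{1,\infty}$ correctors. I would also carefully check that the integration by parts against $\Qc_\varepsilon(\partial_iw)\eta_\varepsilon\nabla V_\varepsilon$ is justified by density, approximating $V_\varepsilon$ by smooth functions.
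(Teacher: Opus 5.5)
Your argument is correct, but on the complete cells it takes a different route from the paper. The paper proceeds in three moves. First, it unfolds and freezes $\mathcal T_\varepsilon(\Qc_\varepsilon(\partial_{x_i}w))$ to $\Qc_\varepsilon(\partial_{x_i}w)(x)$. Second, it quotes Griso's decomposition $\mathcal T_\varepsilon(\nabla V_\varepsilon)=\nabla_xV_\varepsilon+\nabla_y\hat\psi^\varepsilon+r^\varepsilon$ with $r^\varepsilon=O(\varepsilon)$ in $L^2(Y;H^{-1}(\Omega_\varepsilon))$. Third, the $\nabla_y\hat\psi^\varepsilon$ part is annihilated by \eqref{eq:cell-problem}, and the $\nabla_xV_\varepsilon$ part yields $\theta_p\B^{\mathrm{hom}}$ through the cell average of $b_i$. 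The boundary layer is then treated exactly as in your Step~3, by the inverse estimate for $Q_1$ functions plus the collar estimate for $\mathcal E_\Omega(\partial_{x_i}w)$.

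You instead write the zero-extended, mean-corrected flux as $\operatorname{div}_y A_i$ with $A_i$ skew-symmetric, and integrate by parts once against a cutoff. The paper's route needs no cell object beyond $\chi_i$ and stays within the unfolding machinery. However, it rests on a quoted negative-norm remainder that must be paired with $\Qc_\varepsilon(\partial_{x_i}w)\,b_i$, which does not vanish on $\partial\Omega_\varepsilon$, so that step needs care. Your route costs one auxiliary periodic problem, but it buys several things:
\begin{itemize}
\item every term is estimated in $L^2$;
\item the whole $\varepsilon^{1/2}$ loss sits visibly in the term where $\varepsilon|\nabla\eta_\varepsilon|\le C$ meets the collar;
\item the torus case is the same computation without $\eta_\varepsilon$;
\item the flux corrector is the standard ingredient of the Kenig--Lin--Shen duality argument toward $O(\varepsilon)$ mentioned in the paper's last section.
\end{itemize}
What you flag as the main obstacle is in fact immediate. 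Restrictions to $Y_p$ of smooth $Y$-periodic functions are admissible in \eqref{eq:cell-problem}, since constants contribute nothing. Hence $\operatorname{div}_y(\mathbf 1_{Y_p}b_i)=0$ on the torus.

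A few details to tidy.
\begin{itemize}
\item In Step~1, replacing $\Qc_\varepsilon(\partial_{x_i}w)$ by $\partial_{x_i}w$ is valid only in the constant-coefficient term $\theta_p\B^{\mathrm{hom}}\Qc_\varepsilon(\nabla w)\cdot\nabla V_\varepsilon$, where \eqref{eq:Q-approx-dphi} alone suffices. \eqref{eq:Q-product-estimate} does not control $(\partial_{x_i}w-\Qc_\varepsilon\partial_{x_i}w)\,b_i(x/\varepsilon)$. With only $b_i\in L^2(Y_p)$ at hand, $\partial_{x_i}w\,b_i(x/\varepsilon)$ is not even known to lie in $L^2$. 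So keeping $\Qc_\varepsilon$ in the oscillating term is necessary, not merely convenient.
\item Since $A_i(x/\varepsilon)$ also lives on the solid inclusions, which is why you need $V_\varepsilon$ there, \eqref{eq:P5} and the localized product estimate must be used on $\Omega$ and on an $O(\varepsilon)$-neighbourhood of $\partial\Omega$ in $\mathbb R^d$, rather than on $\Omega_p^\varepsilon$. The same cell-by-cell proof gives this.
\item The mean splitting should read $\mathbf 1_{Y_p}b_i=\theta_p\B^{\mathrm{hom}}\be_i+\tilde b_i$, consistent with your definition of $\tilde b_i$. The version $b_i=|Y_p|^{-1}\int_{Y_p}b_i+\tilde b_i$ does not produce a field with zero mean over $Y$, which you need in order to solve $\Delta_yN_i=\tilde b_i$.
\end{itemize}
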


\begin{proof}
	For $i=1,\ldots,d$, set
	\[
	b_i(y) := \mathbf{e}_i + \nabla_y\chi_i(y) \in L^2(Y_p)^d.
	\]
	The cell problem~\eqref{eq:cell-problem} gives, in the weak sense,
	\[
	\mathrm{div}_y\, b_i = 0 \quad\text{in }Y_p,
	\qquad
	b_i\cdot\bn_y = 0 \quad\text{on }\partial Y_s,
	\]
	and $b_i$ is $Y$-periodic. By the definition~\eqref{eq:Bhom} of
	$\B^{\mathrm{hom}}$ and the cell problem~\eqref{eq:cell-problem},
	\begin{equation}
		\label{eq:cell-average-flux}
		\int_{Y_p} b_i(y)\cdot\be_j\,dy = \theta_p \B^{\mathrm{hom}}_{ij},
		\qquad i,j=1,\ldots,d.
	\end{equation}
	Since $V_\varepsilon=v$ on $\Omega_p^\varepsilon$, we have
	$\int_{\Omega_p^\varepsilon}\mathcal{J}_w^\varepsilon\cdot\nabla v\,dx
	=\int_{\Omega_p^\varepsilon}\mathcal{J}_w^\varepsilon\cdot\nabla V_\varepsilon\,dx$.
	We split
	\[
	R_\varepsilon
	:=
	\int_{\Omega_p^\varepsilon}\mathcal{J}_w^\varepsilon\cdot\nabla V_\varepsilon\,dx
	-
	\theta_p\int_\Omega\B^{\mathrm{hom}}\nabla w\cdot\nabla V_\varepsilon\,dx
	=:R_\varepsilon^{(1)}+R_\varepsilon^{(2)},
	\]
	where $R_\varepsilon^{(1)}$ collects the contribution from the complete cells
	$\Omega_\varepsilon$ and $R_\varepsilon^{(2)}$ collects the boundary layer
	$\Lambda_\varepsilon:=\Omega\setminus\Omega_\varepsilon$.

	\textbf{Step 1: Complete cells.}
	By the unfolding criterion~\eqref{EQUCI} and the $L^2$-stability of
	$\mathcal{T}_\varepsilon$,
	\[
	\int_{\Omega_\varepsilon\cap\Omega_p^\varepsilon}
	\mathcal{J}_w^\varepsilon\cdot\nabla V_\varepsilon\,dx
	=
	\frac{1}{|Y|}
	\int_{\Omega_\varepsilon}\int_{Y_p}
	\sum_{i=1}^d
	\mathcal{T}_\varepsilon(Q_\varepsilon(\partial_{x_i}w))(x,y)\,
	b_i(y)\cdot
	\mathcal{T}_\varepsilon(\nabla V_\varepsilon)(x,y)\,dy\,dx.
	\]
	We replace $\mathcal{T}_\varepsilon(Q_\varepsilon(\partial_{x_i}w))$ by
	$Q_\varepsilon(\partial_{x_i}w)(x)$. By the standard unfolding approximation
	estimate~\cite[Equation~(3.4)]{Griso2004},
	\[
	\|\mathcal{T}_\varepsilon(Q_\varepsilon(\partial_{x_i}w))
	-Q_\varepsilon(\partial_{x_i}w)\|_{L^2(\Omega_\varepsilon\times Y)}
	\leq
	C\varepsilon\|\nabla Q_\varepsilon(\partial_{x_i}w)\|_{L^2(\Omega)}
	\leq
	C\varepsilon\|w\|_{H^2(\Omega)},
	\]
	where the last step uses the $H^1$-stability of $Q_\varepsilon$
	(property~\eqref{eq:Q-H1-stability}) and $\partial_{x_i}w\in H^1(\Omega)$.
	Since $\|b_i\|_{L^2(Y_p)}\leq C$ and
	$\|\mathcal{T}_\varepsilon(\nabla V_\varepsilon)\|_{L^2(\Omega_\varepsilon\times Y_p)}
	\leq C\|\nabla V_\varepsilon\|_{L^2(\Omega)}$,
	the replacement error is bounded by
	$C\varepsilon\|w\|_{H^2(\Omega)}\|\nabla V_\varepsilon\|_{L^2(\Omega)}$.

	It remains to estimate
	\[
	S_\varepsilon
	:=
	\frac{1}{|Y|}
	\int_{\Omega_\varepsilon}\int_{Y_p}
	\sum_{i=1}^d
	Q_\varepsilon(\partial_{x_i}w)(x)\,
	b_i(y)\cdot\mathcal{T}_\varepsilon(\nabla V_\varepsilon)(x,y)\,dy\,dx
	-
	\theta_p\int_{\Omega_\varepsilon}
	\B^{\mathrm{hom}}\nabla w\cdot\nabla V_\varepsilon\,dx.
	\]
Apply Theorem~3.4 of Griso~\cite{Griso2004} to
\(V_\varepsilon\in H^1(\Omega)\). There exists
\(\hat{\psi}^\varepsilon\in H^1_{\per}(Y;L^2(\Omega))\), equivalently
\(\hat{\psi}^\varepsilon\in L^2(\Omega;H^1_{\per}(Y))\), such that
\begin{equation}
	\label{eq:griso-34}
	\begin{aligned}
		\bigl\|
		\mathcal{T}_\varepsilon(\nabla V_\varepsilon)
		-\nabla_x V_\varepsilon
		-\nabla_y\hat{\psi}^\varepsilon
		\bigr\|_{[L^2(Y;H^{-1}(\Omega_\varepsilon))]^d}
		&\leq
		C\varepsilon\|\nabla V_\varepsilon\|_{L^2(\Omega)},\\
		\|\hat{\psi}^\varepsilon\|_{H^1(Y;L^2(\Omega))}
		&\leq
		C\|\nabla V_\varepsilon\|_{L^2(\Omega)}.
	\end{aligned}
\end{equation}
We write
\[
\mathcal{T}_\varepsilon(\nabla V_\varepsilon)
=
\nabla_x V_\varepsilon+\nabla_y\hat{\psi}^\varepsilon+r^\varepsilon,
\]
where
\[
\|r^\varepsilon\|_{[L^2(Y;H^{-1}(\Omega_\varepsilon))]^d}
\leq
C\varepsilon\|\nabla V_\varepsilon\|_{L^2(\Omega)}.
\]
\textbf{Estimate of the \(r^\varepsilon\)-term.}
Since \(Q_\varepsilon\) is \(H^1\)-stable and
\(\partial_{x_i}w\in H^1(\Omega)\), we have
\[
Q_\varepsilon(\partial_{x_i}w)\in H^1(\Omega),
\qquad
\|Q_\varepsilon(\partial_{x_i}w)\|_{H^1(\Omega)}
\leq
C\|w\|_{H^2(\Omega)},
\]
by property~\eqref{eq:Q-H1-stability}. Hence
\(Q_\varepsilon(\partial_{x_i}w)|_{\Omega_\varepsilon}\in H^1(\Omega_\varepsilon)\)
with the same bound. Since
\(r^\varepsilon\in [L^2(Y;H^{-1}(\Omega_\varepsilon))]^d\), the term involving
\(r^\varepsilon\) is understood as the duality pairing
\[
\frac{1}{|Y|}
\int_{Y_p}
\left\langle
r^\varepsilon(\cdot,y),
Q_\varepsilon(\partial_{x_i}w)(\cdot)b_i(y)
\right\rangle_
{[H^{-1}(\Omega_\varepsilon)]^d,[H^1(\Omega_\varepsilon)]^d}
\,dy .
\]
Therefore,
\[
\begin{aligned}
	&\left|
	\frac{1}{|Y|}
	\int_{Y_p}
	\left\langle
	r^\varepsilon(\cdot,y),
	Q_\varepsilon(\partial_{x_i}w)(\cdot)b_i(y)
	\right\rangle_
	{[H^{-1}(\Omega_\varepsilon)]^d,[H^1(\Omega_\varepsilon)]^d}
	\,dy
	\right|
	\\
	&\qquad\leq
	C
	\|r^\varepsilon\|_{[L^2(Y;H^{-1}(\Omega_\varepsilon))]^d}
	\,
	\|Q_\varepsilon(\partial_{x_i}w)b_i\|_{[L^2(Y_p;H^1(\Omega_\varepsilon))]^d}
	\\
	&\qquad\leq
	C
	\|r^\varepsilon\|_{[L^2(Y;H^{-1}(\Omega_\varepsilon))]^d}
	\,
	\|Q_\varepsilon(\partial_{x_i}w)\|_{H^1(\Omega_\varepsilon)}
	\|b_i\|_{L^2(Y_p)}
	\\
	&\qquad\leq
	C\varepsilon
	\|w\|_{H^2(\Omega)}
	\|\nabla V_\varepsilon\|_{L^2(\Omega)}.
\end{aligned}
\]
\textbf{Cancellation of the \(\nabla_y\hat{\psi}^\varepsilon\)-term.}
Since \(Q_\varepsilon(\partial_{x_i}w)(x)\) is independent of \(y\), we have
\[
\frac{1}{|Y|}
\int_{\Omega_\varepsilon}\int_{Y_p}
Q_\varepsilon(\partial_{x_i}w)(x)\,
b_i(y)\cdot\nabla_y\hat{\psi}^\varepsilon(x,y)\,dy\,dx
=
\frac{1}{|Y|}
\int_{\Omega_\varepsilon}
Q_\varepsilon(\partial_{x_i}w)(x)
\left[
\int_{Y_p}b_i(y)\cdot\nabla_y\hat{\psi}^\varepsilon(x,y)\,dy
\right]dx.
\]
By Fubini, for a.e. \(x\in\Omega_\varepsilon\) the function
\(y\mapsto\hat{\psi}^\varepsilon(x,y)|_{Y_p}\) belongs to
\(H^1_{\per}(Y_p)\). After subtracting its \(Y_p\)-mean, it is an
admissible test function in the cell problem~\eqref{eq:cell-problem}. Since
\[
b_i=e_i+\nabla_y\chi_i
\]
satisfies
\[
\int_{Y_p} b_i(y)\cdot\nabla_y\zeta(y)\,dy=0
\qquad
\text{for all }\zeta\in H^1_{\per}(Y_p),
\]
we obtain
\[
\int_{Y_p}b_i(y)\cdot\nabla_y\hat{\psi}^\varepsilon(x,y)\,dy=0
\qquad
\text{for a.e. }x\in\Omega_\varepsilon.
\]
Hence the \(\nabla_y\hat{\psi}^\varepsilon\)-contribution vanishes.

\textbf{Identification of the \(\nabla_x V_\varepsilon\)-term.}
The remaining contribution over the complete cells is
\[
\frac{1}{|Y|}
\int_{\Omega_\varepsilon}\int_{Y_p}
\sum_{i=1}^d
Q_\varepsilon(\partial_{x_i}w)(x)\,
b_i(y)\cdot\nabla_x V_\varepsilon(x)\,dy\,dx .
\]
Thus
\[
\begin{aligned}
	&\frac{1}{|Y|}
	\int_{\Omega_\varepsilon}\int_{Y_p}
	\sum_{i=1}^d
	Q_\varepsilon(\partial_{x_i}w)(x)\,
	b_i(y)\cdot\nabla_x V_\varepsilon(x)\,dy\,dx
	\\
	&\qquad =
	\frac{1}{|Y|}
	\int_{\Omega_\varepsilon}
	\sum_{i=1}^d
	Q_\varepsilon(\partial_{x_i}w)(x)
	\left[
	\int_{Y_p}b_i(y)\,dy
	\right]\cdot\nabla_x V_\varepsilon(x)\,dx .
\end{aligned}
\]
By the cell-average identity~\eqref{eq:cell-average-flux}, this equals
\[
\theta_p
\int_{\Omega_\varepsilon}
\B^{\mathrm{hom}}Q_\varepsilon(\nabla w)\cdot\nabla V_\varepsilon\,dx .
\]
Finally, using the approximation property~\eqref{eq:Q-H1-stability}$_3$,
\[
\|Q_\varepsilon(\partial_{x_i}w)-\partial_{x_i}w\|_{L^2(\Omega)}
\leq
C\varepsilon\|\nabla\partial_{x_i}w\|_{L^2(\Omega)}
\leq
C\varepsilon\|w\|_{H^2(\Omega)}.
\]
Therefore,
\[
\left|
\theta_p
\int_{\Omega_\varepsilon}
\B^{\mathrm{hom}}
\bigl(Q_\varepsilon(\nabla w)-\nabla w\bigr)
\cdot\nabla V_\varepsilon\,dx
\right|
\leq
C\varepsilon
\|w\|_{H^2(\Omega)}
\|\nabla V_\varepsilon\|_{L^2(\Omega)}.
\]
Combining the complete-cell contributions gives
\begin{equation}
	\label{eq:complete-cell-final}
	|R_\varepsilon^{(1)}|
	\leq
	C\varepsilon
	\|w\|_{H^2(\Omega)}
	\|\nabla V_\varepsilon\|_{L^2(\Omega)}.
\end{equation}
The boundary layer near
$\partial\Omega$ is accounted for by the $\Lambda_\varepsilon$ contribution in Step 2, which produces the
$\GO(\varepsilon^{1/2})$ collar estimate and is the source of the
$\varepsilon^{1/2}$ rate on bounded domains.	
	
	\textbf{Step 2: Boundary layer.}
	We estimate
	$\|\mathcal{J}_w^\varepsilon\|_{L^2(\Lambda_\varepsilon\cap\Omega_p^\varepsilon)}$.
	On each partial cell $\varepsilon(\xi+Y)$ intersecting $\Lambda_\varepsilon$,
	a change of variables $x=\varepsilon\xi+\varepsilon y$ gives
	\[
	\int_{\varepsilon(\xi+Y)\cap\Omega_p^\varepsilon}
	|\mathcal{J}_w^\varepsilon|^2\,dx
	\leq
	\sum_{i=1}^d
	\|Q_\varepsilon(\partial_{x_i}w)\|^2_{L^\infty(\varepsilon(\xi+Y))}
	\cdot\varepsilon^d\|b_i\|^2_{L^2(Y_p)}.
	\]
Since $\mathcal{Q}_\varepsilon(\partial_{x_i}w)
=\widetilde{\mathcal{Q}}_\varepsilon(\mathcal{E}_\Omega(\partial_{x_i}w))$
is $Q_1$ on $\varepsilon(\xi+Y)$ with nodal values given by averages of
$\mathcal{E}_\Omega(\partial_{x_i}w)$ over neighbouring cells
(which may lie outside $\Omega$),
the inverse estimate for $Q_1$ functions gives
\[
\|\mathcal{Q}_\varepsilon(\partial_{x_i}w)\|_{L^\infty(\varepsilon(\xi+Y))}
\leq
C\varepsilon^{-d/2}
\|\mathcal{E}_\Omega(\partial_{x_i}w)\|_{L^2(\widetilde{\varepsilon(\xi+Y)})},
\]
where $\widetilde{\varepsilon(\xi+Y)}\subset\mathbb{R}^d$ denotes the union of
$\varepsilon(\xi+Y)$ and its $2^d$ neighbours, which need not be subsets of
$\Omega$. Substituting and summing over all partial cells $\xi$ with
$\varepsilon(\xi+Y)\cap\Lambda_\varepsilon\neq\emptyset$, and using bounded
overlap of the neighbourhoods $\widetilde{\varepsilon(\xi+Y)}$,
\[
\|\mathcal{J}_w^\varepsilon\|^2_{L^2(\Lambda_\varepsilon\cap\Omega_p^\varepsilon)}
\leq
C\sum_{i=1}^d\|b_i\|^2_{L^2(Y_p)}
\sum_{\xi\,\mathrm{partial}}
\|\mathcal{E}_\Omega(\partial_{x_i}w)\|^2_{L^2(\widetilde{\varepsilon(\xi+Y)})}.
\]
Applying the collar estimate to
$\mathcal{E}_\Omega(\partial_{x_i}w)\in H^1(\mathbb{R}^d)$,
\[
\sum_{\xi\,\mathrm{partial}}
\|\mathcal{E}_\Omega(\partial_{x_i}w)\|^2_{L^2(\widetilde{\varepsilon(\xi+Y)})}
\leq
C\|\mathcal{E}_\Omega(\partial_{x_i}w)\|^2_{L^2(\Lambda_{2\varepsilon}(\mathbb{R}^d))}
\leq
C\varepsilon\|\mathcal{E}_\Omega w\|^2_{H^2(\mathbb{R}^d)}
\leq
C\varepsilon\|w\|^2_{H^2(\Omega)},
\]
where the last step uses boundedness of $\mathcal{E}_\Omega:H^2(\Omega)\to H^2(\mathbb{R}^d)$.

	Taking square roots,
	\[
	\|\mathcal{J}_w^\varepsilon\|_{L^2(\Lambda_\varepsilon\cap\Omega_p^\varepsilon)}
	\leq
	C\varepsilon^{1/2}\|w\|_{H^2(\Omega)}.
	\]
	By Cauchy--Schwarz,
	\[
	\left|
	\int_{\Lambda_\varepsilon\cap\Omega_p^\varepsilon}
	\mathcal{J}_w^\varepsilon\cdot\nabla V_\varepsilon\,dx
	\right|
	\leq
	C\varepsilon^{1/2}\|w\|_{H^2(\Omega)}\|\nabla V_\varepsilon\|_{L^2(\Omega)}.
	\]
	For the homogenized term on $\Lambda_\varepsilon$, the collar estimate gives
	$\|\nabla w\|_{L^2(\Lambda_\varepsilon)}\leq C\varepsilon^{1/2}\|w\|_{H^2(\Omega)}$,
	hence
	\[
	\theta_p\left|
	\int_{\Lambda_\varepsilon}
	\B^{\mathrm{hom}}\nabla w\cdot\nabla V_\varepsilon\,dx
	\right|
	\leq
	C\varepsilon^{1/2}\|w\|_{H^2(\Omega)}\|\nabla V_\varepsilon\|_{L^2(\Omega)}.
	\]
	Therefore
	\begin{equation}
		\label{eq:layer-final}
		|R_\varepsilon^{(2)}|
		\leq
		C\varepsilon^{1/2}\|w\|_{H^2(\Omega)}\|\nabla V_\varepsilon\|_{L^2(\Omega)}.
	\end{equation}

	\textbf{Step 3: Conclusion.}
	Combining \eqref{eq:complete-cell-final} and \eqref{eq:layer-final}, and
	using $0<\varepsilon<1$ so that $\varepsilon\leq\varepsilon^{1/2}$,
	\[
	|R_\varepsilon|
	\leq
	C\varepsilon^{1/2}\|w\|_{H^2(\Omega)}\|\nabla V_\varepsilon\|_{L^2(\Omega)}
	\leq
	C\varepsilon^{1/2}\|w\|_{H^2(\Omega)}\|v\|_{H^1(\Omega_p^\varepsilon)},
	\]
	where the last step uses the uniform extension estimate
	$\|\nabla V_\varepsilon\|_{L^2(\Omega)}\leq C\|v\|_{H^1(\Omega_p^\varepsilon)}$.
	This proves~\eqref{eq:griso-consistency-fixed}.
\end{proof}

\begin{lemma}[Variational residual estimate]
	\label{lem:residual}
	Assume that the homogenized solution satisfies \eqref{eq:quant-reg}.
	Then there exists \(C>0\), independent of \(\varepsilon\), such that
	\begin{equation}
		\label{eq:residual-bound}
		\|\mathcal R_1^\varepsilon\|_{L^2(0,T;H^1(\Omega_p^\varepsilon)')}
		+
		\|\mathcal R_2^\varepsilon\|_{L^2(0,T;H^1(\Omega_p^\varepsilon)')}
		\leq
		C\varepsilon^{1/2}.
	\end{equation}
\end{lemma}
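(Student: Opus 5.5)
Fix a.e.\ $t$ and a test function $\zeta\in H^1(\Omega_p^\varepsilon)$ with extension $Z_\varepsilon:=P_\varepsilon\zeta$. The plan is to compare each residual with the weak form of the homogenized system \eqref{eq:hom-CH} tested by $Z_\varepsilon\in H^1(\Omega)$, multiplied by $\theta_p$. This comparison is the only place where $(\phi,\mu)$ solving \eqref{eq:hom-CH} enters.

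For $\mathcal R_1^\varepsilon$, subtract $\theta_p$ times the homogenized phase equation,
\[
\theta_p\int_\Omega\bigl(\partial_t\phi\,Z_\varepsilon+\B^{\mathrm{hom}}\nabla\mu\cdot\nabla Z_\varepsilon+G(\phi)Z_\varepsilon\bigr)\,dx=0,
\]
from \eqref{eq:weak-residual1}. The resulting difference splits into three pieces.
\begin{enumerate}
\item[(a)] \emph{Time-derivative term.} Write $\int_{\Omega_p^\varepsilon}\partial_t\Phi^\varepsilon\zeta-\theta_p\int_\Omega\partial_t\phi\, Z_\varepsilon$. Use \eqref{eq:time-corrector-bound} to replace $\partial_t\Phi^\varepsilon$ by $\partial_t\phi$ at cost $C\varepsilon\|\partial_t\phi\|_{H^1}\|\zeta\|_{L^2}$. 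Then apply Lemma~\ref{lem:porosity-oscillation} with $h=\partial_t\phi(t)\in H^1(\Omega)$, which gives $C\varepsilon\|\partial_t\phi\|_{H^1}\|\zeta\|_{H^1}$.
\item[(b)] \emph{Flux term.} Split $\nabla M^\varepsilon=\mathcal J^\varepsilon_\mu+(\nabla M^\varepsilon-\mathcal J^\varepsilon_\mu)$. The second part is $O(\varepsilon)$ in $L^2$ by \eqref{eq:grad-M-flux}. For the first part, Lemma~\ref{lem:elliptic-consistency} with $w=\mu(t)$ gives $C\varepsilon^{1/2}\|\mu(t)\|_{H^2}\|\zeta\|_{H^1}$.
\item[(c)] \emph{Source term.} Since $G$ is Lipschitz, $|G(\Phi^\varepsilon)-G(\phi)|\le C_G|\Phi^\varepsilon-\phi|$, which is $O(\varepsilon)$ by \eqref{eq:corrector-L2-approx}. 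Then Lemma~\ref{lem:porosity-oscillation} with $h=G(\phi(t))\in H^1(\Omega)$, using Lemma~\ref{lem:Gstructure}(iii), handles the porosity mismatch.
\end{enumerate}
Taking the supremum over $\|\zeta\|_{H^1}\le1$, squaring and integrating in time gives the bound, because $\mu\in L^2(0,T;H^2)$, $\partial_t\phi\in L^2(0,T;H^1)$ and $\phi\in L^\infty(0,T;H^1)$.

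For $\mathcal R_2^\varepsilon$, proceed in the same way with $\theta_p$ times the homogenized chemical-potential equation tested by $H_\varepsilon=P_\varepsilon\eta$.
\begin{enumerate}
\item[(a)] \emph{Mass term.} $\int M^\varepsilon\eta$ versus $\theta_p\int\mu H_\varepsilon$ is handled by \eqref{eq:corrector-L2-approx} plus Lemma~\ref{lem:porosity-oscillation} with $h=\mu(t)\in H^1$. This yields an $L^2$-in-time bound, which is acceptable.
\item[(b)] \emph{Gradient term.} Use \eqref{eq:grad-Phi-flux} together with Lemma~\ref{lem:elliptic-consistency} for $w=\phi(t)$, uniformly in $t$ since $\phi\in L^\infty(0,T;H^2)$.
\item[(c)] \emph{Potential term.} This is the nonlinear term. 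Use
\[
|F'(\Phi^\varepsilon)-F'(\phi)|\le C(1+|\Phi^\varepsilon|^2+|\phi|^2)|\Phi^\varepsilon-\phi|
\]
and estimate it in $H^1(\Omega_p^\varepsilon)'$ by Hölder with exponents $(3,2,6)$: $\|1+|\Phi^\varepsilon|^2+|\phi|^2\|_{L^3}\|\Phi^\varepsilon-\phi\|_{L^2}\|\eta\|_{L^6}$. Here $\|\Phi^\varepsilon\|_{L^6(\Omega_p^\varepsilon)}$ is bounded by \eqref{eq:corrector-H1-bounds} together with the uniform Sobolev embedding on $\Omega_p^\varepsilon$ (via $P_\varepsilon$, $d\le3$). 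This gives $O(\varepsilon)$. Then apply Lemma~\ref{lem:porosity-oscillation} with $h=F'(\phi(t))$, which requires $F'(\phi)\in H^1(\Omega)$. That holds because $\nabla F'(\phi)=F''(\phi)\nabla\phi$, $|F''(s)|\le C(1+s^2)$, and $\phi\in H^2\subset L^\infty$ for $d\le3$.
\end{enumerate}

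The main obstacle I expect is bookkeeping rather than any single hard step. One issue is that Lemma~\ref{lem:porosity-oscillation} and Lemma~\ref{lem:elliptic-consistency} need $h$ and $w$ in $H^1$ and $H^2$ respectively with time-integrable norms. The other is verifying the nonlinear $F'$ term carefully, including the $H^1$-regularity of $F'(\phi)$ and the uniform $L^6$ bound on $\Phi^\varepsilon$, which needs the extension of $\chi_i(x/\varepsilon)\mathcal Q_\varepsilon(\cdot)$. If the $L^6$ control of $\Phi^\varepsilon$ proves delicate, I would fall back on $\phi\in L^\infty(0,T;L^\infty)$ and bound the correction part $\varepsilon\sum\mathcal Q_\varepsilon(\partial_i\phi)\chi_i$ via \eqref{eq:corrector-H1-bounds} and the uniform embedding. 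All contributions are $O(\varepsilon)$ except the elliptic-consistency terms, which are $O(\varepsilon^{1/2})$, and these terms fix the final rate $C\varepsilon^{1/2}$ in \eqref{eq:residual-bound}.
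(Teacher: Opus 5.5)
Your proposal is correct and follows essentially the paper's proof: the paper also peels off the $O(\varepsilon)$ corrector errors via \eqref{eq:corrector-L2-approx}, \eqref{eq:time-corrector-bound}, \eqref{eq:grad-Phi-flux}, \eqref{eq:grad-M-flux} and the $(3,2,6)$ H\"older bound for the $F'$ difference. It then compares with $\theta_p$ times the homogenized weak equations tested by $P_\varepsilon\zeta$ and $P_\varepsilon\eta$, using Lemma~\ref{lem:porosity-oscillation} (rate $\varepsilon$) and Lemma~\ref{lem:elliptic-consistency} (the $\varepsilon^{1/2}$ source); the only cosmetic difference is that the paper applies the porosity lemma once to the combined data $a_\mu=-\partial_t\phi-G(\phi)$ and $a_\phi=\mu-F'(\phi)$ instead of termwise, which is equivalent by linearity.
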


\begin{proof}
	We use the auxiliary fluxes \(\mathcal J_w^\varepsilon\) defined in
	\eqref{eq:aux-flux}. The proof is entirely variational.

	Define
	\[
	a_{\phiz}
	:=
	-\div(\B^{\mathrm{hom}}\nabla\phiz),
	\qquad
	a_{\muz}
	:=
	-\div(\B^{\mathrm{hom}}\nabla\muz).
	\]
	By the homogenized equations,
	\begin{equation}
		\label{eq:a-phi-a-mu}
		a_{\phiz}
		=
		\muz-F'(\phiz),
		\qquad
		a_{\muz}
		=
		-\partial_t\phiz-G(\phiz).
	\end{equation}
	Under \eqref{eq:quant-reg}, we have
	\[
	a_{\phiz}\in L^2(0,T;H^1(\Omega)),
	\qquad
	a_{\muz}\in L^2(0,T;H^1(\Omega)).
	\]
	Indeed, \(a_{\muz}\in L^2(0,T;H^1(\Omega))\) follows from $\partial_t\phiz\in L^2(0,T;H^1(\Omega))$
	and the Lipschitz continuity of \(G\). Also,
	\(a_{\phiz}\in L^2(0,T;H^1(\Omega))\) follows from $\muz\in L^2(0,T;H^1(\Omega))$
	and $F'(\phiz)\in L^\infty(0,T;H^1(\Omega))$.
	The latter is a consequence of
	\(\phiz\in L^\infty(0,T;H^2(\Omega))\), the embedding
	\(H^2(\Omega)\hookrightarrow L^\infty(\Omega)\) for \(d\leq3\), and
	the growth assumption on \(F''\).

	\textbf{Estimate of \(\mathcal R_2^\varepsilon\).}
	Let \(\eta\in H^1(\Omega_p^\varepsilon)\), and set
	\[
	H_\varepsilon:=P_\varepsilon\eta\in H^1(\Omega).
	\]
	From \eqref{eq:weak-residual2}, add and subtract
	\(\muz\), \(F'(\phiz)\), and \(\mathcal J_{\phiz}^\varepsilon\):
	\begin{multline*}
		\bigl\langle \mathcal R_2^\varepsilon(t),\eta\bigr\rangle
		=
		\int_{\Omega_p^\varepsilon}
		(M^\varepsilon-\muz)\eta\,dx
		+
		\int_{\Omega_p^\varepsilon}
		\bigl(F'(\phiz)-F'(\Phi^\varepsilon)\bigr)\eta\,dx
		-
		\int_{\Omega_p^\varepsilon}
		\bigl(\nabla\Phi^\varepsilon-\mathcal J_{\phiz}^\varepsilon\bigr)
		\cdot\nabla\eta\,dx
		\\
		+
		\int_{\Omega_p^\varepsilon}
		(\muz-F'(\phiz))\eta\,dx
		-
		\int_{\Omega_p^\varepsilon}
		\mathcal J_{\phiz}^\varepsilon\cdot\nabla\eta\,dx .
	\end{multline*}
	Using \eqref{eq:a-phi-a-mu}, the last line is
	\[
	\int_{\Omega_p^\varepsilon}
	a_{\phiz}\eta\,dx
	-
	\int_{\Omega_p^\varepsilon}
	\mathcal J_{\phiz}^\varepsilon\cdot\nabla\eta\,dx .
	\]
	We split this term as
	\begin{multline*}
		\int_{\Omega_p^\varepsilon}
		a_{\phiz}\eta\,dx
		-
		\int_{\Omega_p^\varepsilon}
		\mathcal J_{\phiz}^\varepsilon\cdot\nabla\eta\,dx
		=\\
		\left[
		\int_{\Omega_p^\varepsilon}
		a_{\phiz}\eta\,dx
		-
		\theta_p
		\int_\Omega
		a_{\phiz}H_\varepsilon\,dx
		\right]
		+
		\left[
		\theta_p
		\int_\Omega
		\B^{\mathrm{hom}}\nabla\phiz\cdot\nabla H_\varepsilon\,dx
		-
		\int_{\Omega_p^\varepsilon}
		\mathcal J_{\phiz}^\varepsilon\cdot\nabla\eta\,dx
		\right].
	\end{multline*}
	Here we used the homogenized elliptic relation $a_{\phiz}=-\div(\B^{\mathrm{hom}}\nabla\phiz)$
	together with the homogeneous conormal boundary condition $\B^{\mathrm{hom}}\nabla\phiz\cdot\bn=0
	\quad\text{on }\partial\Omega$,
	which gives
	\[
	\int_\Omega
	a_{\phiz}H_\varepsilon\,dx
	=
	\int_\Omega
	\B^{\mathrm{hom}}\nabla\phiz\cdot\nabla H_\varepsilon\,dx .
	\]
	By Lemma~\ref{lem:porosity-oscillation}, applied with
	\(h=a_{\phiz}(t)\), and by Lemma~\ref{lem:elliptic-consistency},
	applied with \(w=\phiz(t)\), we obtain, for a.e. \(t\in(0,T)\),
	\begin{multline}
		\left|
		\int_{\Omega_p^\varepsilon}
		(\muz-F'(\phiz))\eta\,dx
		-
		\int_{\Omega_p^\varepsilon}
		\mathcal J_{\phiz}^\varepsilon\cdot\nabla\eta\,dx
		\right|
		\nonumber
		\leq
		C\varepsilon
		\|a_{\phiz}(t)\|_{H^1(\Omega)}
		\|\eta\|_{H^1(\Omega_p^\varepsilon)}
		+
		C\varepsilon^{1/2}
		\|\phiz(t)\|_{H^2(\Omega)}
		\|\eta\|_{H^1(\Omega_p^\varepsilon)}
		\nonumber
		\\
		\leq
		C\varepsilon^{1/2}
		\left(
		\|\phiz(t)\|_{H^2(\Omega)}
		+
		\|a_{\phiz}(t)\|_{H^1(\Omega)}
		\right)
		\|\eta\|_{H^1(\Omega_p^\varepsilon)} .
		\label{eq:R2-main-bound-fixed}
	\end{multline}
	The remaining terms are of order \(\GO(\varepsilon)\) in
	\(H^1(\Omega_p^\varepsilon)'\). Indeed, by
	\eqref{eq:corrector-L2-approx},
	\[
	\|M^\varepsilon(t)-\muz(t)\|_{L^2(\Omega_p^\varepsilon)}
	\leq
	C\varepsilon\|\muz(t)\|_{H^1(\Omega)} ,
	\]
	and by Lemma~\ref{lem:corrector-approx},
	\[
	\|\nabla\Phi^\varepsilon(t)-\mathcal J_{\phiz}^\varepsilon(t)\|_
	{L^2(\Omega_p^\varepsilon)}
	\leq
	C\varepsilon\|\phiz(t)\|_{H^2(\Omega)} .
	\]
	For the nonlinear term, the growth condition on \(F''\), the embedding
	\(H^1(\Omega_p^\varepsilon)\hookrightarrow L^6(\Omega_p^\varepsilon)\)
	for \(d\leq3\), and the uniform \(H^1\)-bounds for
	\(\phiz\) and \(\Phi^\varepsilon\) give
	\[
	\|F'(\Phi^\varepsilon(t))-F'(\phiz(t))\|_
	{H^1(\Omega_p^\varepsilon)'}
	\leq
	C
	\|\Phi^\varepsilon(t)-\phiz(t)\|_{L^2(\Omega_p^\varepsilon)} .
	\]
	Using again \eqref{eq:corrector-L2-approx}, we get
	\[
	\|F'(\Phi^\varepsilon(t))-F'(\phiz(t))\|_
	{H^1(\Omega_p^\varepsilon)'}
	\leq
	C\varepsilon\|\phiz(t)\|_{H^1(\Omega)} .
	\]
	Consequently,
	\[
	\|\mathcal R_2^\varepsilon(t)\|_{H^1(\Omega_p^\varepsilon)'}
	\leq
	C\varepsilon^{1/2}
	\left(
	1
	+
	\|\phiz(t)\|_{H^2(\Omega)}
	+
	\|\muz(t)\|_{H^1(\Omega)}
	+
	\|a_{\phiz}(t)\|_{H^1(\Omega)}
	\right).
	\]
	Squaring and integrating over \((0,T)\), and using
	\eqref{eq:quant-reg} together with
	\(a_{\phiz}\in L^2(0,T;H^1(\Omega))\), yields the estimate for
	\(\mathcal R_2^\varepsilon\).

	\textbf{Estimate of \(\mathcal R_1^\varepsilon\).}
	Let \(\zeta\in H^1(\Omega_p^\varepsilon)\), and set
	\[
	Z_\varepsilon:=P_\varepsilon\zeta\in H^1(\Omega).
	\]
	From \eqref{eq:weak-residual1}, add and subtract
	\(\partial_t\phiz\), \(G(\phiz)\), and
	\(\mathcal J_{\muz}^\varepsilon\):
	\begin{multline*}
		\bigl\langle \mathcal R_1^\varepsilon(t),\zeta\bigr\rangle
		=
		\bigl\langle\partial_t(\Phi^\varepsilon-\phiz)(t),\zeta\bigr\rangle
		+
		\int_{\Omega_p^\varepsilon}
		\bigl(G(\Phi^\varepsilon(t))-G(\phiz(t))\bigr)\zeta\,dx
		\\
		+
		\int_{\Omega_p^\varepsilon}
		\bigl(\nabla M^\varepsilon(t)-\mathcal J_{\muz}^\varepsilon(t)\bigr)
		\cdot\nabla\zeta\,dx
		+
		\int_{\Omega_p^\varepsilon}
		\mathcal J_{\muz}^\varepsilon(t)\cdot\nabla\zeta\,dx
		+
		\int_{\Omega_p^\varepsilon}
		(\partial_t\phiz(t)+G(\phiz(t)))\zeta\,dx .
	\end{multline*}
	Using \eqref{eq:a-phi-a-mu}, we have
	\[
	\partial_t\phiz+G(\phiz)
	=
	-a_{\muz}.
	\]
	Hence the last line is
	\[
	\int_{\Omega_p^\varepsilon}
	\mathcal J_{\muz}^\varepsilon\cdot\nabla\zeta\,dx
	-
	\int_{\Omega_p^\varepsilon}
	a_{\muz}\zeta\,dx .
	\]
	We split this term as
	\begin{multline*}
		\int_{\Omega_p^\varepsilon}
		\mathcal J_{\muz}^\varepsilon\cdot\nabla\zeta\,dx
		-
		\int_{\Omega_p^\varepsilon}
		a_{\muz}\zeta\,dx
		=
		\left[
		\int_{\Omega_p^\varepsilon}
		\mathcal J_{\muz}^\varepsilon\cdot\nabla\zeta\,dx
		-
		\theta_p
		\int_\Omega
		\B^{\mathrm{hom}}\nabla\muz\cdot\nabla Z_\varepsilon\,dx
		\right]
		\\
		+
		\left[
		\theta_p
		\int_\Omega
		a_{\muz}Z_\varepsilon\,dx
		-
		\int_{\Omega_p^\varepsilon}
		a_{\muz}\zeta\,dx
		\right].
	\end{multline*}
	Here we used the homogenized elliptic relation $a_{\muz}=-\div(\B^{\mathrm{hom}}\nabla\muz)$
	together with the homogeneous conormal boundary condition $\B^{\mathrm{hom}}\nabla\muz\cdot\bn=0
	\quad\text{on }\partial\Omega$,
	which gives
	\[
	\int_\Omega
	a_{\muz}Z_\varepsilon\,dx
	=
	\int_\Omega
	\B^{\mathrm{hom}}\nabla\muz\cdot\nabla Z_\varepsilon\,dx .
	\]
	By Lemma~\ref{lem:elliptic-consistency}, applied with
	\(w=\muz(t)\), and by Lemma~\ref{lem:porosity-oscillation}, applied
	with \(h=a_{\muz}(t)\), we obtain, for a.e. \(t\in(0,T)\),
	\begin{multline}
		\left|
		\int_{\Omega_p^\varepsilon}
		\mathcal J_{\muz}^\varepsilon\cdot\nabla\zeta\,dx
		+
		\int_{\Omega_p^\varepsilon}
		(\partial_t\phiz+G(\phiz))\zeta\,dx
		\right|
		\nonumber
		\leq
		C\varepsilon^{1/2}
		\|\muz(t)\|_{H^2(\Omega)}
		\|\zeta\|_{H^1(\Omega_p^\varepsilon)}
		+
		C\varepsilon
		\|a_{\muz}(t)\|_{H^1(\Omega)}
		\|\zeta\|_{H^1(\Omega_p^\varepsilon)}
		\nonumber
		\\
		\leq
		C\varepsilon^{1/2}
		\left(
		\|\muz(t)\|_{H^2(\Omega)}
		+
		\|a_{\muz}(t)\|_{H^1(\Omega)}
		\right)
		\|\zeta\|_{H^1(\Omega_p^\varepsilon)} .
		\label{eq:R1-main-bound-fixed}
	\end{multline}
	The gradient remainder satisfies
	\[
	\|\nabla M^\varepsilon(t)-\mathcal J_{\muz}^\varepsilon(t)\|_
	{L^2(\Omega_p^\varepsilon)}
	\leq
	C\varepsilon\|\muz(t)\|_{H^2(\Omega)}
	\]
	by Lemma~\ref{lem:corrector-approx}. The source term is controlled by
	the Lipschitz continuity of \(G\) and \eqref{eq:corrector-L2-approx}:
	\[
	\|G(\Phi^\varepsilon(t))-G(\phiz(t))\|_{L^2(\Omega_p^\varepsilon)}
	\leq
	C\varepsilon\|\phiz(t)\|_{H^1(\Omega)} .
	\]
	Finally, \eqref{eq:time-corrector-bound} gives
	\[
	\|\partial_t(\Phi^\varepsilon-\phiz)(t)\|_
	{H^1(\Omega_p^\varepsilon)'}
	\leq
	C\varepsilon\|\partial_t\phiz(t)\|_{H^1(\Omega)} .
	\]
	Combining these estimates gives
	\[
	\|\mathcal R_1^\varepsilon(t)\|_{H^1(\Omega_p^\varepsilon)'}
	\leq
	C\varepsilon^{1/2}
	\left(
	1
	+
	\|\muz(t)\|_{H^2(\Omega)}
	+
	\|a_{\muz}(t)\|_{H^1(\Omega)}
	+
	\|\phiz(t)\|_{H^1(\Omega)}
	+
	\|\partial_t\phiz(t)\|_{H^1(\Omega)}
	\right).
	\]
	Squaring and integrating over \((0,T)\), and using
	\eqref{eq:quant-reg} together with
	\(a_{\muz}\in L^2(0,T;H^1(\Omega))\), yields the estimate for
	\(\mathcal R_1^\varepsilon\).
	
	Combining the estimates for \(\mathcal R_1^\varepsilon\) and
	\(\mathcal R_2^\varepsilon\) proves \eqref{eq:residual-bound}.
\end{proof}
\subsection{Quantitative convergence rates}
\label{subsec:rates}

We now derive the quantitative estimate in the variational energy scale
associated with the Cahn--Hilliard structure. Since the residuals
\(\mathcal R_1^\varepsilon\) and \(\mathcal R_2^\varepsilon\) are defined
only in dual spaces, we do not test the chemical-potential error equation
with \(\partial_t e_\phi^\varepsilon\). Instead, we use the standard
Neumann inverse and work in a negative norm for the phase variable.

Let
\[
m_\varepsilon(t)
:=
\frac{1}{|\Omega_p^\varepsilon|}
\int_{\Omega_p^\varepsilon}
e_\phi^\varepsilon(t,x)\,dx,
\qquad
\widetilde e_\phi^\varepsilon
:=
e_\phi^\varepsilon-m_\varepsilon .
\]
For each \(f\in H^1(\Omega_p^\varepsilon)'\) with zero mean, let
\(\mathcal N_\varepsilon f\in H^1(\Omega_p^\varepsilon)\) be the solution
of
\[
-\Delta \mathcal N_\varepsilon f=f
\quad\text{in }\Omega_p^\varepsilon,
\qquad
\nabla\mathcal N_\varepsilon f\cdot\bn_\varepsilon=0
\quad\text{on }\partial\Omega_p^\varepsilon,
\qquad
\int_{\Omega_p^\varepsilon}\mathcal N_\varepsilon f\,dx=0.
\]
We define
\[
\|f\|_{H^{-1}_\varepsilon}^2
:=
\int_{\Omega_p^\varepsilon}
|\nabla\mathcal N_\varepsilon f|^2\,dx .
\]
By the uniform Poincaré--Wirtinger inequality on $\Omega^\varepsilon_p$
(Remark~\ref{rem:frame}) and standard elliptic regularity for the Neumann
Laplacian on Lipschitz domains, the norm $\|\cdot\|_{H^{-1}_\varepsilon}$ is
uniformly equivalent (in $\varepsilon$) to the standard $H^1(\Omega^\varepsilon_p)'$-norm
on zero-mean distributions; see for instance~\cite{damlamian2002sequences}.

\begin{theorem}[Variational corrector estimate]
	\label{thm:corrector-estimate}
	Let Assumptions~\ref{ass:F}, \ref{ass:G}, \ref{ass:init},
	and~\ref{ass:quant-reg} hold. Then there
	exists \(C>0\), independent of \(\varepsilon\), such that
	\begin{equation}
		\label{eq:stability-rate}
		\|\widetilde e_\phi^\varepsilon\|_{L^\infty(0,T;H^{-1}_\varepsilon)}
		+
		\|e_\phi^\varepsilon\|_{L^2(0,T;H^1(\Omega_p^\varepsilon))}
		+
		\|m_\varepsilon\|_{L^\infty(0,T)}
		\leq
		C\Bigl(
		\|\widetilde e_\phi^\varepsilon(0)\|_{H^{-1}_\varepsilon}
		+
		|m_\varepsilon(0)|
		+
		\varepsilon^{1/2}
		\Bigr).
	\end{equation}
	If, in addition, we have well prepared initial data
	\begin{equation}\label{eq:RIC}
		\|\widetilde e_\phi^\varepsilon(0)\|_{H^{-1}_\varepsilon}
		+
		|m_\varepsilon(0)|
		\leq
		C\varepsilon^{1/2},
	\end{equation}
	then
	\[
	\|\widetilde e_\phi^\varepsilon\|_{L^\infty(0,T;H^{-1}_\varepsilon)}
	+
	\|e_\phi^\varepsilon\|_{L^2(0,T;H^1(\Omega_p^\varepsilon))}
	+
	\|m_\varepsilon\|_{L^\infty(0,T)}
	\leq
	C\varepsilon^{1/2}.
	\]
	The constant \(C\) depends on the data, \(T\), the cell geometry, and the
	regularity norms in \eqref{eq:quant-reg}, but not on
	\(\varepsilon\).
\end{theorem}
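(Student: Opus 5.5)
The plan is the negative-norm energy method announced in the introduction, applied to the error equations \eqref{eq:error-phase-weak}--\eqref{eq:error-chem-weak}, with the residuals controlled by Lemma~\ref{lem:residual}. First I would split off the mean. Testing \eqref{eq:error-phase-weak} with $\zeta=1$ gives an ODE,
\[
|\Omega_p^\varepsilon|\,m_\varepsilon' + \int_{\Omega_p^\varepsilon}\bigl(G(\phi_\varepsilon)-G(\Phi^\varepsilon)\bigr)\,dx = -\langle\mathcal R_1^\varepsilon,1\rangle .
\]
Since $G$ is Lipschitz and $|\langle\mathcal R_1^\varepsilon,1\rangle|\le C\|\mathcal R_1^\varepsilon\|_{H^1(\Omega_p^\varepsilon)'}$, this yields
\[
|m_\varepsilon(t)|\le |m_\varepsilon(0)| + C\int_0^t\Bigl(\|e_\phi^\varepsilon\|_{L^2}+\|\mathcal R_1^\varepsilon\|_{(H^1)'}\Bigr)\,ds .
\]
This bound will be fed into the Gronwall argument. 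Since $\partial_t m_\varepsilon$ is a scalar, $\partial_t\widetilde e^\varepsilon_\phi$ has zero mean, so $\mathcal N_\varepsilon\widetilde e_\phi^\varepsilon$ is well defined.

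Next, I would test \eqref{eq:error-phase-weak} with $\zeta=\mathcal N_\varepsilon\widetilde e_\phi^\varepsilon$, rigorously on Galerkin approximations or via a Lions--Magenes time-derivative identity. This gives
\[
\tfrac12\tfrac{d}{dt}\|\widetilde e_\phi^\varepsilon\|^2_{H^{-1}_\varepsilon} + \int \nabla e_\mu^\varepsilon\cdot\nabla\mathcal N_\varepsilon\widetilde e_\phi^\varepsilon\,dx = \dots
\]
By the definition of $\mathcal N_\varepsilon$, the second term equals $\int e_\mu^\varepsilon\,\widetilde e_\phi^\varepsilon\,dx$. I then substitute \eqref{eq:error-chem-weak} with $\eta=\widetilde e_\phi^\varepsilon$, which produces
\[
\|\nabla e_\phi^\varepsilon\|^2 + \int\bigl(F'(\phi_\varepsilon)-F'(\Phi^\varepsilon)\bigr)\,\widetilde e_\phi^\varepsilon - \langle \mathcal R_2^\varepsilon,\widetilde e_\phi^\varepsilon\rangle .
\]
The remaining terms are handled as follows.
\begin{itemize}
\item The $G$-term $\int(G(\phi_\varepsilon)-G(\Phi^\varepsilon))\mathcal N_\varepsilon\widetilde e_\phi^\varepsilon$ is bounded by $C\|e_\phi^\varepsilon\|_{L^2}\|\widetilde e_\phi^\varepsilon\|_{H^{-1}_\varepsilon}$, using uniform Poincar\'e--Wirtinger on $\mathcal N_\varepsilon\widetilde e^\varepsilon_\phi$.
\item The residual terms satisfy $|\langle\mathcal R_1^\varepsilon,\mathcal N_\varepsilon\widetilde e_\phi^\varepsilon\rangle|\le C\|\mathcal R_1^\varepsilon\|_{(H^1)'}\|\widetilde e_\phi^\varepsilon\|_{H^{-1}_\varepsilon}$ and $|\langle\mathcal R_2^\varepsilon,\widetilde e_\phi^\varepsilon\rangle|\le \delta\|\nabla e_\phi^\varepsilon\|^2+C_\delta\|\mathcal R_2^\varepsilon\|^2_{(H^1)'}$.
\item Every $L^2$ norm of $\widetilde e_\phi^\varepsilon$ is controlled by the interpolation $\|\widetilde e\|_{L^2}^2\le\|\nabla\widetilde e\|\,\|\widetilde e\|_{H^{-1}_\varepsilon}$, which follows from $\|\widetilde e\|^2=\int\nabla\widetilde e\cdot\nabla\mathcal N_\varepsilon\widetilde e$. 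Then $\|e^\varepsilon_\phi\|_{L^2}\le\|\widetilde e\|_{L^2}+C|m_\varepsilon|$, and Young's inequality absorbs the gradient.
\end{itemize}

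The main obstacle is the nonlinear term $\int(F'(\phi_\varepsilon)-F'(\Phi^\varepsilon))\widetilde e_\phi^\varepsilon$, since $F$ is not convex. I would write $\widetilde e=e-m_\varepsilon$ and use the mean value form $F'(a)-F'(b)=F''(\xi)(a-b)$ with $|F''(\xi)|\le C(1+|a|^2+|b|^2)$. Both $\phi_\varepsilon$ and $\Phi^\varepsilon$ are bounded in $L^\infty(0,T;H^1)\subset L^\infty(0,T;L^6)$ uniformly in $\varepsilon$, by Lemma~\ref{lem:energy} and \eqref{eq:corrector-H1-bounds}. Hence the weight lies in $L^3$, and H\"older gives the bound $C\|e^\varepsilon_\phi\|_{L^3}\|e^\varepsilon_\phi\|_{L^2}+C|m_\varepsilon|\|e_\phi^\varepsilon\|_{L^{6/5}}$. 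Gagliardo--Nirenberg for $d\le3$ gives $\|e\|_{L^3}\le C\|e\|_{L^2}^{1/2}\|e\|_{H^1}^{1/2}$. Combined with the $H^{-1}$ interpolation and Young's inequality, this term is bounded by $\tfrac14\|\nabla e_\phi^\varepsilon\|^2+C(\|\widetilde e_\phi^\varepsilon\|^2_{H^{-1}_\varepsilon}+m_\varepsilon^2)$. The exponents have to be checked carefully; if the direct route fails, I would first try the standard Cahn--Hilliard trick of bounding $F''\ge -C$ below on the $\widetilde e$ part. That only works if $F''$ is bounded below, which Assumption~\ref{ass:F} does not give, so the H\"older/Gagliardo--Nirenberg route is the primary one.

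Finally, I set $y(t)=\|\widetilde e_\phi^\varepsilon\|^2_{H^{-1}_\varepsilon}+m_\varepsilon^2$ and add the mean ODE, multiplied by $m_\varepsilon$, to the differential inequality. This gives
\[
y'+\|\nabla e_\phi^\varepsilon\|^2\le C y + C\bigl(\|\mathcal R_1^\varepsilon\|^2_{(H^1)'}+\|\mathcal R_2^\varepsilon\|^2_{(H^1)'}\bigr).
\]
Gronwall's lemma together with Lemma~\ref{lem:residual} yields
\[
\sup_t y+\int_0^T\|\nabla e_\phi^\varepsilon\|^2\le C\bigl(y(0)+\varepsilon\bigr).
\]
The full $L^2(0,T;H^1)$ norm of $e_\phi^\varepsilon$ then follows from Poincar\'e--Wirtinger applied to $\widetilde e$ plus the bound on $m_\varepsilon$. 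Taking square roots gives \eqref{eq:stability-rate}, and under \eqref{eq:RIC} the bound becomes $C\varepsilon^{1/2}$.
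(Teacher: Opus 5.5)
Your proposal is correct and follows the paper's proof essentially step for step. In both, the phase error equation is tested with $\mathcal N_\varepsilon\widetilde e_\phi^\varepsilon$ and the chemical-potential error equation with $\widetilde e_\phi^\varepsilon$; the interpolation $\|\widetilde e_\phi^\varepsilon\|_{L^2}^2\le\|\widetilde e_\phi^\varepsilon\|_{H^{-1}_\varepsilon}\|\nabla e_\phi^\varepsilon\|_{L^2}$ is used; the mean ODE is multiplied by $m_\varepsilon$; and the argument closes with Gronwall and Lemma~\ref{lem:residual}.

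The only slip is the H\"older bookkeeping in the $F'$-term. With the weight $1+|\phi_\varepsilon|^2+|\Phi^\varepsilon|^2$ only in $L^3$, you get $\|e_\phi^\varepsilon\|_{L^3}^2$ and $|m_\varepsilon|\,\|e_\phi^\varepsilon\|_{L^{3/2}}$, not $\|e_\phi^\varepsilon\|_{L^3}\|e_\phi^\varepsilon\|_{L^2}$ and $L^{6/5}$. After Gagliardo--Nirenberg this becomes $\|e_\phi^\varepsilon\|_{L^2}\|e_\phi^\varepsilon\|_{H^1}$, which is the paper's $(3,2,6)$-H\"older quantity up to a harmless $|m_\varepsilon|$ contribution, and it closes by Young exactly as you describe.
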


\begin{proof}
	The proof is based on the weak error identities
	\eqref{eq:error-phase-weak}--\eqref{eq:error-chem-weak} and the
	variational residual estimate of Lemma~\ref{lem:residual}. We suppress
	the time variable in the notation.
	
	\textbf{Step 1: Negative-norm testing of the phase equation.}
	Take \(\zeta=\mathcal N_\varepsilon\widetilde e_\phi^\varepsilon\)
	in \eqref{eq:error-phase-weak}. Since
	\(\mathcal N_\varepsilon\widetilde e_\phi^\varepsilon\) has zero mean,
	the mean part of \(\partial_t e_\phi^\varepsilon\) does not contribute.
	To justify this identity, note that $\mathcal{N}_\varepsilon$ is a bounded
	self-adjoint operator on the zero-mean subspace of $L^2(\Omega_p^\varepsilon)$,
	and $t\mapsto\widetilde{e}_\phi^\varepsilon(t)$ belongs to
	$L^2(0,T;L^2(\Omega_p^\varepsilon))\cap H^1(0,T;H^1(\Omega_p^\varepsilon)')$
	with zero mean. For any $\tau\in(0,T)$, testing the distributional time
	derivative against $\mathcal{N}_\varepsilon\widetilde{e}_\phi^\varepsilon$
	and using the symmetry of $\mathcal{N}_\varepsilon$ gives
	\[
	\int_0^\tau
	\left\langle
	\partial_t\widetilde{e}_\phi^\varepsilon,
	\mathcal{N}_\varepsilon\widetilde{e}_\phi^\varepsilon
	\right\rangle\,dt
	=
	\frac{1}{2}\|\widetilde{e}_\phi^\varepsilon(\tau)\|^2_{H^{-1}_\varepsilon}
	-
	\frac{1}{2}\|\widetilde{e}_\phi^\varepsilon(0)\|^2_{H^{-1}_\varepsilon},
	\]
	which is the integrated form of the identity below; see
	e.g.~\cite[Chapter~III, Lemma~1.2]{Temam1997}.
	Since the mean part of $\partial_t e_\phi^\varepsilon$ pairs to zero against
	$\mathcal{N}_\varepsilon\widetilde{e}_\phi^\varepsilon$ (which has zero mean),
	we may replace $\partial_t e_\phi^\varepsilon$ by
	$\partial_t\widetilde{e}_\phi^\varepsilon$ on the left, giving 
	\[
	\left\langle
	\partial_t e_\phi^\varepsilon,
	\mathcal N_\varepsilon\widetilde e_\phi^\varepsilon
	\right\rangle
	=
	\frac12
	\frac{d}{dt}
	\|\widetilde e_\phi^\varepsilon\|_{H^{-1}_\varepsilon}^2 .
	\]
	Moreover,
	\[
	\int_{\Omega_p^\varepsilon}
	\nabla e_\mu^\varepsilon
	\cdot
	\nabla\mathcal N_\varepsilon\widetilde e_\phi^\varepsilon\,dx
	=
	\int_{\Omega_p^\varepsilon}
	e_\mu^\varepsilon\widetilde e_\phi^\varepsilon\,dx .
	\]
	Hence
	\begin{align}
		\label{eq:phase-negative-test}
		\frac12\frac{d}{dt}
		\|\widetilde e_\phi^\varepsilon\|_{H^{-1}_\varepsilon}^2
		+
		\int_{\Omega_p^\varepsilon}
		e_\mu^\varepsilon\widetilde e_\phi^\varepsilon\,dx
		=
		-\int_{\Omega_p^\varepsilon}
		\bigl(G(\phi_\varepsilon)-G(\Phi^\varepsilon)\bigr)
		\mathcal N_\varepsilon\widetilde e_\phi^\varepsilon\,dx
		-
		\left\langle
		\mathcal R_1^\varepsilon,
		\mathcal N_\varepsilon\widetilde e_\phi^\varepsilon
		\right\rangle .
	\end{align}
	
	\textbf{Step 2: Testing the chemical-potential equation by the phase error.}
	Take \(\eta=\widetilde e_\phi^\varepsilon\)
	in \eqref{eq:error-chem-weak}. Since
	\(\nabla\widetilde e_\phi^\varepsilon=\nabla e_\phi^\varepsilon\), we get
	\begin{align}
		\label{eq:chem-phase-test}
		\int_{\Omega_p^\varepsilon}
		e_\mu^\varepsilon\widetilde e_\phi^\varepsilon\,dx
		=
		\|\nabla e_\phi^\varepsilon\|_{L^2(\Omega_p^\varepsilon)}^2
		+
		\int_{\Omega_p^\varepsilon}
		\bigl(F'(\phi_\varepsilon)-F'(\Phi^\varepsilon)\bigr)
		\widetilde e_\phi^\varepsilon\,dx
		-
		\left\langle
		\mathcal R_2^\varepsilon,
		\widetilde e_\phi^\varepsilon
		\right\rangle .
	\end{align}
	Combining \eqref{eq:phase-negative-test} and
	\eqref{eq:chem-phase-test} gives
	\begin{multline}
		\label{eq:basic-negative-energy}
		\frac12\frac{d}{dt}
		\|\widetilde e_\phi^\varepsilon\|_{H^{-1}_\varepsilon}^2
		+
		\|\nabla e_\phi^\varepsilon\|_{L^2(\Omega_p^\varepsilon)}^2
		=
		-\int_{\Omega_p^\varepsilon}
		\bigl(F'(\phi_\varepsilon)-F'(\Phi^\varepsilon)\bigr)
		\widetilde e_\phi^\varepsilon\,dx
		-\int_{\Omega_p^\varepsilon}
		\bigl(G(\phi_\varepsilon)-G(\Phi^\varepsilon)\bigr)
		\mathcal N_\varepsilon\widetilde e_\phi^\varepsilon\,dx\\
		+
		\left\langle
		\mathcal R_2^\varepsilon,
		\widetilde e_\phi^\varepsilon
		\right\rangle
		-
		\left\langle
		\mathcal R_1^\varepsilon,
		\mathcal N_\varepsilon\widetilde e_\phi^\varepsilon
		\right\rangle .
	\end{multline}
	
\textbf{Step 3: Estimates of the nonlinear terms.}
The Sobolev embedding $H^1(\Omega_p^\varepsilon)\hookrightarrow L^6(\Omega_p^\varepsilon)$
holds for $d\leq 3$ with constant uniform in $\varepsilon$, by
Remark~\ref{rem:frame}. The uniform $H^1$-bounds
$\|\phi_\varepsilon\|_{L^\infty(0,T;H^1(\Omega_p^\varepsilon))}\leq C$ and
$\|\Phi^\varepsilon\|_{L^\infty(0,T;H^1(\Omega_p^\varepsilon))}\leq C$
follow from Lemma~\ref{lem:energy} and \eqref{eq:corrector-H1-bounds}
respectively.

The interpolation inequality for zero-mean functions on $\Omega_p^\varepsilon$
states: for $f\in H^1(\Omega_p^\varepsilon)$ with $\int_{\Omega_p^\varepsilon}f\,dx=0$,
\[
\|f\|_{L^2(\Omega_p^\varepsilon)}
\leq
C\|f\|_{H^{-1}_\varepsilon}^{1/2}
\|\nabla f\|_{L^2(\Omega_p^\varepsilon)}^{1/2},
\]
with $C>0$ independent of $\varepsilon$. This follows from the uniform
Poincar\'e--Wirtinger inequality on $\Omega_p^\varepsilon$ (Remark~\ref{rem:frame})
and the definition of $\|\cdot\|_{H^{-1}_\varepsilon}$: indeed,
\[
\|f\|_{L^2}^2
=
\langle f, f\rangle
=
\langle -\Delta\mathcal{N}_\varepsilon f, f\rangle
=
\|\nabla\mathcal{N}_\varepsilon f\|_{L^2}
\|\nabla f\|_{L^2}
=
\|f\|_{H^{-1}_\varepsilon}
\|\nabla f\|_{L^2},
\]
where we used integration by parts and the definition of $\mathcal{N}_\varepsilon$.

Since $e_\phi^\varepsilon=\widetilde{e}_\phi^\varepsilon+m_\varepsilon$,
\[
\|e_\phi^\varepsilon\|_{L^2(\Omega_p^\varepsilon)}
\leq
\|\widetilde{e}_\phi^\varepsilon\|_{L^2(\Omega_p^\varepsilon)}
+
C|m_\varepsilon|
\leq
C\|\widetilde{e}_\phi^\varepsilon\|_{H^{-1}_\varepsilon}^{1/2}
\|\nabla e_\phi^\varepsilon\|_{L^2(\Omega_p^\varepsilon)}^{1/2}
+C|m_\varepsilon|.
\]

\textbf{Estimate of the $F'$ term.}
Using $|F'(a)-F'(b)|\leq C(1+|a|^2+|b|^2)|a-b|$, H\"older's inequality
with exponents $(3,2,6)$ in $d=3$,
\[
\left|
\int_{\Omega_p^\varepsilon}
\bigl(F'(\phi_\varepsilon)-F'(\Phi^\varepsilon)\bigr)
\widetilde{e}_\phi^\varepsilon\,dx
\right|
\leq
C\|1+|\phi_\varepsilon|^2+|\Phi^\varepsilon|^2\|_{L^3(\Omega_p^\varepsilon)}
\|e_\phi^\varepsilon\|_{L^2(\Omega_p^\varepsilon)}
\|\widetilde{e}_\phi^\varepsilon\|_{L^6(\Omega_p^\varepsilon)}.
\]
By the uniform $H^1$-bounds and the embedding $H^1\hookrightarrow L^6$,
$\|1+|\phi_\varepsilon|^2+|\Phi^\varepsilon|^2\|_{L^3(\Omega_p^\varepsilon)}\leq C$
and $\|\widetilde{e}_\phi^\varepsilon\|_{L^6(\Omega_p^\varepsilon)}\leq
C\|\nabla e_\phi^\varepsilon\|_{L^2(\Omega_p^\varepsilon)}$,
both with constants uniform in $\varepsilon$. Substituting the bound on
$\|e_\phi^\varepsilon\|_{L^2}$ and applying Young's inequality
$ab\leq\delta a^2+C_\delta b^2$ twice, we obtain for every $\delta>0$,
\begin{equation}
	\label{eq:F-term-est}
	\left|
	\int_{\Omega_p^\varepsilon}
	\bigl(F'(\phi_\varepsilon)-F'(\Phi^\varepsilon)\bigr)
	\widetilde{e}_\phi^\varepsilon\,dx
	\right|
	\leq
	\delta
	\|\nabla e_\phi^\varepsilon\|_{L^2(\Omega_p^\varepsilon)}^2
	+
	C_\delta
	\|\widetilde{e}_\phi^\varepsilon\|_{H^{-1}_\varepsilon}^2
	+
	C_\delta|m_\varepsilon|^2.
\end{equation}

\textbf{Estimate of the $G$ term.}
Since $G$ is globally Lipschitz with constant $C_G$,
\[
\left|
\int_{\Omega_p^\varepsilon}
\bigl(G(\phi_\varepsilon)-G(\Phi^\varepsilon)\bigr)
\mathcal{N}_\varepsilon\widetilde{e}_\phi^\varepsilon\,dx
\right|
\leq
C_G
\|e_\phi^\varepsilon\|_{L^2(\Omega_p^\varepsilon)}
\|\mathcal{N}_\varepsilon\widetilde{e}_\phi^\varepsilon\|_{L^2(\Omega_p^\varepsilon)}.
\]
By the definition of $\mathcal{N}_\varepsilon$ and the uniform
Poincar\'e--Wirtinger inequality,
$\|\mathcal{N}_\varepsilon\widetilde{e}_\phi^\varepsilon\|_{L^2(\Omega_p^\varepsilon)}
\leq C\|\widetilde{e}_\phi^\varepsilon\|_{H^{-1}_\varepsilon}$
with $C$ independent of $\varepsilon$. Substituting the bound on
$\|e_\phi^\varepsilon\|_{L^2}$ and applying Young's inequality,
we obtain for every $\delta>0$,
\begin{equation}
	\label{eq:G-term-est}
	\left|
	\int_{\Omega_p^\varepsilon}
	\bigl(G(\phi_\varepsilon)-G(\Phi^\varepsilon)\bigr)
	\mathcal{N}_\varepsilon\widetilde{e}_\phi^\varepsilon\,dx
	\right|
	\leq
	\delta
	\|\nabla e_\phi^\varepsilon\|_{L^2(\Omega_p^\varepsilon)}^2
	+
	C_\delta
	\|\widetilde{e}_\phi^\varepsilon\|_{H^{-1}_\varepsilon}^2
	+
	C_\delta|m_\varepsilon|^2.
\end{equation}
	
	\textbf{Step 4: Estimates of the residual terms.}
	By the definition of \(\mathcal N_\varepsilon\),
	\[
	\|\mathcal N_\varepsilon\widetilde e_\phi^\varepsilon\|_{H^1(\Omega_p^\varepsilon)}
	\leq
	C
	\|\widetilde e_\phi^\varepsilon\|_{H^{-1}_\varepsilon}.
	\]
	Therefore
	\begin{equation}
		\label{eq:R1-term-est}
		\left|
		\left\langle
		\mathcal R_1^\varepsilon,
		\mathcal N_\varepsilon\widetilde e_\phi^\varepsilon
		\right\rangle
		\right|
		\leq
		C
		\|\mathcal R_1^\varepsilon\|_{H^1(\Omega_p^\varepsilon)'}^2
		+
		C
		\|\widetilde e_\phi^\varepsilon\|_{H^{-1}_\varepsilon}^2 .
	\end{equation}
	Moreover, since \(\widetilde e_\phi^\varepsilon\) has zero mean, the
	uniform Poincaré--Wirtinger inequality gives
	\[
	\|\widetilde e_\phi^\varepsilon\|_{H^1(\Omega_p^\varepsilon)}
	\leq
	C\|\nabla e_\phi^\varepsilon\|_{L^2(\Omega_p^\varepsilon)}.
	\]
	Hence
	\begin{equation}
		\label{eq:R2-term-est}
		\left|
		\left\langle
		\mathcal R_2^\varepsilon,
		\widetilde e_\phi^\varepsilon
		\right\rangle
		\right|
		\leq
		\delta
		\|\nabla e_\phi^\varepsilon\|_{L^2(\Omega_p^\varepsilon)}^2
		+
		C_\delta
		\|\mathcal R_2^\varepsilon\|_{H^1(\Omega_p^\varepsilon)'}^2 .
	\end{equation}
	
	Choosing \(\delta>0\) sufficiently small and combining
	\eqref{eq:basic-negative-energy}--\eqref{eq:R2-term-est}, we obtain
	\begin{equation}
		\label{eq:energy-diff-ineq}
		\frac{d}{dt}
		\|\widetilde e_\phi^\varepsilon\|_{H^{-1}_\varepsilon}^2
		+
		c
		\|\nabla e_\phi^\varepsilon\|_{L^2(\Omega_p^\varepsilon)}^2
		\leq
		C
		\|\widetilde e_\phi^\varepsilon\|_{H^{-1}_\varepsilon}^2
		+
		C|m_\varepsilon|^2
		+
		C\|\mathcal R_1^\varepsilon\|_{H^1(\Omega_p^\varepsilon)'}^2
		+
		C\|\mathcal R_2^\varepsilon\|_{H^1(\Omega_p^\varepsilon)'}^2 .
	\end{equation}
	
	\textbf{Step 5: Control of the mean.}
	Taking \(\zeta=1\) in \eqref{eq:error-phase-weak} gives
	\[
	\frac{d}{dt}m_\varepsilon(t)
	+
	\frac{1}{|\Omega_p^\varepsilon|}
	\int_{\Omega_p^\varepsilon}
	\bigl(G(\phi_\varepsilon)-G(\Phi^\varepsilon)\bigr)\,dx
	=
	-
	\frac{1}{|\Omega_p^\varepsilon|}
	\left\langle
	\mathcal R_1^\varepsilon,1
	\right\rangle .
	\]
	Multiplying by \(m_\varepsilon(t)\), using the Lipschitz continuity of
	\(G\), the uniform lower bound on \(|\Omega_p^\varepsilon|\), and
	Young's inequality, we obtain
	\begin{align*}
		\frac12\frac{d}{dt}|m_\varepsilon(t)|^2
		&\leq
		C|m_\varepsilon(t)|^2
		+
		C|m_\varepsilon(t)|
		\|\widetilde e_\phi^\varepsilon(t)\|_{L^2(\Omega_p^\varepsilon)}
		+
		C|m_\varepsilon(t)|
		\|\mathcal R_1^\varepsilon(t)\|_{H^1(\Omega_p^\varepsilon)'} .
	\end{align*}
	Using again the interpolation inequality for
	\(\widetilde e_\phi^\varepsilon\), we infer that, for every
	\(\delta>0\),
	\begin{equation}
		\label{eq:mean-diff-ineq}
		\frac{d}{dt}|m_\varepsilon(t)|^2
		\leq
		C|m_\varepsilon(t)|^2
		+
		\delta
		\|\nabla e_\phi^\varepsilon(t)\|_{L^2(\Omega_p^\varepsilon)}^2
		+
		C_\delta
		\|\widetilde e_\phi^\varepsilon(t)\|_{H^{-1}_\varepsilon}^2
		+
		C
		\|\mathcal R_1^\varepsilon(t)\|_{H^1(\Omega_p^\varepsilon)'}^2 .
	\end{equation}
	
	\textbf{Step 6: Gronwall argument.}
	Adding \eqref{eq:energy-diff-ineq} and \eqref{eq:mean-diff-ineq}, and
	choosing \(\delta>0\) sufficiently small, yields
	\[
	\frac{d}{dt}
	\left(
	\|\widetilde e_\phi^\varepsilon\|_{H^{-1}_\varepsilon}^2
	+
	|m_\varepsilon|^2
	\right)
	+
	c
	\|\nabla e_\phi^\varepsilon\|_{L^2(\Omega_p^\varepsilon)}^2
	\leq
	C
	\left(
	\|\widetilde e_\phi^\varepsilon\|_{H^{-1}_\varepsilon}^2
	+
	|m_\varepsilon|^2
	\right)
	+
	C
	\|\mathcal R_1^\varepsilon\|_{H^1(\Omega_p^\varepsilon)'}^2
	+
	C
	\|\mathcal R_2^\varepsilon\|_{H^1(\Omega_p^\varepsilon)'}^2 .
	\]
	Integrating over \((0,t)\) and applying Gronwall's inequality gives
	\begin{align*}
		&\|\widetilde e_\phi^\varepsilon\|_{L^\infty(0,T;H^{-1}_\varepsilon)}^2
		+
		\|m_\varepsilon\|_{L^\infty(0,T)}^2
		+
		\|\nabla e_\phi^\varepsilon\|_{L^2(0,T;L^2(\Omega_p^\varepsilon))}^2
		\\
		&\quad\leq
		C\Bigl(
		\|\widetilde e_\phi^\varepsilon(0)\|_{H^{-1}_\varepsilon}^2
		+
		|m_\varepsilon(0)|^2
		+
		\|\mathcal R_1^\varepsilon\|_{L^2(0,T;H^1(\Omega_p^\varepsilon)')}^2
		+
		\|\mathcal R_2^\varepsilon\|_{L^2(0,T;H^1(\Omega_p^\varepsilon)')}^2
		\Bigr).
	\end{align*}
	Finally, since
	\(e_\phi^\varepsilon=\widetilde e_\phi^\varepsilon+m_\varepsilon\), the
	uniform Poincaré--Wirtinger inequality gives
	\[
	\|e_\phi^\varepsilon(t)\|_{L^2(\Omega_p^\varepsilon)}
	\leq
	C\|\nabla e_\phi^\varepsilon(t)\|_{L^2(\Omega_p^\varepsilon)}
	+
	C|m_\varepsilon(t)|.
	\]
	Thus the previous estimate also controls
	\(\|e_\phi^\varepsilon\|_{L^2(0,T;H^1(\Omega_p^\varepsilon))}\).
	Taking square roots and using \eqref{eq:residual-bound} proves
	\eqref{eq:stability-rate}. The final \(\GO(\varepsilon^{1/2})\)
	statement follows immediately from \eqref{eq:RIC}.
\end{proof}


\begin{corollary}[Uncorrected convergence rate]
	\label{cor:uncorrected}
	Under the assumptions of Theorem~\ref{thm:corrector-estimate}, one has
	\begin{equation}
		\label{eq:uncorrected-bound-general}
		\|\phi_\varepsilon-\phiz\|_{L^2(0,T;L^2(\Omega_p^\varepsilon))}
		+
		\|\mu_\varepsilon-\muz\|_{L^2(0,T;H^1(\Omega_p^\varepsilon)')}
		\leq
		C\Bigl(
		\|\widetilde e_\phi^\varepsilon(0)\|_{H^{-1}_\varepsilon}
		+
		|m_\varepsilon(0)|
		+
		\varepsilon^{1/2}
		\Bigr).
	\end{equation}
	In particular, if \eqref{eq:RIC} holds, then
	\begin{equation}
		\label{eq:uncorrected-bound-prepared}
		\|\phi_\varepsilon-\phiz\|_{L^2(0,T;L^2(\Omega_p^\varepsilon))}
		+
		\|\mu_\varepsilon-\muz\|_{L^2(0,T;H^1(\Omega_p^\varepsilon)')}
		\leq
		C\varepsilon^{1/2}.
	\end{equation}
\end{corollary}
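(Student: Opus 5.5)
The plan is to split both errors into a corrector-vs-homogenized part and a microscopic-vs-corrector part, and to reduce everything to Theorem~\ref{thm:corrector-estimate}, the corrector bounds \eqref{eq:corrector-L2-approx}, and the residual estimate of Lemma~\ref{lem:residual}. Write
\[
\phi_\varepsilon-\phiz=e_\phi^\varepsilon+(\Phi^\varepsilon-\phiz),\qquad
\mu_\varepsilon-\muz=e_\mu^\varepsilon+(M^\varepsilon-\muz).
\]

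For the phase term we use that $\|e_\phi^\varepsilon\|_{L^2(0,T;L^2(\Omega_p^\varepsilon))}\le \|e_\phi^\varepsilon\|_{L^2(0,T;H^1(\Omega_p^\varepsilon))}$, which is controlled by the right-hand side of \eqref{eq:stability-rate}. By \eqref{eq:corrector-L2-approx}, $\|\Phi^\varepsilon-\phiz\|_{L^2(0,T;L^2)}\le C\varepsilon T^{1/2}\le C\varepsilon^{1/2}$. Likewise, $\|M^\varepsilon-\muz\|_{L^2(0,T;H^1(\Omega_p^\varepsilon)')}\le \|M^\varepsilon-\muz\|_{L^2(0,T;L^2)}\le C\varepsilon$, since the $L^2$ norm dominates the dual norm.

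The only genuinely new ingredient is a bound on $e_\mu^\varepsilon$ in $L^2(0,T;H^1(\Omega_p^\varepsilon)')$. We read it off the chemical-potential error identity \eqref{eq:error-chem-weak}: for $\eta\in H^1(\Omega_p^\varepsilon)$,
\[
\Bigl|\int_{\Omega_p^\varepsilon}e_\mu^\varepsilon\eta\Bigr|\le \|\nabla e_\phi^\varepsilon\|_{L^2}\|\eta\|_{H^1}+\Bigl|\int(F'(\phi_\varepsilon)-F'(\Phi^\varepsilon))\eta\Bigr|+\|\mathcal R_2^\varepsilon\|_{(H^1)'}\|\eta\|_{H^1}.
\]
For the nonlinear term we use $|F'(a)-F'(b)|\le C(1+|a|^2+|b|^2)|a-b|$ together with H\"older with exponents $(3,2,6)$. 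The uniform $L^\infty(0,T;H^1)$ bounds on $\phi_\varepsilon$ (Lemma~\ref{lem:energy}) and $\Phi^\varepsilon$ (\eqref{eq:corrector-H1-bounds}) enter through the uniform embedding $H^1(\Omega_p^\varepsilon)\hookrightarrow L^6$ (Remark~\ref{rem:frame}). This gives the term bounded by $C\|e_\phi^\varepsilon\|_{L^2}\|\eta\|_{H^1}$. Hence pointwise in time
\[
\|e_\mu^\varepsilon\|_{(H^1)'}\le C\|e_\phi^\varepsilon\|_{H^1}+\|\mathcal R_2^\varepsilon\|_{(H^1)'}.
\]
Squaring and integrating, then invoking \eqref{eq:stability-rate} and \eqref{eq:residual-bound}, bounds this by $C(\|\widetilde e_\phi^\varepsilon(0)\|_{H^{-1}_\varepsilon}+|m_\varepsilon(0)|+\varepsilon^{1/2})$.

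Collecting the pieces with the triangle inequality yields \eqref{eq:uncorrected-bound-general}, and under \eqref{eq:RIC} this reduces to \eqref{eq:uncorrected-bound-prepared}. The main point requiring care is the $\varepsilon$-uniformity of the Sobolev constant in the $F'$ estimate. That uniformity is exactly what the uniform extension operator $P_\varepsilon$ provides, so no genuine obstacle is expected. Note that no gradient control on $e_\mu^\varepsilon$ is needed, because the statement only asks for the dual norm.
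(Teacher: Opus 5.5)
Your proposal is correct and follows the paper's proof essentially step for step. The paper uses the same splitting $\phi_\varepsilon-\phiz=e_\phi^\varepsilon+(\Phi^\varepsilon-\phiz)$ and $\mu_\varepsilon-\muz=e_\mu^\varepsilon+(M^\varepsilon-\muz)$, the $O(\varepsilon)$ corrector bounds from \eqref{eq:corrector-L2-approx}, and the same dual-norm bound on $e_\mu^\varepsilon$ read off from \eqref{eq:error-chem-weak}, via the growth bound on $F'$, the uniform $L^6$ embedding and Lemma~\ref{lem:residual}, before concluding with Theorem~\ref{thm:corrector-estimate}.
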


\begin{proof}
	We first estimate the phase variable. By the triangle inequality,
	\[
	\|\phi_\varepsilon-\phiz\|_{L^2(0,T;L^2(\Omega_p^\varepsilon))}
	\leq
	\|e_\phi^\varepsilon\|_{L^2(0,T;L^2(\Omega_p^\varepsilon))}
	+
	\|\Phi^\varepsilon-\phiz\|_{L^2(0,T;L^2(\Omega_p^\varepsilon))}.
	\]
	The first term is controlled by Theorem~\ref{thm:corrector-estimate}.
	For the second term, using the definition of \(\Phi^\varepsilon\) and
	the product estimate for the scale-splitting operator,
	\[
	\|\Phi^\varepsilon-\phiz\|_{L^2(0,T;L^2(\Omega_p^\varepsilon))}
	\leq
	C\varepsilon
	\|\phiz\|_{L^2(0,T;H^1(\Omega))}.
	\]
	Therefore,
	\[
	\|\phi_\varepsilon-\phiz\|_{L^2(0,T;L^2(\Omega_p^\varepsilon))}
	\leq
	C\Bigl(
	\|\widetilde e_\phi^\varepsilon(0)\|_{H^{-1}_\varepsilon}
	+
	|m_\varepsilon(0)|
	+
	\varepsilon^{1/2}
	\Bigr).
	\]
	
	We now estimate the chemical potential in the dual norm. By definition,
	\[
	\mu_\varepsilon-\muz
	=
	e_\mu^\varepsilon
	+
	M^\varepsilon-\muz .
	\]
	The corrector part satisfies
	\[
	\|M^\varepsilon-\muz\|_{L^2(0,T;L^2(\Omega_p^\varepsilon))}
	\leq
	C\varepsilon
	\|\muz\|_{L^2(0,T;H^1(\Omega))},
	\]
	and hence also
	\[
	\|M^\varepsilon-\muz\|_{L^2(0,T;H^1(\Omega_p^\varepsilon)')}
	\leq
	C\varepsilon
	\|\muz\|_{L^2(0,T;H^1(\Omega))}.
	\]
	
	It remains to control \(e_\mu^\varepsilon\) in
	\(L^2(0,T;H^1(\Omega_p^\varepsilon)')\). Let
	\(\eta\in H^1(\Omega_p^\varepsilon)\). From
	\eqref{eq:error-chem-weak},
	\[
	\int_{\Omega_p^\varepsilon}
	e_\mu^\varepsilon\eta\,dx
	=
	\int_{\Omega_p^\varepsilon}
	\nabla e_\phi^\varepsilon\cdot\nabla\eta\,dx
	+
	\int_{\Omega_p^\varepsilon}
	\bigl(F'(\phi_\varepsilon)-F'(\Phi^\varepsilon)\bigr)\eta\,dx
	-
	\bigl\langle
	\mathcal R_2^\varepsilon,\eta
	\bigr\rangle .
	\]
	The first term is bounded by
	\[
	\|\nabla e_\phi^\varepsilon\|_{L^2(\Omega_p^\varepsilon)}
	\|\eta\|_{H^1(\Omega_p^\varepsilon)}.
	\]
	For the nonlinear term, using the growth condition on \(F'\), the
	uniform \(H^1\)-bounds on \(\phi_\varepsilon\) and \(\Phi^\varepsilon\),
	and the embedding
	\(H^1(\Omega_p^\varepsilon)\hookrightarrow L^6(\Omega_p^\varepsilon)\)
	for \(d\leq3\), we get
	\[
	\left|
	\int_{\Omega_p^\varepsilon}
	\bigl(F'(\phi_\varepsilon)-F'(\Phi^\varepsilon)\bigr)\eta\,dx
	\right|
	\leq
	C
	\|e_\phi^\varepsilon\|_{L^2(\Omega_p^\varepsilon)}
	\|\eta\|_{H^1(\Omega_p^\varepsilon)}.
	\]
	Consequently,
	\[
	\|e_\mu^\varepsilon\|_{H^1(\Omega_p^\varepsilon)'}
	\leq
	C
	\|\nabla e_\phi^\varepsilon\|_{L^2(\Omega_p^\varepsilon)}
	+
	C
	\|e_\phi^\varepsilon\|_{L^2(\Omega_p^\varepsilon)}
	+
	\|\mathcal R_2^\varepsilon\|_{H^1(\Omega_p^\varepsilon)'} .
	\]
	Integrating in time and using Theorem~\ref{thm:corrector-estimate} and
	Lemma~\ref{lem:residual}, we obtain
	\[
	\|e_\mu^\varepsilon\|_{L^2(0,T;H^1(\Omega_p^\varepsilon)')}
	\leq
	C\Bigl(
	\|\widetilde e_\phi^\varepsilon(0)\|_{H^{-1}_\varepsilon}
	+
	|m_\varepsilon(0)|
	+
	\varepsilon^{1/2}
	\Bigr).
	\]
	Combining this estimate with the bound for \(M^\varepsilon-\muz\) gives
	\eqref{eq:uncorrected-bound-general}. The prepared-data conclusion
	\eqref{eq:uncorrected-bound-prepared} follows immediately.
\end{proof}

\begin{corollary}[Corrected gradient rate]
	\label{cor:corrected-gradient-rate}
	Under the assumptions of Theorem~\ref{thm:corrector-estimate}, one has
	\begin{equation}
		\label{eq:corrected-gradient-rate}
		\left\|
		\nabla\phi_\varepsilon
		-
		\left[
		\nabla\phiz
		+
		\sum_{i=1}^d
		\Qc_\varepsilon(\partial_{x_i}\phiz)
		\nabla_y\chi_i\!\left(\frac{x}{\varepsilon}\right)
		\right]
		\right\|_{L^2((0,T)\times\Omega_p^\varepsilon)}
		\leq
		C\Bigl(
		\|\widetilde e_\phi^\varepsilon(0)\|_{H^{-1}_\varepsilon}
		+
		|m_\varepsilon(0)|
		+
		\varepsilon^{1/2}
		\Bigr).
	\end{equation}
	In particular, if the initial mismatch satisfies \eqref{eq:RIC}, then
	\begin{equation}
		\label{eq:corrected-gradient-rate-prepared}
		\left\|
		\nabla\phi_\varepsilon
		-
		\left[
		\nabla\phiz
		+
		\sum_{i=1}^d
		\Qc_\varepsilon(\partial_{x_i}\phiz)
		\nabla_y\chi_i\!\left(\frac{x}{\varepsilon}\right)
		\right]
		\right\|_{L^2((0,T)\times\Omega_p^\varepsilon)}
		\leq
		C\varepsilon^{1/2}.
	\end{equation}
\end{corollary}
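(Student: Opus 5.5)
The plan is a triangle inequality: insert $\nabla\Phi^\varepsilon$ between $\nabla\phi_\varepsilon$ and the target, and bound each piece by a result already proved. We write
\[
\nabla\phi_\varepsilon-\Bigl[\nabla\phiz+\sum_{i=1}^d\Qc_\varepsilon(\partial_{x_i}\phiz)\nabla_y\chi_i\!\left(\tfrac{x}{\varepsilon}\right)\Bigr]
=\nabla e_\phi^\varepsilon
+\Bigl(\nabla\Phi^\varepsilon-\Bigl[\nabla\phiz+\sum_{i=1}^d\Qc_\varepsilon(\partial_{x_i}\phiz)\nabla_y\chi_i\!\left(\tfrac{x}{\varepsilon}\right)\Bigr]\Bigr),
\]
where $e_\phi^\varepsilon=\phi_\varepsilon-\Phi^\varepsilon$. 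We then estimate the two pieces in $L^2((0,T)\times\Omega_p^\varepsilon)$.

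The first piece satisfies
\[
\|\nabla e_\phi^\varepsilon\|_{L^2((0,T)\times\Omega_p^\varepsilon)}\le\|e_\phi^\varepsilon\|_{L^2(0,T;H^1(\Omega_p^\varepsilon))}.
\]
The right-hand side is bounded by \eqref{eq:stability-rate} of Theorem~\ref{thm:corrector-estimate}, namely by $C\bigl(\|\widetilde e_\phi^\varepsilon(0)\|_{H^{-1}_\varepsilon}+|m_\varepsilon(0)|+\varepsilon^{1/2}\bigr)$. No new work is needed here, because the theorem already absorbs the residual bound of Lemma~\ref{lem:residual}.

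The second piece is the term $\varepsilon\sum_i\nabla\Qc_\varepsilon(\partial_{x_i}\phiz)\chi_i(x/\varepsilon)$ that comes from the product rule \eqref{eq:grad-corrector}. By \eqref{eq:grad-Phi-split} of Lemma~\ref{lem:corrector-approx}, it is bounded by $C\varepsilon\|\phiz\|_{L^2(0,T;H^2(\Omega))}$. This quantity is finite by Assumption~\ref{ass:quant-reg}, since $L^\infty(0,T;H^2)\subset L^2(0,T;H^2)$ on a bounded time interval. Since $\varepsilon\le\varepsilon^{1/2}$ for $\varepsilon<1$, this piece is absorbed into the $\varepsilon^{1/2}$ term, which proves \eqref{eq:corrected-gradient-rate}.

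The prepared case \eqref{eq:corrected-gradient-rate-prepared} then follows by inserting \eqref{eq:RIC}. There is no real obstacle in this argument. The only point to check is that the norms match: Theorem~\ref{thm:corrector-estimate} controls the full $H^1$ norm of $e_\phi^\varepsilon$, which dominates its gradient, and Lemma~\ref{lem:corrector-approx} is stated on the same set $(0,T)\times\Omega_p^\varepsilon$ with the same extended correctors $\chi_i$. So both bounds apply directly to the decomposition without any further restriction or extension argument.
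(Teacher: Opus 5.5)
Your proposal is correct and is essentially the paper's own proof. You insert $\nabla\Phi^\varepsilon$, bound $\nabla e_\phi^\varepsilon$ by Theorem~\ref{thm:corrector-estimate}, and bound the leftover $\varepsilon\sum_i\nabla\Qc_\varepsilon(\partial_{x_i}\phi)\,\chi_i(x/\varepsilon)$ term by \eqref{eq:grad-Phi-split} at order $\varepsilon\le\varepsilon^{1/2}$; the triangle inequality then gives the estimate, and the prepared case follows from \eqref{eq:RIC}.
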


\begin{proof}
	By definition,
	\[
	e_\phi^\varepsilon=\phi_\varepsilon-\Phi^\varepsilon .
	\]
	Hence Theorem~\ref{thm:corrector-estimate} gives
	\[
	\|\nabla\phi_\varepsilon-\nabla\Phi^\varepsilon\|_
	{L^2((0,T)\times\Omega_p^\varepsilon)}
	\leq
	C\Bigl(
	\|\widetilde e_\phi^\varepsilon(0)\|_{H^{-1}_\varepsilon}
	+
	|m_\varepsilon(0)|
	+
	\varepsilon^{1/2}
	\Bigr).
	\]
	On the other hand, Lemma~\ref{lem:corrector-approx} yields
	\[
	\left\|
	\nabla\Phi^\varepsilon
	-
	\left[
	\nabla\phiz
	+
	\sum_{i=1}^d
	\Qc_\varepsilon(\partial_{x_i}\phiz)
	\nabla_y\chi_i\!\left(\frac{x}{\varepsilon}\right)
	\right]
	\right\|_{L^2((0,T)\times\Omega_p^\varepsilon)}
	\leq
	C\varepsilon
	\|\phiz\|_{L^2(0,T;H^2(\Omega))}.
	\]
	The triangle inequality gives \eqref{eq:corrected-gradient-rate}. The
	prepared-data estimate \eqref{eq:corrected-gradient-rate-prepared}
	follows from \eqref{eq:RIC}.
\end{proof}

\begin{remark}[Periodic domains and improved rates]
	\label{rem:periodic-improved}
	If \(\Omega=\mathbb T^d\) is the flat torus, there is no boundary layer
	of incomplete cells. In this case the variational consistency estimate
	in Lemma~\ref{lem:residual} improves from order \(\varepsilon^{1/2}\) to
	order \(\varepsilon\). Hence the stability estimate of
	Theorem~\ref{thm:corrector-estimate} becomes
	\[
	\|\widetilde e_\phi^\varepsilon\|_{L^\infty(0,T;H^{-1}_\varepsilon)}
	+
	\|e_\phi^\varepsilon\|_{L^2(0,T;H^1(\Omega_p^\varepsilon))}
	+
	\|m_\varepsilon\|_{L^\infty(0,T)}
	\leq
	C\Bigl(
	\|\widetilde e_\phi^\varepsilon(0)\|_{H^{-1}_\varepsilon}
	+
	|m_\varepsilon(0)|
	+
	\varepsilon
	\Bigr).
	\]
	In particular, using Lemma~\ref{lem:corrector-approx}, we also obtain
	the corrected gradient estimate
	\begin{equation}
		\left\|
		\nabla\phi_\varepsilon
		-
		\left[
		\nabla\phiz
		+
		\sum_{i=1}^d
		\Qc_\varepsilon(\partial_{x_i}\phiz)
		\nabla_y\chi_i\!\left(\frac{x}{\varepsilon}\right)
		\right]
		\right\|_{L^2((0,T)\times\Omega_p^\varepsilon)}
		\leq
		C\Bigl(
		\|\widetilde e_\phi^\varepsilon(0)\|_{H^{-1}_\varepsilon}
		+
		|m_\varepsilon(0)|
		+
		\varepsilon
		\Bigr).
		\label{eq:periodic-corrected-gradient}
	\end{equation}
	Accordingly, the uncorrected estimate becomes
	\[
	\|\phi_\varepsilon-\phiz\|_{L^2(0,T;L^2(\Omega_p^\varepsilon))}
	+
	\|\mu_\varepsilon-\muz\|_{L^2(0,T;H^1(\Omega_p^\varepsilon)')}
	\leq
	C\Bigl(
	\|\widetilde e_\phi^\varepsilon(0)\|_{H^{-1}_\varepsilon}
	+
	|m_\varepsilon(0)|
	+
	\varepsilon
	\Bigr).
	\]
	If the initial mismatch is \(O(\varepsilon)\) in the natural negative
	norm and in the mean, then the corrected gradient estimate
	\eqref{eq:periodic-corrected-gradient} and the uncorrected variational
	estimate above are both of order \(O(\varepsilon)\).
	
	We do not claim an uncorrected rate for
	\(\nabla\phi_\varepsilon-\nabla\phiz\). In general, the microscopic
	gradient contains the oscillating first-order term
	\[
	\sum_{i=1}^d
	\Qc_\varepsilon(\partial_{x_i}\phiz)
	\nabla_y\chi_i\!\left(\frac{x}{\varepsilon}\right),
	\]
	which is of order one in \(L^2((0,T)\times\Omega_p^\varepsilon)\).
	Thus the natural gradient estimate is the corrected one stated in
	\eqref{eq:periodic-corrected-gradient}.
\end{remark}

\begin{remark}[On the absence of a gradient rate for the chemical potential]
	\label{rem:no-mu-gradient}
	The estimates above do not control $\nabla\mu_\varepsilon$ at rate
	$\varepsilon^{1/2}$. The variational energy argument controls
	$e_\phi^\varepsilon = \phi_\varepsilon - \Phi^\varepsilon$ in
	$L^2(0,T;H^1(\Omega_p^\varepsilon))$, but controls
	$e_\mu^\varepsilon = \mu_\varepsilon - M^\varepsilon$ only in the dual
	space $L^2(0,T;H^1(\Omega_p^\varepsilon)')$. Obtaining
	$\|\nabla e_\mu^\varepsilon\|_{L^2}$ would require testing the
	phase-error equation with $e_\mu^\varepsilon$, which in turn requires
	testing the chemical-potential error equation with
	$\partial_t e_\phi^\varepsilon$. This is not justified at the regularity
	level $\mathcal{R}_2^\varepsilon \in L^2(0,T;H^1(\Omega_p^\varepsilon)')$.
	The dual estimate $\|\mu_\varepsilon - \muz\|_{L^2(0,T;H^1(\Omega_p^\varepsilon)')}$
	is therefore the natural conclusion of the present framework.
\end{remark}

\section{On the $\mathcal{O}(\varepsilon)$ rate and future directions}
	\label{rem:future-rate}
	The rate $\mathcal{O}(\varepsilon^{1/2})$ of Theorem~\ref{thm:corrector-estimate}
	reflects the boundary layer of incomplete cells near $\partial\Omega$ and is
	sharp within the present variational framework. On a bounded Lipschitz domain
	the expected rate is $\mathcal{O}(\varepsilon)$, obtainable in principle by two
	routes.
	
	The first is the duality argument of Kenig--Lin--Shen~\cite{KenigLinShen2012}
	and Gu~\cite{Gu2018}: pair $e^\varepsilon_\phi$ against the solution of a
	backward Cahn--Hilliard system on $\Omega^\varepsilon_p$ and extract an extra
	$\varepsilon^{1/2}$ via a boundary-strip estimate. The obstruction is that the
	adjoint system is nonlinear and $\varepsilon$-dependent, so the full corrector
	theory---including the analogue of Lemma~\ref{lem:residual} for the backward
	problem---must be repeated.
	
	The second is the operator-estimate approach of Birman--Suslina
	\cite{BirmanSuslina2004,BirmanSuslina2006,BirmanSuslina2007}: for linear
	operators this yields $\mathcal{O}(\varepsilon)$ in the $L^2$-operator norm
	without regularity assumptions on the coefficients; see
	Suslina~\cite{Suslina2013Neumann,Suslina2013Dirichlet}. Extending it to the
	nonlinear Cahn--Hilliard system requires treating $F'(\phi_\varepsilon)$ and
	$G(\phi_\varepsilon)$ as lower-order perturbations, with
	Theorem~\ref{thm:corrector-estimate} as the input bound. Both directions are
	left for future work.

\bibliographystyle{plainnat}
{ \bibliography{references}
}

\end{document}